\newcommand{\subjclass}[2][2020]{%
  \let\@oldtitle\@title%
  \gdef\@title{\@oldtitle\footnotetext{#1 \emph{Mathematics subject classification}: #2}}%
}
\newcommand{\keywords}[1]{%
  \let\@@oldtitle\@title%
  \gdef\@title{\@@oldtitle\footnotetext{\emph{Keywords}: #1}}%
}
\newtheorem{defn}{Definition}[section]
\newtheorem{thm}[defn]{Theorem}
\newtheorem{lem}[defn]{Lemma}
\newtheorem{prop}[defn]{Proposition}
\newtheorem{prop-defn}[defn]{Proposition-Definition}
\newtheorem{cor}[defn]{Corollary}
\newtheorem{ex}[defn]{Example}
\newtheorem{re}[defn]{Remark}
\newcommand{\bdefn}{\begin{defn}}
\newcommand{\edefn}{\end{defn}}
\newcommand{\bthm}{\begin{thm}}
\newcommand{\ethm}{\end{thm}}
\newcommand{\blem}{\begin{lem}}
\newcommand{\elem}{\end{lem}}
\newcommand{\bprop}{\begin{prop}}
\newcommand{\eprop}{\end{prop}}
\newcommand{\bcor}{\begin{cor}}
\newcommand{\ecor}{\end{cor}}
\newcommand{\beg}{\begin{eg}}
\newcommand{\eeg}{\end{eg}}
\newcommand{\bre}{\begin{re}}
\newcommand{\ere}{\end{re}}
\newcommand{\bpf}{\begin{proof}}
\newcommand{\epf}{\end{proof}}
\newcommand{\benu}{\begin{enumerate}}
\newcommand{\eenu}{\end{enumerate}}
\newcommand{\bc}{\begin{center}}
\newcommand{\ec}{\end{center}}
\newcommand{\bea}{\begin{eqnarray}}
\newcommand{\eea}{\end{eqnarray}}
\newcommand{\ba}{\begin{align*}}
\newcommand{\ea}{\end{align*}}
\newcommand{\Bea}{\begin{eqnarray*}}
\newcommand{\Eea}{\end{eqnarray*}}
\newcommand{\beq}{\begin{equation}}
\newcommand{\eeq}{\end{equation}}
\newcommand{\Beq}{\begin{equation*}}
\newcommand{\Eeq}{\end{equation*}}
\newcommand{\bspl}{\begin{split}}
\newcommand{\espl}{\end{split}}
\numberwithin{equation}{section}
\begin{document}
\date{}
\title{\bf  Homogeneous $\mathcal{O}$-operators on BiHom-Jordan superalgebras and BiHom-pre-Jordan superalgebras structures }
\author{ Sami Mabrouk and Othmen Ncib}
\author{\normalsize \bf  Othmen Ncib\small{$^{1}$} \footnote{  E-mail: othmenncib@yahoo.fr } }
\date{{\small{$^{1}$}   Faculty of Sciences, University of Gafsa,   BP
2100, Gafsa, Tunisia }}

\maketitle

\begin{center}
    \emph{To the memory of Salem OMRI}
    \vspace{0.5cm}
\end{center}
\begin{abstract}

\end{abstract}

In this paper, we recall the notion of BiHom- generalization of Jordan superalgebras called BiHom-Jordan superalgebras in which, we investigate their representation and associated dual representation theory. We also introduce the notion of homogeneous $\mathcal{O}$-operators of BiHom-Jordan superalgebra associated to a given representation. Additionnaly we define the BiHom-pre-Jordan superalgebras structures, in which we study the relationship between BiHom-pre-Jordan superalgebras and BiHom-Jordan superalgebras via a homogeneous $\mathcal{O}$-operators. Some other related results are considered.\\

{\bf Keywords:} BiHom-Jordan superalgebra, BiHom-pre-Jordan superalgebra, representation, $\mathcal{O}$-Operator.

\textbf{Mathematics Subject Classification}: 17A15, 17A60, 17B38, 17B60, 17D30, 17C10
\tableofcontents
\section{Introduction}
A Jordan superalgebra is a $\mathbb Z_2$-graded vector space $\mathcal{J}$ equipped with an even super-commutative bilinear map (i.e. $x\cdot y =(-1)^{|x||y|} y \cdot x$) that satisfies the Jordan super-identity:
\begin{align*}
  \sum_{x,y,u}(-1)^{|t|(|x|+|z|)}as(x\cdot y,z,t)=0.
\end{align*}
where $x,y,z,t\in \mathcal{H}(\mathcal{J})$ and $\displaystyle\sum_{x,y,t}$ denoted the cyclic sum over $(x,y,z)$ and $as(x,y,z)=(x\cdot y)\cdot z-x\cdot(y\cdot z)$ for any $x,y,z\in \mathcal{J}$. The reader is referred to \cite{Cantarini-Kac} and  \cite{shest-jordan-super} for discussions about the important role of Jordan superalgebras in physics, especially in quantum
mechanics. For interested readers, please refer also to the literature \cite{Shest-Gonzalez,Martinez-Zelmanov}. These algebraic structures were first studied in \cite{VG} by classifying finite-dimensional simple
Jordan superalgebras over an algebraically closed field of characteristic zeros and have been rapidly developed \cite{VG,RZ}. In \cite{VG}, super Jordan Yang-Baxter Equation (super JYBE) in Jordan superalgebras and super $\mathcal{O}$-operators are introduced.

In the past few years, many scholars have conducted indepth research on twisted algebras (also known as Hom-algebras or Bihom-algebras) excited by $q$-deformation
of vector field algebras. We can find more related results in \cite{Gohr,G-M-M-P,G-Z-W,L-C-M,Makhlouf1,Wang-Guo,Chtioui1,M-N-S}. We noticed recently that Hom-Jordan superalgebras and BiHom-Jordan superalgebras were studied in \cite{KFA,Hou-Chen}. Some other results studied on Hom-Jordan (super)-algebras and BiHom-Jordan (super)-algebras can be found in \cite{Chtioui2,DNC,M-N-S,wang,Yau}.\\

Representations theory of different algebraic structures is an important subject of study in algebra and diverse areas. They appear in many fields of mathematics and physics. In particular, they appear in deformation and cohomology theory among other areas. The notion of representation introduced for Hom–Lie algebras in\cite{Y-Sheng1} (see also\cite{Benayadi-Makhlouf}), it's also introduced for Jordan superalgebras and Hom-Jordan superalgebras in \cite{M-N-S,RZ}.\\

In $1960$, G. Baxter \cite{Baxter} first introduced the notion
of Rota-Baxter operators for associative algebras. The Rota-Baxter operators have several
applications in probability \cite{Baxter}, combinatorics \cite{Cartier,Guo-Keigher,G-C-Rota}, and quantum field theory \cite{Connes-Kreimer}. In the
$1980s$, the notion of Rota-Baxter operator of weight zero was introduced in terms of the classical
Yang-Baxter equation for Lie algebras (see \cite{Guo} for more details). Later on, B. A. Kupershmidt
\cite{B-Kupershmidt} defined the notion of $\mathcal{O}$-operators as generalized Rota-Baxter operators to understand classical Yang-Baxter equations and related integrable systems. Several results associated to this notion are studied for several algebraic structures ( see \cite{Chtioui2,DNC,Harrathi,M-N-S,wang} for more details). In the super case, a super $\mathcal{O}$-operator on a superalgebra $(\mathfrak g,\cdot)$ with respect to a representation of $\mathfrak g$ on a $\mathbb Z_2$-graded vector space $V$ is defined as an even linear map $T:V\to \mathfrak g$ satisfying certain condition. In \cite{Bai-Guo-Zhang}, the authors introduced the notion of homogeneous $\mathcal{O}$-operators on Lie superalgebras in which they gave some results associated to the even and odd $\mathcal{O}$-operators. In our paper, we extend this notion to the BiHom-Jordan superalgebras in which we introduce the notion of homogeneous $\mathcal{O}$-operators on BiHom-Jordan superalgebras with respect to a given representation and  we study the relationship between BiHom-pre-Jordan super-algebra and BiHom-Jordan superalgebra via a homogeneous $\mathcal{O}$-operators.  \\

\textbf{Layout of the paper}. Here is an outline of the paper.\\
In Section \ref{Sec2}, we recall relevant definitions
and properties of BiHom-Jordan superalgebras in which we introduce the representation theory of these algebraic structures and obtain some related results. In Section \ref{Sec3}, we study the notion of dual representation of a given representation of BiHom-Jordan superalgebra without any condition and we give some related results. Section \ref{Sec4} discusses the notions of BiHom-pre-Jordan superalgebras and the homogeneous $\mathcal{O}$-operators on BiHom-Jordan superalgebras associated with a given representation in which we study the relationship between BiHom-pre-Jordan superalgebras and BiHom-Jordan superalgebras via a homogeneous $\mathcal{O}$-operator. We also give some results concerning the notions of the parity reverse of a representation and $\mathcal{O}$-operators duality introduced in \cite{Bai-Guo-Zhang}, in which we obtain
a natural one-one correspondence between $\mathcal{O}$-operators with any given parity associated to a
representation of a BiHom-Jordan superalgebra and $\mathcal{O}$-operators with the opposite parity associated to the parity reverse representation which is called $\mathcal{O}$-operators duality. 

Recall that, in general a superalgebra means a $\mathbb{Z}_2$-graded algebra, that is an algebra $\mathcal{A}$ which may be written as a direct sum of subspaces $\mathcal{A}=\mathcal{A}_{0}\oplus\mathcal{A}_{1}$ subject to the relation $\mathcal{A}_{i}\mathcal{A}_{j}\subseteq \mathcal{A}_{i+j}$.
The subspaces $\mathcal{A}_{0}$ and $\mathcal{A}_{1}$ are called the even and the odd parts of the superalgebra $\mathcal{A}$. Define the $\mathbb{Z}_2$-graded vector space $\mathcal{A}^{op}=\mathcal{A}_{1}\oplus\mathcal{A}_{0}$.
Throughout this paper, $\mathbb{K}$ denotes an algebraically closed field of characteristic 0. All algebras and vector spaces are considered on $\mathbb{K}$. The parity of the homogeneous element $x$ is denoted by $|x|$.
 \section{Preliminaries and Basics on BiHom-Jordan superalgebras}\label{Sec2}

 In this section, we recall the notion of BiHom-Jordan superalgebras introduced in \cite{Hou-Chen}, and we introduce the associated BiHom-super-bimodules and represntations for this we develop some helpful results that we will use later.
 \begin{defn}\cite{Hou-Chen}
  A BiHom-Jordan superalgebra is a quadruple $(\mathfrak J,\cdot,\alpha,\beta)$ consisting of a $\mathbb Z_2$-graded vector space $\mathfrak J$, an even bilinear map $\cdot:\mathfrak J\times\mathfrak J\to\mathfrak J$ and two even commuting linear maps $\alpha,\beta:\mathfrak J\to\mathfrak J$ such that for any $x,y,z,t\in\mathcal{H}(\mathfrak J)$, the following conditions hold
 \begin{eqnarray}
&&\beta(x)\cdot\alpha(y)=(-1)^{|x||y|}\beta(y)\cdot\alpha(x),\;~~~~~~(\text{BiHom-super-symmetry condition})\label{J-BiH-skewsym-cond}\\
&&\displaystyle\circlearrowleft_{x,y,t}(-1)^{|t|(|x|+|z|)}ass_{\alpha,\beta}(\beta^2(x)\cdot\alpha\beta(y),\alpha^2\beta(z),\alpha^3(t))=0,\;(\text{Bihom-Jordan super-identity})\label{BiH-Jor-identity}
 \end{eqnarray} 
 where $\displaystyle\circlearrowleft_{x,y,t}$ denotes the cyclic sum over $(x,y,z)$ and $ass_{\alpha,\beta}(x,y,z)=(x\cdot y)\cdot\beta(z)-\alpha(x)\cdot(y\cdot z)$. A BiHom-Jordan superalgebra is called multiplicative if $\alpha$ and $\beta$ are a superalgebras morphisms (i.e. for any $x,y\in\mathfrak{J}$, we have $\alpha(x\cdot y)=\alpha(x)\cdot\alpha(y)$ and $\beta(x\cdot y)=\beta(x)\cdot\beta(y)$ ) and called regular if $\alpha$ and $\beta$ are a superalgebra automorphisms.
 \end{defn}
 \begin{re}\
 \begin{enumerate}
\item If $\alpha=\beta=Id_{\mathfrak{J}}$, we recover the Jordan superalgebras structures (\cite{Cantarini-Kac}).
\item If $\alpha=\beta$ and $\alpha$ is surjective, we recover the Hom-Jordan superalgebras structures (\cite{KFA}). 
 \end{enumerate}    
 \end{re}
 
 \begin{thm}\cite{Hou-Chen}
Let $(\mathfrak J,\cdot)$ be a Jordan superalgebra and $\alpha,\beta$  be two even commuting superalgebra endomorphisms of $\mathfrak J$. Then, $\mathfrak J_{\alpha,\beta}=(\mathfrak J,\cdot_{\alpha,\beta},\alpha,\beta)$ where $\cdot_{\alpha,\beta}$ is defined by 
$$x\cdot_{\alpha,\beta}y=\alpha(x)\cdot\beta(y),\;\forall x,y\in\mathfrak J,$$
is a BiHom-Jordan superalgebra.
 \end{thm}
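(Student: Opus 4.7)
My plan is to verify the two defining axioms of a BiHom-Jordan superalgebra for $(\mathfrak J,\cdot_{\alpha,\beta},\alpha,\beta)$ by pushing every computation back to the underlying Jordan superalgebra $(\mathfrak J,\cdot)$, where super-commutativity and the classical Jordan super-identity are available. The guiding principle is that, since $\alpha,\beta$ are commuting even endomorphisms, any monomial $\alpha^i\beta^j$ is again an even endomorphism; this lets me absorb the twists into the arguments of the ordinary product and associator.

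For the BiHom-super-symmetry condition \eqref{J-BiH-skewsym-cond}, the computation is essentially formal:
\[
\beta(x)\cdot_{\alpha,\beta}\alpha(y)=\alpha\beta(x)\cdot\beta\alpha(y)=\alpha\beta(x)\cdot\alpha\beta(y),
\]
after which super-commutativity of $\cdot$ together with $|\alpha\beta(x)|=|x|$ (and similarly for $y$) gives $(-1)^{|x||y|}\alpha\beta(y)\cdot\alpha\beta(x)=(-1)^{|x||y|}\beta(y)\cdot_{\alpha,\beta}\alpha(x)$. The main work is condition \eqref{BiH-Jor-identity}, and my first subgoal is a twisted-associator identity:
\[
ass_{\alpha,\beta}(a,b,c)=as\bigl(\alpha^{2}(a),\alpha\beta(b),\beta^{2}(c)\bigr),
\]
where $as(u,v,w)=(u\cdot v)\cdot w-u\cdot(v\cdot w)$ is the usual Jordan associator of $\mathfrak J$. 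I obtain this by expanding $(a\cdot_{\alpha,\beta}b)\cdot_{\alpha,\beta}\beta(c)$ and $\alpha(a)\cdot_{\alpha,\beta}(b\cdot_{\alpha,\beta}c)$ via the definition of $\cdot_{\alpha,\beta}$, using the endomorphism property of $\alpha$ and $\beta$ to distribute the outer $\alpha$ (resp.\ $\beta$) across the inner product, and finally invoking $\alpha\beta=\beta\alpha$ to match the two middle arguments.

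With this lemma established, I substitute $a=\beta^{2}(x)\cdot_{\alpha,\beta}\alpha\beta(y)$, $b=\alpha^{2}\beta(z)$, $c=\alpha^{3}(t)$ into the identity. Unpacking $a$ gives $a=\alpha\beta^{2}(x)\cdot\alpha\beta^{2}(y)$ (using $\beta\alpha\beta=\alpha\beta^{2}$), and then $\alpha^{2}(a)=\alpha^{3}\beta^{2}(x)\cdot\alpha^{3}\beta^{2}(y)$ because $\alpha^{2}$ is an endomorphism of $\cdot$. Similarly $\alpha\beta(b)=\alpha^{3}\beta^{2}(z)$ and $\beta^{2}(c)=\alpha^{3}\beta^{2}(t)$. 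Setting $X=\alpha^{3}\beta^{2}(x)$, $Y=\alpha^{3}\beta^{2}(y)$, $Z=\alpha^{3}\beta^{2}(z)$, $T=\alpha^{3}\beta^{2}(t)$, which preserve parities because $\alpha,\beta$ are even, reduces the left-hand side of \eqref{BiH-Jor-identity} to
\[
\sum_{\mathrm{cyc}(X,Y,Z)}(-1)^{|T|(|X|+|Z|)}\,as(X\cdot Y,Z,T),
\]
and this vanishes by the ordinary Jordan super-identity for $(\mathfrak J,\cdot)$ applied to the homogeneous elements $X,Y,Z,T\in\mathfrak J$.

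I do not expect any genuinely hard step; the argument is a standard Yau-style twist. The only real obstacle is bookkeeping: I must keep careful track of the exponents of $\alpha$ and $\beta$ in each slot and repeatedly use the commutation $\alpha\beta=\beta\alpha$ together with the endomorphism property to rewrite products of twisted elements as a single twist of a product. No structural ingredient beyond super-commutativity, the endomorphism hypotheses on $\alpha$ and $\beta$, and the original Jordan super-identity is needed.
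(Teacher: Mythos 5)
Your proof is correct. The paper itself gives no proof of this theorem (it is quoted from Hou--Chen), but your argument is exactly the standard one, and it mirrors the technique the paper does use for the converse statement immediately afterwards (untwisting a regular multiplicative BiHom-Jordan superalgebra via the substitution $X=\alpha^{-3}\beta^{-2}(x)$, etc.): your key identity $ass_{\alpha,\beta}(a,b,c)=as(\alpha^{2}(a),\alpha\beta(b),\beta^{2}(c))$ and the substitution $X=\alpha^{3}\beta^{2}(x),\dots$ check out, and evenness of $\alpha,\beta$ guarantees the signs are unchanged. The only cosmetic slip is writing the reduced cyclic sum over $(X,Y,Z)$ where it should run over $(X,Y,T)$ with $Z$ fixed (the paper's own notation is equally inconsistent); this does not affect the argument.
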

 \begin{prop}
Let $(\mathfrak J,\cdot,\alpha,\beta)$ be a multiplicative regular BiHom-Jordan superalgebra. Then $(\mathfrak J,\cdot')$, where $\cdot':\mathfrak J\to\mathfrak J$ is defined by
$$x\cdot' y=\alpha^{-1}(x)\cdot\beta^{-1}(y),\;\forall x,y\in\mathfrak J,$$
is a Jordan superalgebra.
 \end{prop}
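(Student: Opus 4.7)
The plan is to verify the two defining properties of a Jordan superalgebra for $\cdot'$ by pulling them back to the corresponding BiHom-identities via substitutions that absorb the inverses of $\alpha$ and $\beta$. Throughout, the key technical ingredients are that $\alpha,\beta$ commute, are bijective, and are multiplicative (so $\alpha^{-1}, \beta^{-1}$ distribute over $\cdot$).

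\textbf{Step 1: Super-commutativity of $\cdot'$.} Setting $a = \alpha^{-1}\beta^{-1}(x)$ and $b = \alpha^{-1}\beta^{-1}(y)$ in the BiHom-super-symmetry condition \eqref{J-BiH-skewsym-cond} gives $\beta(a) = \alpha^{-1}(x)$ and $\alpha(b) = \beta^{-1}(y)$, whence
\[
\alpha^{-1}(x)\cdot\beta^{-1}(y) \;=\; \beta(a)\cdot\alpha(b) \;=\; (-1)^{|x||y|}\,\beta(b)\cdot\alpha(a) \;=\; (-1)^{|x||y|}\alpha^{-1}(y)\cdot\beta^{-1}(x),
\]
i.e.\ $x\cdot' y = (-1)^{|x||y|}\,y\cdot' x$.

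\textbf{Step 2: Rewrite the $\cdot'$-associator.} Expanding $(u\cdot' v)\cdot' w$ and $u\cdot'(v\cdot' w)$ using the definition of $\cdot'$ and multiplicativity of $\alpha,\beta$, I would verify
\[
(u\cdot' v)\cdot' w - u\cdot'(v\cdot' w) \;=\; ass_{\alpha,\beta}\bigl(\alpha^{-2}(u),\,\alpha^{-1}\beta^{-1}(v),\,\beta^{-2}(w)\bigr),
\]
since $\bigl(\alpha^{-2}(u)\cdot \alpha^{-1}\beta^{-1}(v)\bigr)\cdot\beta^{-1}(w)$ matches $(u\cdot' v)\cdot' w$ and $\alpha^{-1}(u)\cdot\bigl(\alpha^{-1}\beta^{-1}(v)\cdot\beta^{-2}(w)\bigr)$ matches $u\cdot'(v\cdot' w)$.

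\textbf{Step 3: Substitute into the BiHom-Jordan super-identity.} Taking $u = x\cdot' y = \alpha^{-1}(x)\cdot\beta^{-1}(y)$ and using multiplicativity, one gets $\alpha^{-2}(u) = \alpha^{-3}(x)\cdot\alpha^{-2}\beta^{-1}(y)$, which equals $\beta^{2}(X)\cdot\alpha\beta(Y)$ for $X = \alpha^{-3}\beta^{-2}(x)$ and $Y = \alpha^{-3}\beta^{-2}(y)$. Likewise $\alpha^{-1}\beta^{-1}(z) = \alpha^{2}\beta(Z)$ with $Z = \alpha^{-3}\beta^{-2}(z)$, and $\beta^{-2}(t) = \alpha^{3}(T)$ with $T = \alpha^{-3}\beta^{-2}(t)$. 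Applying \eqref{BiH-Jor-identity} with inputs $(X,Y,Z,T)$ produces exactly the cyclic sum $\circlearrowleft_{x,y,t}(-1)^{|t|(|x|+|z|)}as'(x\cdot' y,z,t) = 0$ required for the Jordan super-identity of $(\mathfrak J,\cdot')$.

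\textbf{Main obstacle.} The substantive content is essentially contained in the BiHom-identities; the genuine difficulty is purely bookkeeping, namely tracking the exponents of $\alpha^{-1}$ and $\beta^{-1}$ so that each twisted argument of $ass_{\alpha,\beta}$ can be rewritten in the form $\beta^{2}(\cdot)\cdot\alpha\beta(\cdot)$, $\alpha^{2}\beta(\cdot)$, or $\alpha^{3}(\cdot)$ demanded by the identity \eqref{BiH-Jor-identity}. The hypotheses that $\alpha,\beta$ commute, are bijective, and are multiplicative are used simultaneously at precisely this step; regularity without multiplicativity would be insufficient because we need $\alpha^{-1},\beta^{-1}$ to pass through $\cdot$.
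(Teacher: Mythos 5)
Your proposal is correct and follows essentially the same route as the paper: BiHom-super-symmetry applied to $\alpha^{-1}\beta^{-1}(x)$, $\alpha^{-1}\beta^{-1}(y)$ for the symmetry of $\cdot'$, and the substitution $X=\alpha^{-3}\beta^{-2}(x)$ (and likewise for $y,z,t$) to identify the $\cdot'$-associator with $ass_{\alpha,\beta}\bigl(\beta^{2}(X)\cdot\alpha\beta(Y),\alpha^{2}\beta(Z),\alpha^{3}(T)\bigr)$ so that the cyclic sum vanishes by \eqref{BiH-Jor-identity}. If anything, your version is slightly cleaner in isolating the general associator identity of Step 2 and in correctly passing to the cyclic sum rather than asserting that a single associator vanishes.
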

\begin{proof}
 Let $x,y\in\mathcal{H}(\mathfrak J)$, then by using the BiHom-symmetry of $"\cdot"$, we have
 \begin{align*}
x\cdot'y=\alpha^{-1}(x)\cdot\beta^{-1}(y)&=\beta(\alpha^{-1}\beta^{-1}(x))\cdot\alpha(\alpha^{-1}\beta^{-1}(y))\\&=(-1)^{|x||y|}\beta(\alpha^{-1}\beta^{-1}(y))\cdot\alpha(\alpha^{-1}\beta^{-1}(x))\\&=(-1)^{|x||y|}\alpha^{-1}y\cdot \beta^{-1}(x)\\&=(-1)^{|x||y|}y\cdot'x.
 \end{align*}
 Then $"\cdot'"$ is super-symmetric.\\
 Let $x,y,z,t\in\mathfrak J$, then by using the fact that, $(\mathfrak J,\cdot,\alpha,\beta)$ is a BiHom-Jordan superalgebra, we have 
 \begin{align*}
 ass(x\cdot'y,z,t)&=((x\cdot'y)\cdot'z)\cdot't-(x\cdot'y)\cdot'(z\cdot't)\\&=\alpha^{-1}\big((x\cdot'y)\cdot'z\big)\cdot\beta^{-1}(t)-\alpha^{-1}\big(\alpha^{-1}(x)\cdot\beta^{-1}(y)\big)\cdot\beta^{-1}\big(\alpha^{-1}(z)\cdot\beta^{-1}(t)\big)\\&=\big((\alpha^{-3}(x)\cdot\alpha^{-2}\beta^{-1}(y))\cdot\alpha^{-1}\beta^{-1}(z)\big)\cdot\beta^{-1}(t)-\big(\alpha^{-2}(x)\cdot\alpha^{-1}\beta^{-1}(y)\big)\cdot\big(\alpha^{-1}\beta^{-1}(z)\cdot\beta^{-2}(t)\big).    
 \end{align*}
 We set $X=\alpha^{-3}\beta^{-2}(x),\;Y=\alpha^{-3}\beta^{-2}(y),\;Z=\alpha^{-3}\beta^{-2}(z)$ and $T=\alpha^{-3}\beta^{-2}(t)$, then we have 
 \begin{align*}
ass(x\cdot'y,z,t)&=\big((\beta^2(X)\cdot\alpha\beta(Y))\cdot\alpha^2\beta(Z)\big)\alpha^2\beta(T)-\big(\alpha\beta^2(x)\cdot\alpha^2\beta(Y)\big)\cdot\big(\alpha^2\beta(Z)\cdot\alpha^3(T)\big)\\&=ass_{\alpha,\beta}\big(\beta^2(X)\cdot\alpha\beta(Y),\alpha^2\beta(Z),\alpha^3(T)\big)\\&=0,
 \end{align*}
then, the Jordan identity is satisfied by $\cdot'$ which gives the result. 
\end{proof}

More generally, we have the following result 
\begin{prop}
Let $(\mathfrak J,\cdot,\alpha,\beta)$ be a BiHom-Jordan superalgebra and. Then $(\mathfrak J,\cdot')$, where $\cdot':\mathfrak J\to\mathfrak J$ is defined by
$$x\cdot' y=\alpha^{s-1}\beta^t(x)\cdot\alpha^s\beta^{t-1}(y),\;s,t\in\mathbb Z,$$
is a Jordan superalgebra.    
\end{prop}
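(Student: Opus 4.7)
Because $\cdot'$ involves negative powers of $\alpha$ and $\beta$, the natural reading is that $(\mathfrak J,\cdot,\alpha,\beta)$ is multiplicative and regular; the strategy is then to mirror the proof of the previous proposition, which is the special case $s=t=0$. Introduce the operators $F=\alpha^{s-1}\beta^{t}$ and $G=\alpha^{s}\beta^{t-1}$, so that $x\cdot' y=F(x)\cdot G(y)$; note that $F,G$ commute with each other and with $\alpha,\beta$, and are superalgebra automorphisms of $(\mathfrak J,\cdot)$.

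For super-symmetry, put $a=\alpha^{s-1}\beta^{t-1}(x)$ and $b=\alpha^{s-1}\beta^{t-1}(y)$, so that $F(x)=\beta(a)$ and $G(y)=\alpha(b)$. The BiHom-super-symmetry \eqref{J-BiH-skewsym-cond} then yields
\[x\cdot'y=\beta(a)\cdot\alpha(b)=(-1)^{|x||y|}\beta(b)\cdot\alpha(a)=(-1)^{|x||y|}F(y)\cdot G(x)=(-1)^{|x||y|}y\cdot'x.\]

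For the Jordan super-identity, expand $as_{\cdot'}(x\cdot'y,z,w)=((x\cdot'y)\cdot'z)\cdot'w-(x\cdot'y)\cdot'(z\cdot'w)$ by repeated use of the definition of $\cdot'$, and push $F,G$ past $\cdot$ using multiplicativity, writing every factor in terms of compositions $F^{i}G^{j}$ applied to $x,y,z,w$. The goal is then to exhibit substitutions $X=\gamma(x)$, $Y=\gamma(y)$, $Z=\gamma'(z)$, $W=\gamma(w)$ with a common twist $\gamma=\alpha^{p}\beta^{q}$ on the three cycled variables, such that the resulting difference coincides with $ass_{\alpha,\beta}(\beta^{2}(X)\cdot\alpha\beta(Y),\alpha^{2}\beta(Z),\alpha^{3}(W))$. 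Once such a uniform substitution is in place, the required cyclic sum over $(x,y,w)$ carrying the signs $(-1)^{|w|(|x|+|z|)}$ translates directly into the cyclic sum in the BiHom-Jordan super-identity \eqref{BiH-Jor-identity} for $(X,Y,Z,W)$, which vanishes.

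The main technical obstacle is the bookkeeping: one must pin down integers $p,q,p',q'$ depending on $s,t$ so that both summands of the expansion of $as_{\cdot'}$ match the two summands of $ass_{\alpha,\beta}$ at the same time. In the base case $s=t=0$ the single choice $\gamma=\gamma'=\alpha^{-3}\beta^{-2}$ does this uniformly; for general $s,t$ one must shift these exponents by suitable multiples of $(s,t)$, and, if a direct term-by-term matching fails, use BiHom-super-symmetry \eqref{J-BiH-skewsym-cond} on internal factors to realign the inner arguments before invoking \eqref{BiH-Jor-identity} on the substituted variables.
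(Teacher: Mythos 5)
Your handling of the BiHom-super-symmetry is correct, but the decisive step --- the Jordan super-identity --- is left as a hope (``exhibit substitutions \dots such that the resulting difference coincides with $ass_{\alpha,\beta}$''), and that hope cannot be realized; note also that the paper itself supplies no proof of this proposition, which is merely asserted as a generalization of the case $s=t=0$. With $F=\alpha^{s-1}\beta^{t}$ and $G=\alpha^{s}\beta^{t-1}$ one computes, using multiplicativity,
\begin{align*}
as_{\cdot'}(x\cdot'y,z,w)=\bigl(\bigl(F^{3}(x)\cdot F^{2}G(y)\bigr)\cdot FG(z)\bigr)\cdot G(w)-\bigl(F^{2}(x)\cdot FG(y)\bigr)\cdot\bigl(FG(z)\cdot G^{2}(w)\bigr),
\end{align*}
whereas the two summands of $ass_{\alpha,\beta}\bigl(\beta^{2}(X)\cdot\alpha\beta(Y),\alpha^{2}\beta(Z),\alpha^{3}(W)\bigr)$ carry $\beta^{2}(X)$ and $\alpha\beta^{2}(X)$ in the innermost slot, and $\alpha^{3}\beta(W)$ and $\alpha^{3}(W)$ in the outermost one. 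A single substitution $X=\gamma(x)$, $W=\gamma''(w)$ matching both summands therefore forces $F^{3}=\alpha^{-1}F^{2}$ and $\beta^{-1}G=G^{2}$, i.e.\ $F=\alpha^{-1}$ and $G=\beta^{-1}$, which is exactly $s=t=0$ --- the case already proved. Realigning with the super-symmetry $u\cdot v=\pm\,\beta\alpha^{-1}(v)\cdot\alpha\beta^{-1}(u)$ cannot repair this, since it only redistributes the twist $(\beta\alpha^{-1})(\alpha\beta^{-1})=\mathrm{id}$ between the two slots and leaves the mismatch $F\neq\alpha^{-1}$ intact.

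In fact the statement is false for $(s,t)\neq(0,0)$, so no bookkeeping can close the gap. Take $\mathfrak J=\mathbb{K}e_{1}\oplus\mathbb{K}e_{2}$ purely even with the Jordan product $\mu(e_{i},e_{j})=\delta_{ij}e_{i}$, let $\alpha$ be the swap of $e_{1}$ and $e_{2}$, $\beta=\mathrm{id}$, and set $x\cdot y=\mu(\alpha(x),\beta(y))$; by the Yau-twist theorem quoted in the paper this is a multiplicative regular BiHom-Jordan superalgebra. For $s=1$, $t=0$ the new product is $x\cdot'y=x\cdot\alpha(y)=\alpha(\mu(x,y))$, so $e_{1}\cdot'e_{1}=e_{2}$, $e_{2}\cdot'e_{2}=e_{1}$, $e_{1}\cdot'e_{2}=e_{2}\cdot'e_{1}=0$, and taking all four arguments equal to $e_{1}$ the cyclic Jordan sum equals $3\,as(e_{2},e_{1},e_{1})=3\bigl((e_{2}\cdot'e_{1})\cdot'e_{1}-e_{2}\cdot'(e_{1}\cdot'e_{1})\bigr)=-3e_{1}\neq0$. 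So the obstacle you flag as ``the main technical obstacle'' is not mere bookkeeping: the proposition needs additional hypotheses (or must be weakened, e.g.\ to assert only a Hom- or BiHom-type structure on $(\mathfrak J,\cdot')$), and any proof along the lines you propose必然 stops at the matching step. (Please also record explicitly the multiplicativity and regularity assumptions you correctly identify as necessary to make sense of negative powers.)
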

\begin{defn}
Let $(\mathfrak J,\cdot,\alpha,\beta)$ be a BiHom-Jordan superalgebra and $V$ be a $\mathbb Z_2$-graded vectore space. Let $\mathfrak l,\mathfrak r:\mathfrak J\to gl(V)$ be two even linear maps and $\alpha_V,\beta_V$ two even commuting endomorphisms of $V$. The tuple $(V,\mathfrak l,\mathfrak r,\alpha_V,\beta_V)$ is called  BiHom-super-bimodule of $\mathfrak J$, if for any $x,y,z\in\mathcal{H}(\mathfrak J)$ and $u\in\mathcal{H}(V)$, the following condition hold:
\begin{eqnarray}
&&\mathfrak l(\beta(x))\alpha_V(u)=\mathfrak r(\alpha(x))\beta_V(u),\label{con-bimod-J-1}\\
&&\mathfrak l\big((\beta^2(x)\cdot\alpha\beta(y))\cdot\alpha^2\beta(z)\big)\alpha_V^3\beta_V-\mathfrak l\big(\alpha\beta^2(x)\cdot\alpha^2\beta(y)\big)\mathfrak l(\alpha^2\beta(z))\alpha_V^3\label{con-bimod-J-4}\\
&&\quad=(-1)^{|z|(|x|+|y|)}\mathfrak r(\alpha^2\beta(z)\cdot\alpha^3(x))\alpha_V\mathfrak l(\beta^2(y))\alpha_V\beta_V-(-1)^{|y||z|}\mathfrak r(\alpha^3\beta(x))\mathfrak r(\alpha^2\beta(z))\mathfrak l(\beta^2(y))\alpha_V\beta_V\nonumber\\
&&\quad+(-1)^{|z|(|x|+|y|)}\mathfrak r(\alpha^2\beta(z)\cdot\alpha^3(y))\alpha_V\mathfrak r(\alpha\beta(x))\beta_V^2-(-1)^{|x|(|y|+|z|)}\mathfrak r(\alpha^3\beta(y))\mathfrak r(\alpha^2\beta(z))\mathfrak r(\alpha\beta(x))\beta_V^2,\nonumber\\
&&\mathfrak r(\alpha^3\beta(x))\mathfrak l(\beta^2(y)\cdot\alpha\beta(z))\alpha_V^2\beta_V-\mathfrak l(\alpha\beta^2(y)\cdot\alpha^2\beta(z))\mathfrak r(\alpha^3(x))\alpha_V^2\beta_V\label{con-bimod-J-5}\\
&&\quad=(-1)^{|z|(|x|+|y|)}\mathfrak l\big(\alpha(\beta^2(z)\cdot\alpha\beta(x))\big)\mathfrak r(\alpha^3(y))\alpha_V^2\beta_V-(-1)^{|x|(|y|+|z|)}\mathfrak r(\alpha^3\beta(y))\mathfrak l(\beta^2(z)\cdot\alpha\beta(x))\alpha_V^2\beta_V\nonumber\\
&&\quad+\mathfrak l\big(\alpha(\beta^2(x)\cdot\alpha\beta(y))\big)r(\alpha^3(z))\alpha_V^2\beta_V-(-1)^{|z|(|x|+|y|)}\mathfrak r(\alpha^3\beta(z))\mathfrak l(\beta^2(x)\cdot\alpha\beta(y))\alpha_V^2\beta_V.\nonumber
\end{eqnarray}
\end{defn}

\begin{thm}\label{pr-semi-direct-BiH-Jord-Bim}
Let $(\mathfrak J,\cdot,\alpha,\beta)$ be a BiHom-Jordan superalgebra and $V$ be a $\mathbb Z_2$-graded vector space. Let $\alpha_V,\beta_V:\mathfrak J\to\mathfrak J$ be two even commuting linear maps and $\mathfrak l,\mathfrak r:\mathfrak J\to \mathfrak{gl}(V)$ be two even linear maps. Then, $(V,\mathfrak l,\mathfrak r,\alpha_V,\beta_V)$ is a BiHom-super-bimodule of $\mathfrak J$ if and only if $(\mathfrak J\oplus V,\cdot_{\mathfrak J\oplus V},\alpha\oplus\alpha_V,\beta\oplus\beta_V)$ is a BiHom-Jordan superalgebra where
\begin{eqnarray}
&&\alpha\oplus\alpha_V(x+u)=\alpha(x)+\alpha_V(u),\label{str-pr-semi-dir1}\\
&&\beta\oplus\beta_V(x+u)=\beta(x)+\beta_V(u),\label{str-pr-semi-dir2}\\
&&(x+u)\cdot_{\mathfrak J\oplus V}(y+v)=x\cdot y+\mathfrak l(x)v+(-1)^{|u||y|}\mathfrak r(y)u,\label{str-pr-semi-dir3}
\end{eqnarray}
for any $x,y\in\mathcal{H}(\mathfrak J)$ and $u,v\in\mathcal{H}(V)$.
\end{thm}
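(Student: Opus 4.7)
My plan is to verify the biconditional by decomposing each axiom of a BiHom-Jordan superalgebra on $\mathfrak{J}\oplus V$ into its $\mathfrak{J}$- and $V$-components. Since $\alpha\oplus\alpha_V$ and $\beta\oplus\beta_V$ act block-diagonally, it is immediate that they are even, commute, and preserve parity, so I only need to check the BiHom-super-symmetry \eqref{J-BiH-skewsym-cond} and the BiHom-Jordan super-identity \eqref{BiH-Jor-identity} on the direct sum. The product $\cdot_{\mathfrak{J}\oplus V}$ in \eqref{str-pr-semi-dir3} keeps the $\mathfrak{J}$-part intact and routes the $\mathfrak{l},\mathfrak{r}$-actions into the $V$-part, so in every equation the $\mathfrak{J}$-components collapse to the corresponding axiom for $\mathfrak{J}$ (which holds by hypothesis), while all the bimodule information sits in the $V$-components.

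For BiHom-super-symmetry, I would expand $(\beta\oplus\beta_V)(x+u)\cdot_{\mathfrak{J}\oplus V}(\alpha\oplus\alpha_V)(y+v)$ for homogeneous $x,y\in\mathfrak{J}$ and $u,v\in V$ and compare with its graded switch. By multilinearity it suffices to set $u=0$ (then $v=0$) separately; the first choice produces $\mathfrak{l}(\beta(x))\alpha_V(v)=(-1)^{|x||v|}\mathfrak{r}(\alpha(x))\beta_V(v)$, which is exactly \eqref{con-bimod-J-1} after renaming, and the second case is symmetric. Hence the super-symmetry axiom on $\mathfrak{J}\oplus V$ is equivalent to \eqref{J-BiH-skewsym-cond} for $\mathfrak{J}$ together with \eqref{con-bimod-J-1}.

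For the BiHom-Jordan super-identity on $\mathfrak{J}\oplus V$, I would use the fact that each instance of $\cdot_{\mathfrak{J}\oplus V}$ produces a $V$-contribution only when exactly one of its two factors lies in $V$, so the $V$-component of the identity is linear in a single $V$-slot. It therefore suffices to plug in $x,y,z,t$ with exactly one of them in $V$ and the others in $\mathfrak{J}$, and to extract the two cases: taking $z=w\in V$ (the non-cyclic slot) and projecting onto $V$ gives precisely \eqref{con-bimod-J-4}, while taking one of the cyclic variables $x,y$, or $t$ to be in $V$ and projecting gives \eqref{con-bimod-J-5}, the three cyclic choices yielding the same condition after relabelling under $\circlearrowleft_{x,y,t}$. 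The $\mathfrak{J}$-components reproduce \eqref{BiH-Jor-identity} for $\mathfrak{J}$ itself, and configurations with two or more $V$-entries can contribute nothing since $\cdot_{\mathfrak{J}\oplus V}$ vanishes on $V\times V$. Reading these extractions in both directions gives the announced equivalence.

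The main obstacle is sign bookkeeping: carefully tracking the Koszul factor $(-1)^{|t|(|x|+|z|)}$ together with the signs hidden in \eqref{str-pr-semi-dir3}, matching the many summands generated by the cyclic sum and by the associator $ass_{\alpha,\beta}$ with the eight explicit terms on the right-hand sides of \eqref{con-bimod-J-4} and \eqref{con-bimod-J-5}, and, where convenient, invoking \eqref{con-bimod-J-1} from the first half to rewrite a few $\mathfrak{l}$-terms as $\mathfrak{r}$-terms so that the final expressions align verbatim with the stated bimodule axioms.
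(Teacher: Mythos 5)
Your proposal is correct and follows essentially the same route as the paper: check BiHom-super-symmetry on $\mathfrak J\oplus V$ to extract \eqref{con-bimod-J-1}, then expand the cyclic sum of $ass_{\alpha,\beta}$ on the semidirect sum and identify the $\mathfrak J$-component with \eqref{BiH-Jor-identity} and the $V$-components with \eqref{con-bimod-J-4}--\eqref{con-bimod-J-5}; your observation that the $V$-part is linear in a single $V$-slot (and vanishes with two or more $V$-entries) is just a cleaner packaging of the paper's brute-force expansion, and the claim that the three cyclic slots give one and the same condition up to relabelling matches what the paper does implicitly.

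One concrete correction: you have the slot-to-axiom correspondence reversed. Putting the $V$-element in the $t$-slot (equivalently, any of the cyclic slots $x,y,t$) produces the terms $\mathfrak l\big((\beta^2(x)\cdot\alpha\beta(y))\cdot\alpha^2\beta(z)\big)\alpha_V^3\beta_V(\theta)-\mathfrak l(\cdots)\mathfrak l(\alpha^2\beta(z))\alpha_V^3(\theta)-\cdots$, i.e.\ condition \eqref{con-bimod-J-4}, while putting it in the non-cyclic $z$-slot produces $\mathfrak r(\alpha^3\beta(t))\mathfrak l(\beta^2(x)\cdot\alpha\beta(y))\alpha_V^2\beta_V(w)-\cdots$, i.e.\ condition \eqref{con-bimod-J-5} after renaming. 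This does not break the argument (the computation would force the correct identification), but as written the sketch assigns each extracted identity to the wrong bimodule axiom.
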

\begin{proof}
 Let $x,y\in\mathcal{H}(\mathfrak J)$ and $u,v\in\mathcal{H}(V)$, then, the binary operation $\cdot_{\mathfrak J\oplus V}$ satisfying condition \eqref{J-BiH-skewsym-cond} if and only if 
$$\beta\oplus\beta_V(x+u)\cdot_{\mathfrak J\oplus V}\alpha\oplus\alpha_V(y+v)=\beta\oplus\beta_V(y+v)\cdot_{\mathfrak J\oplus V}\alpha\oplus\alpha_V(x+u),$$
which implies that $$\beta(x)\cdot\alpha(y)+\mathfrak l(\beta(x))\alpha_V(v)+(-1)^{|u||y|}\mathfrak r(\alpha(y))\beta_V(u)=(-1)^{|x||y|}\beta(y)\cdot\alpha(x)+(-1)^{|u||y|}\mathfrak l(\beta(y))\alpha_V(u)+\mathfrak r(\alpha(x))\beta_V(v),$$ which gives by a direct computation that $l(\beta(x))\alpha_V(u)=\mathfrak r(\alpha(x))\beta_V(u)$ for any $x\in\mathcal{H}(\mathfrak J)$ and $u\in\mathcal{H}(V)$.\\
Let $x,y,z,t\in\mathcal{H}(\mathfrak J)$ and $u,v,w,\theta\in\mathcal{H}(V)$, then we have
\small{\begin{eqnarray*}
&&ass_{\mathfrak J\oplus V}\big((\beta\oplus\beta_V)^2(x+u)\cdot_{\mathfrak J\oplus V}(\alpha\oplus\alpha_V)(\beta\oplus\beta_V)(y+v),(\alpha\oplus\alpha_V)^2(\beta\oplus\beta_V)(z+w),(\alpha\oplus\alpha_V)^3(t+\theta)\big)\\&&=ass_{\mathfrak J\oplus V}\big(\beta^2(x)\cdot\alpha\beta(y)+\mathfrak l(\beta^2(x))\alpha_V\beta_V(v)+(-1)^{|x||y|}\mathfrak r(\alpha\beta(y))\beta_V^2(u),\alpha^2\beta(z)+\alpha_V^2\beta_V(w),\alpha^3(t)+\alpha_V^3(\theta)\big)\\&&=\Big(\big(\beta^2(x)\cdot\alpha\beta(y)+\mathfrak l(\beta^2(x))\alpha_V\beta_V(v)+(-1)^{|x||y|}\mathfrak r(\alpha\beta(y))\beta_V^2(u)\big)\cdot_{\mathfrak J\oplus V}\big(\alpha^2\beta(z)+\alpha_V^2\beta_V(w)\big)\Big)\cdot_{\mathfrak J\oplus V}\big(\alpha^3\beta(t)+\alpha_V^3\beta_V(\theta)\big)\\&&-\Big(\alpha(\beta^2(x)\cdot\alpha\beta(y))+\alpha_V\mathfrak l(\beta^2(x))\alpha_V\beta_V(v)+(-1)^{|x||y|}\alpha_V\mathfrak r(\alpha\beta(y))\beta_V^2(u)\Big)\cdot_{\mathfrak J\oplus V}\Big(\big(\alpha^2\beta(z)+\alpha_V^2\beta_V(w)\big)\cdot_{\mathfrak J\oplus V}(\alpha^3(t)+\alpha_V^3(\theta))\Big)\\&&=\big(\big(\beta^2(x)\cdot\alpha\beta(y)\big)\cdot\alpha^2\beta(z)\big)\cdot(\alpha^3\beta(t))-\alpha(\beta^2(x)\cdot\alpha\beta(y))\cdot\big(\alpha^2\beta(z)\cdot\alpha^3(t)\big)+\mathfrak l\big(\big(\beta^2(x)\cdot\alpha\beta(y)\big)\cdot\alpha^2\beta(z)\big)\alpha_V^3\beta_V(\theta)\\&&+(-1)^{|t|(|x|+|y|+|z|)}\mathfrak r\big(\alpha^3\beta(t)\big)\Big(\mathfrak l(\beta^2(x)\cdot\alpha\beta(y))\alpha_V^2\beta_V(w)+(-1)^{|z|(|x|+|y|)}\mathfrak r(\alpha^2\beta(z))\big(\mathfrak l(\beta^2(x))\alpha_V\beta_V(v)+(-1)^{|x||y|}\mathfrak r(\alpha\beta(y))\beta_V^2(u)\big)\Big)\\&&-\mathfrak l(\alpha(\beta^2(x)\cdot\alpha\beta(y)))\big(\mathfrak l(\alpha^2\beta(z))\alpha_V^3(\theta)+(-1)^{|z||t|}\mathfrak r(\alpha^3(t))\alpha_V^2\beta_V(w)\big)\\&&-(-1)^{(|x|+|y|)(|z|+|t|)}\mathfrak r\big(\alpha^2\beta(z)\cdot\alpha^3(t)\big)\big(\alpha_V\mathfrak l(\beta^2(x))\alpha_V\beta_V(v)+(-1)^{|x||y|}\alpha_V\mathfrak r(\alpha\beta(y))\beta_V^2(u)\big).
\end{eqnarray*}}
By the same way, we hve
\begin{eqnarray*}
&&ass_{\mathfrak J\oplus V}\big((\beta\oplus\beta_V)^2(y+v)\cdot_{\mathfrak J\oplus V}(\alpha\oplus\alpha_V)(\beta\oplus\beta_V)(t+\theta),(\alpha\oplus\alpha_V)^2(\beta\oplus\beta_V)(z+w),(\alpha\oplus\alpha_V)^3(x+u)\big)\\&&= \big(\big(\beta^2(y)\cdot\alpha\beta(t)\big)\cdot\alpha^2\beta(z)\big)\cdot(\alpha^3\beta(x))-\alpha(\beta^2(y)\cdot\alpha\beta(t))\cdot\big(\alpha^2\beta(z)\cdot\alpha^3(x)\big)+\mathfrak l\big(\big(\beta^2(y)\cdot\alpha\beta(t)\big)\cdot\alpha^2\beta(z)\big)\alpha_V^3\beta_V(u)\\&&+(-1)^{|x|(|y|+|z|+|t|)}\mathfrak r\big(\alpha^3\beta(x)\big)\Big(\mathfrak l(\beta^2(y)\cdot\alpha\beta(t))\alpha_V^2\beta_V(w)+(-1)^{|z|(|y|+|t|)}\mathfrak r(\alpha^2\beta(z))\big(\mathfrak l(\beta^2(y))\alpha_V\beta_V(\theta)+(-1)^{|y||t|}\mathfrak r(\alpha\beta(t))\beta_V^2(v)\big)\Big)\\&&-\mathfrak l(\alpha(\beta^2(y)\cdot\alpha\beta(t)))\big(\mathfrak l(\alpha^2\beta(z))\alpha_V^3(u)+(-1)^{|x||z|}\mathfrak r(\alpha^3(x))\alpha_V^2\beta_V(w)\big)\\&&-(-1)^{(|x|+|z|)(|y|+|t|)}\mathfrak r\big(\alpha^2\beta(z)\cdot\alpha^3(x)\big)\big(\alpha_V\mathfrak l(\beta^2(y))\alpha_V\beta_V(\theta)+(-1)^{|y||t|}\alpha_V\mathfrak r(\alpha\beta(t))\beta_V^2(v)\big),
\end{eqnarray*}
and
\begin{eqnarray*}
&&ass_{\mathfrak J\oplus V}\big((\beta\oplus\beta_V)^2(t+\theta)\cdot_{\mathfrak J\oplus V}(\alpha\oplus\alpha_V)(\beta\oplus\beta_V)(x+u),(\alpha\oplus\alpha_V)^2(\beta\oplus\beta_V)(z+w),(\alpha\oplus\alpha_V)^3(y+v)\big)\\&&= \big(\big(\beta^2(t)\cdot\alpha\beta(x)\big)\cdot\alpha^2\beta(z)\big)\cdot(\alpha^3\beta(y))-\alpha(\beta^2(t)\cdot\alpha\beta(x))\cdot\big(\alpha^2\beta(z)\cdot\alpha^3(y)\big)+\mathfrak l\big(\big(\beta^2(t)\cdot\alpha\beta(x)\big)\cdot\alpha^2\beta(z)\big)\alpha_V^3\beta_V(v)\\&&+(-1)^{|y|(|x|+|z|+|t|)}\mathfrak r\big(\alpha^3\beta(y)\big)\Big(\mathfrak l(\beta^2(t)\cdot\alpha\beta(x))\alpha_V^2\beta_V(w)+(-1)^{|z|(|t|+|x|)}\mathfrak r(\alpha^2\beta(z))\big(\mathfrak l(\beta^2(t))\alpha_V\beta_V(u)+(-1)^{|x||t|}\mathfrak r(\alpha\beta(x))\beta_V^2(\theta)\big)\Big)\\&&-\mathfrak l(\alpha(\beta^2(t)\cdot\alpha\beta(x)))\big(\mathfrak l(\alpha^2\beta(z))\alpha_V^3(v)+(-1)^{|y||z|}\mathfrak r(\alpha^3(y))\alpha_V^2\beta_V(w)\big)\\&&-(-1)^{(|x|+|t|)(|y|+|z|)}\mathfrak r\big(\alpha^2\beta(z)\cdot\alpha^3(y)\big)\big(\alpha_V\mathfrak l(\beta^2(t))\alpha_V\beta_V(u)+(-1)^{|x||t|}\alpha_V\mathfrak r(\alpha\beta(x))\beta_V^2(\theta)\big).    
\end{eqnarray*}
The binary operation $\cdot_{\mathfrak J\oplus V}$ satisfies condition \eqref{BiH-Jor-identity} on $\mathfrak J\oplus V$ if and only if $$\displaystyle\circlearrowleft_{x,y,t}(-1)^{|t|(|x|+|z|)}ass_{\mathfrak J\oplus V}\big((\beta\oplus\beta_V)^2(x+u)\cdot_{\mathfrak J\oplus V}(\alpha\oplus\alpha_V)(\beta\oplus\beta_V)(y+v),(\alpha\oplus\alpha_V)^2(\beta\oplus\beta_V)(z+w),(\alpha\oplus\alpha_V)^3(t+\theta)\big)=0,$$
Which gives by identification the following two independent conditions
\begin{eqnarray*}
&&\mathfrak l\big((\beta^2(x)\cdot\alpha\beta(y))\cdot\alpha^2\beta(z)\big)\alpha_V^3\beta_V(\theta)-\mathfrak l\big(\alpha\beta^2(x)\cdot\alpha^2\beta(y)\big)\mathfrak l(\alpha^2\beta(z))\alpha_V^3(\theta)\\
&&\quad=(-1)^{|z|(|x|+|y|)}\mathfrak r(\alpha^2\beta(z)\cdot\alpha^3(x))\alpha_V(\theta)\mathfrak l(\beta^2(y))\alpha_V\beta_V(\theta)-(-1)^{|y||z|}\mathfrak r(\alpha^3\beta(x))\mathfrak r(\alpha^2\beta(z))\mathfrak l(\beta^2(y))\alpha_V\beta_V(\theta)\\
&&\quad+(-1)^{|z|(|x|+|y|)}\mathfrak r(\alpha^2\beta(z)\cdot\alpha^3(y))\alpha_V(\theta)\mathfrak r(\alpha\beta(x))\beta_V^2-(-1)^{|x|(|y|+|z|)}\mathfrak r(\alpha^3\beta(y))\mathfrak r(\alpha^2\beta(z))\mathfrak r(\alpha\beta(x))\beta_V^2(\theta),
\end{eqnarray*}
and
\begin{eqnarray*}
&&\mathfrak r(\alpha^3\beta(t))\mathfrak l(\beta^2(x)\cdot\alpha\beta(y))\alpha_V^2\beta_V(w)-\mathfrak l(\alpha\beta^2(x)\cdot\alpha^2\beta(y))\mathfrak r(\alpha^3(t))\alpha_V^2\beta_V(w)\\
&&\quad=(-1)^{|z|(|x|+|y|)}\mathfrak l\big(\alpha(\beta^2(y)\cdot\alpha\beta(t))\big)\mathfrak r(\alpha^3(x))\alpha_V^2\beta_V(w)-(-1)^{|x|(|y|+|z|)}\mathfrak r(\alpha^3\beta(x))\mathfrak l(\beta^2(y)\cdot\alpha\beta(t))\alpha_V^2\beta_V(w)\\
&&\quad+\mathfrak l\big(\alpha(\beta^2(t)\cdot\alpha\beta(x))\big)r(\alpha^3(y))\alpha_V^2\beta_V(w)-(-1)^{|z|(|x|+|y|)}\mathfrak r(\alpha^3\beta(y))\mathfrak l(\beta^2(t)\cdot\alpha\beta(x))\alpha_V^2\beta_V(w).    
\end{eqnarray*}
These two conditions are equivalent respectively to conditions \eqref{con-bimod-J-4} and \eqref{con-bimod-J-5}. Thus, we conclude that, $\cdot_{\mathfrak J\oplus V},\;\alpha\oplus\alpha_V$ and $\beta\oplus\beta_V$ defines on $\mathfrak J\oplus V$ a BiHom-Jordan superalgebra structure if and only if $(V,\mathfrak l,\mathfrak r,\alpha_V,\beta_V)$ is a BiHom-super-bimodule of $(\mathfrak J,\cdot,\alpha,\beta)$.
\end{proof}
\begin{cor}
Let $(V,\mathfrak l,\mathfrak r,\alpha_V,\beta_V)$ be a BiHom-super-bimodule of a multiplicative BiHom-Jordan superalgebra $(\mathfrak J,\cdot,\alpha,\beta)$. Then, the linear maps $\mathfrak l$ and $\mathfrak r$ satisfying the following conditions   \begin{eqnarray}
&&\alpha_V\mathfrak l(x)=\mathfrak l(\alpha(x))\alpha_V~~~~\text{and}~~~~
\beta_V\mathfrak l(x)=\mathfrak l(\beta(x))\beta_V,\label{con-bimod-J-2}\\
&&\alpha_V\mathfrak r(x)=\mathfrak r(\alpha(x))\alpha_V~~~~\text{and}~~~~
\beta_V\mathfrak r(x)=\mathfrak r(\beta(x))\beta_V,\label{con-bimod-J-3}   
\end{eqnarray} 
for any $x,y\in\mathcal{H}(\mathfrak J)$.
\end{cor}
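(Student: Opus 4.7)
The plan is to derive the asserted intertwining relations by invoking the semi-direct product construction of Theorem \ref{pr-semi-direct-BiH-Jord-Bim}. That theorem identifies BiHom-super-bimodule structures on $V$ with BiHom-Jordan superalgebra structures on $\mathfrak{J}\oplus V$ via the formulas \eqref{str-pr-semi-dir1}--\eqref{str-pr-semi-dir3}. In the multiplicative setting, where $\alpha$ and $\beta$ are superalgebra morphisms of $\mathfrak{J}$, the natural requirement is that the semi-direct product be itself multiplicative, i.e.\ that $\alpha\oplus\alpha_V$ and $\beta\oplus\beta_V$ be superalgebra morphisms of $\cdot_{\mathfrak{J}\oplus V}$. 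The key observation is that this morphism requirement is equivalent to the four identities \eqref{con-bimod-J-2}--\eqref{con-bimod-J-3}.

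Concretely, I would test the morphism property on crossed pure elements. For $x\in\mathcal{H}(\mathfrak{J})$ and $v\in\mathcal{H}(V)$, applying \eqref{str-pr-semi-dir1} and \eqref{str-pr-semi-dir3} to both sides of
\[
(\alpha\oplus\alpha_V)\bigl((x+0)\cdot_{\mathfrak{J}\oplus V}(0+v)\bigr)\;=\;(\alpha\oplus\alpha_V)(x+0)\cdot_{\mathfrak{J}\oplus V}(\alpha\oplus\alpha_V)(0+v),
\]
and reading off the $V$-component, gives $\alpha_V(\mathfrak{l}(x)v)=\mathfrak{l}(\alpha(x))\alpha_V(v)$, which is the first identity of \eqref{con-bimod-J-2}. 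Testing instead on $(0+u)\cdot_{\mathfrak{J}\oplus V}(y+0)$ produces $\alpha_V(\mathfrak{r}(y)u)=\mathfrak{r}(\alpha(y))\alpha_V(u)$, the first identity of \eqref{con-bimod-J-3}. The pure $\mathfrak{J}\times\mathfrak{J}$ components recover the already assumed multiplicativity of $\alpha$ on $\mathfrak{J}$, while the $V\times V$ components are trivial since the restriction of $\cdot_{\mathfrak{J}\oplus V}$ to $V$ vanishes. Repeating the same argument with $\beta\oplus\beta_V$ in place of $\alpha\oplus\alpha_V$ furnishes the remaining two relations.

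The main obstacle is conceptual rather than computational: one must justify treating the semi-direct product as multiplicative. I would argue that the multiplicativity of the bimodule structure is exactly the compatibility requirement mirroring that of the ambient BiHom-Jordan superalgebra, and that it is precisely encoded by these intertwining relations; this also aligns with how multiplicativity is routinely absorbed into bimodule definitions in the Hom/BiHom-literature. Once this is granted, extracting the four conditions reduces to the straightforward matching of components via \eqref{str-pr-semi-dir3}, with no appeal needed to the deeper structural identities \eqref{con-bimod-J-4} or \eqref{con-bimod-J-5}.
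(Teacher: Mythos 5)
The paper gives no proof of this corollary at all: it is stated as an unproved consequence of Theorem~\ref{pr-semi-direct-BiH-Jord-Bim}, which is essentially the route you propose. Your component-matching step is correct as far as it goes: \emph{if} $\alpha\oplus\alpha_V$ and $\beta\oplus\beta_V$ are morphisms for $\cdot_{\mathfrak J\oplus V}$, then reading off the $V$-component of the mixed products $(x+0)\cdot_{\mathfrak J\oplus V}(0+v)$ and $(0+u)\cdot_{\mathfrak J\oplus V}(y+0)$ does produce \eqref{con-bimod-J-2} and \eqref{con-bimod-J-3}.

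The genuine gap is exactly the step you flag as ``the main obstacle'' and then wave through: nothing establishes that the semi-direct product is multiplicative. Theorem~\ref{pr-semi-direct-BiH-Jord-Bim} only asserts that $(\mathfrak J\oplus V,\cdot_{\mathfrak J\oplus V},\alpha\oplus\alpha_V,\beta\oplus\beta_V)$ is a BiHom-Jordan superalgebra, and multiplicativity is an extra property not required by that notion. Your justification --- that multiplicativity of the semidirect product ``is precisely encoded by these intertwining relations'' --- is circular: since $\alpha$ is already assumed to be a morphism on $\mathfrak J$ and $\cdot_{\mathfrak J\oplus V}$ vanishes on $V\times V$, the statement ``$\alpha\oplus\alpha_V$ is a morphism'' is \emph{equivalent} to the identities you are asked to prove, so assuming it assumes the conclusion. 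Worse, the conclusion genuinely does not follow from the bimodule axioms \eqref{con-bimod-J-1}, \eqref{con-bimod-J-4}, \eqref{con-bimod-J-5} alone: take $\mathfrak J=\mathbb K x$ purely even with zero product and $\alpha=\beta=\mathrm{id}$ (a multiplicative BiHom-Jordan superalgebra), $V=\mathbb K^2$ purely even, $\mathfrak r(x)=E_{12}$, $\beta_V=\mathrm{id}$, $\alpha_V=\mathrm{diag}(1,2)$, and $\mathfrak l(x)=E_{12}\alpha_V^{-1}$. Then \eqref{con-bimod-J-1} holds by construction, and \eqref{con-bimod-J-4}--\eqref{con-bimod-J-5} hold because every term either evaluates $\mathfrak l$ or $\mathfrak r$ at a zero product or contains the square of $E_{12}$; yet $\alpha_V\mathfrak r(x)=E_{12}\neq 2E_{12}=\mathfrak r(\alpha(x))\alpha_V$. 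So the corollary needs either the additional hypothesis that the semidirect product is multiplicative, or the relations \eqref{con-bimod-J-2}--\eqref{con-bimod-J-3} must be built into the definition of a BiHom-super-bimodule; as written, neither your argument nor the paper's implicit one closes this gap.
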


 Let $(V,\mathfrak l,\mathfrak r,\alpha_V,\beta_V)$ be a BiHom-super-bimodule of a multiplicative BiHom-Jordan superalgebra $(\mathfrak J,\cdot,\alpha,\beta)$. Then, by Theorem \ref{pr-semi-direct-BiH-Jord-Bim}, the product $\cdot_{\mathfrak J\oplus V}$ defined by Eq. \eqref{str-pr-semi-dir3} is BiHom-super-symmetric, which gives by a direct computation that
\begin{equation}\label{bim-BiH-Jor-equiv-repr}
\mathfrak l(x)v=\mathfrak r(\alpha\beta^{-1}(x))\alpha_V^{-1}\beta_V(v),\;\forall x\in\mathcal{H}(\mathfrak J),\;v\in\mathcal{H}(V),    
\end{equation}
which allows us to say the BiHom-Jordan superalgebra representation as we are interested to study in the following.
\begin{defn}
A representation of a regular multiplicative BiHom-Jordan superalgebra $(\mathfrak J,\cdot,\alpha,\beta)$ is a quadruple $(V,\rho,\alpha_V,\beta_V)$ consisting of a $\mathbb Z_2$-graded vector space $V$, an even linear map $\rho:\mathfrak J\to \mathfrak{gl}(V)$ and two even commuting automorphisms $\alpha_V,\beta_V\in \mathcal{A}ut(V)$ such that:
\begin{equation}\label{con-rep-J-1}
\alpha_V\rho(x)=\rho(\alpha(x))\alpha_V~~~~\text{and}~~~~
\beta_V\rho(x)=\rho(\beta(x))\beta_V,
\end{equation}
\begin{align}
\sum_{x,y,z}(-1)^{|x||z|}\rho\big(\alpha\beta^2(x)\cdot\alpha^2\beta(y)\big)\rho(\alpha^2\beta(z))\alpha_V^3=&(-1)^{|x||z|}\mathfrak \rho\big((\beta^2(x)\cdot\alpha\beta(y))\cdot\alpha^2\beta(z)\big)\alpha_V^3\beta_V\label{con-rep-J-2}\\&+(-1)^{|x|(|y|+|z|)}\rho(\alpha^2\beta^2(x))\rho(\alpha^2\beta(z))\rho(\alpha^2(y))\alpha_V^3\beta_V^{-1}\nonumber\\
&+(-1)^{|x||y|}\rho(\alpha^2\beta^2(y))\rho(\alpha^2\beta(z))\rho(\alpha^2(x))\alpha_V^3\beta_V^{-1},\nonumber\\
\sum_{x,y,z}(-1)^{|x||z|}\rho(\alpha^2\beta^2(x))\rho\big(\alpha\beta(y)\cdot\alpha^2(z)\big)\alpha_V^3
=&\sum_{x,y,z}(-1)^{|x||z|}\rho\big(\alpha\beta^2(x)\cdot\alpha^2\beta(y)\big)\rho(\alpha^2\beta(z))\alpha_V^3\label{con-rep-J-3},
\end{align}
for any $x,y,z\in \mathcal{H}(\mathfrak{J})$, where  $\displaystyle\sum_{x,y,z}$ denotes the cyclic sum over $(x,y,z)$.\\
Two representations $(V_1,\rho_1,\alpha_{V_1},\beta_{V_1})$ and $(V_2,\rho_2,\alpha_{V_2},\beta_{V_2})$ of a BiHom-Jordan superalgebra $(\mathfrak J,\cdot,\alpha,\beta)$ are called isomorphic
if there exists an even linear isomorphism $\Phi:V_1\to V_2$ satisfying
\begin{align}
\Phi\alpha_{V_1}=\alpha_{V_2}\Phi\;\;&\text{and}\;\;\Phi\beta_{V_1}=\beta_{V_1}\Phi,\label{isom-reps-1}\\
\Phi\rho_1(x)&=\rho_2(x)\Phi,\;\forall x\in\mathfrak J.\label{isom-reps-2}
\end{align}

\end{defn}
\begin{re}
If $\alpha=\beta$ and $\alpha_V=\beta_V$, we recover the representation of Hom-Jordan superalgebra (\cite{M-N-S}).   
\end{re}
\begin{ex}
 Let $(\mathfrak{J}, \cdot, \alpha,\beta) $ be a BiHom-Jordan superalgebra. Then, $(\mathfrak{J},ad,\alpha,\beta)$ where, the even linear map $ad:\mathfrak{J}\longrightarrow End(\mathfrak{J})$ is defined by
 $$ad(x)(y)= x\cdot y,~~\forall x,y\in \mathfrak{J}, $$
 is a representation of $\mathfrak{J}$  called the adjoint representation.
 \end{ex}
 Let $(V_1,\rho_1,\alpha_{V_1},\beta_{V_1})$  and $(V_2,\rho_2,\alpha_{V_2},\beta_V^2)$ be two representations of a BiHom-Jordan superalgebra $(\mathfrak J,\cdot,\alpha,\beta)$. Define the linear map $\rho_1\oplus\rho_2:\mathfrak J\to End(V_1\oplus V_2) $ by 
 \begin{equation}\label{direct-sum-rep}
 \rho_1\oplus\rho_2(x)(u,v):=(\rho_1(x)(u),\rho_2(x)v),\;\forall x\in\
  \mathfrak J,\;(u,v)\in V_1\times V_2.
 \end{equation}
\begin{prop}
Let $(V_1,\rho_1,\alpha_{V_1},\beta_{V_1})$  and $(V_2,\rho_2,\alpha_{V_2},\beta_{V_2})$ be two representations of a BiHom-Jordan superalgebra $(\mathfrak J,\cdot,\alpha,\beta)$. Then, $(V_1\oplus V_2,\rho_1\oplus\rho_2,\alpha_{V_1}\oplus\alpha_{V_2},\beta_{V_1}\oplus\beta_{V_2})$ is a representation of $\mathfrak J$ called direct sum representation of $(V_1,\rho_1,\alpha_{V_1},\beta_{V_1})$  and $(V_2,\rho_2,\alpha_{V_2},\beta_{V_2})$, where $\alpha_{V_1}\oplus\alpha_{V_2}$ and $\beta_{V_1}\oplus\beta_{V_2}$ are defined by Eq. \eqref{str-pr-semi-dir1}-\eqref{str-pr-semi-dir2} and $\rho_1\oplus\rho_2$ is defined by Eq. \eqref{direct-sum-rep}.  
\end{prop}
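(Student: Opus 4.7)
The plan is to verify directly that $(V_1\oplus V_2,\rho_1\oplus\rho_2,\alpha_{V_1}\oplus\alpha_{V_2},\beta_{V_1}\oplus\beta_{V_2})$ satisfies the three defining axioms \eqref{con-rep-J-1}, \eqref{con-rep-J-2} and \eqref{con-rep-J-3} of a representation of $(\mathfrak J,\cdot,\alpha,\beta)$, by exploiting the fact that each of the operators involved acts \emph{diagonally} on the direct sum $V_1\oplus V_2$. The diagonal structure is preserved under composition, addition, and scalar multiplication, so every identity to be checked reduces componentwise to the corresponding identity for $\rho_1$ on $V_1$ and for $\rho_2$ on $V_2$, both of which hold by assumption.

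I would first check that $\alpha_{V_1}\oplus\alpha_{V_2}$ and $\beta_{V_1}\oplus\beta_{V_2}$ are even, commuting automorphisms of $V_1\oplus V_2$: commutativity follows slot-by-slot from $\alpha_{V_i}\beta_{V_i}=\beta_{V_i}\alpha_{V_i}$, parity is preserved because each $\alpha_{V_i},\beta_{V_i}$ is even, and bijectivity follows from the bijectivity of the $\alpha_{V_i},\beta_{V_i}$. Then, for \eqref{con-rep-J-1}, given $x\in\mathcal{H}(\mathfrak J)$ and $(u,v)\in V_1\oplus V_2$, one computes
$$(\alpha_{V_1}\oplus\alpha_{V_2})(\rho_1\oplus\rho_2)(x)(u,v)=\big(\alpha_{V_1}\rho_1(x)u,\ \alpha_{V_2}\rho_2(x)v\big),$$
which by the axiom \eqref{con-rep-J-1} applied to each $(V_i,\rho_i,\alpha_{V_i},\beta_{V_i})$ equals $(\rho_1\oplus\rho_2)(\alpha(x))(\alpha_{V_1}\oplus\alpha_{V_2})(u,v)$; the identity for $\beta$ is identical in form.

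To verify \eqref{con-rep-J-2} and \eqref{con-rep-J-3} I would observe that every operator on $V_1\oplus V_2$ appearing in either identity is a composition of operators of the form $A_1\oplus A_2$ (namely $\rho_i(\cdot)$, $\alpha_{V_i}$, $\beta_{V_i}$ and their powers), so the whole cyclic sum on the left-hand side, evaluated at $(u,v)$, splits as $(S_1(u),S_2(v))$ where $S_i$ is precisely the cyclic sum appearing in \eqref{con-rep-J-2} (resp.\ \eqref{con-rep-J-3}) for $\rho_i$. Since the sign factors $(-1)^{|x||z|}$ etc.\ depend only on the parities of $x,y,z$ and not on the representation, they appear identically on both components, so the identity on $V_1\oplus V_2$ is equivalent to the conjunction of the corresponding identities on $V_1$ and $V_2$, both of which hold by hypothesis.

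The main \emph{obstacle} here is purely bookkeeping: the identities \eqref{con-rep-J-2} and \eqref{con-rep-J-3} are lengthy, so one must be careful that each term on both sides decomposes as $A_1\oplus A_2$ with the same combinatorial structure, and that no mixing between $V_1$ and $V_2$ occurs. There is no genuine mathematical difficulty. A more conceptual alternative would be to pass through Theorem \ref{pr-semi-direct-BiH-Jord-Bim} and the equivalence \eqref{bim-BiH-Jor-equiv-repr}: one shows that the direct sum of two BiHom-super-bimodules is again a BiHom-super-bimodule (by the same diagonal argument applied to the semi-direct product BiHom-Jordan superalgebra), then translates back to representations; but the direct componentwise check presented above is the shortest route.
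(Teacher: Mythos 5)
Your proposal is correct and follows essentially the same route as the paper: a direct componentwise verification that the diagonal operators on $V_1\oplus V_2$ satisfy \eqref{con-rep-J-1}, \eqref{con-rep-J-2} and \eqref{con-rep-J-3}, reducing each identity to the corresponding one for $\rho_1$ and $\rho_2$ separately. The paper carries out the same computation explicitly for \eqref{con-rep-J-1} and \eqref{con-rep-J-2} and leaves \eqref{con-rep-J-3} as "similarly", exactly as you propose.
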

\begin{proof}
Let $x\in\mathfrak J$ and $(u,v)\in V_1\oplus V_2$. Then we have 
\begin{align*}
(\alpha_{V_1}\oplus\alpha_{V_2})(\rho_1\oplus\rho_2)(x)(u,v)&=(\alpha_{V_1}\oplus\alpha_{V_2})(\rho_1(x)(u),\rho_2(x)v)\\&=\alpha_{V_1}\rho_1(x)(u)+ \alpha_{V_2}\rho_2(x)(u)\\&= \rho_1(\alpha(x))\alpha_{V_1}(u)+\rho_2(\alpha(x))\alpha_{V_2}(u)\\&=\rho_1\oplus\rho_2(\alpha(x))(\alpha_{V_1}(u),\alpha_{V_2}(v))\\&= (\rho_1\oplus\rho_2)(\alpha(x))(\alpha_{V_1}\oplus\alpha_{V_2})(u,v).
\end{align*}
By the same way, we can show that $(\beta_{V_1}\oplus\beta_{V_2})(\rho_1\oplus\rho_2)(x)(u,v)=(\rho_1\oplus\rho_2)(\beta(x))(\beta_{V_1}\oplus\beta_{V_2})(u,v)$.\\

For any $x,y,z\in\mathfrak J$ and $(u,v)\in V_1\times V_2$, then by using Eq. \eqref{con-rep-J-2}, we have
\begin{align*}
&\sum_{x,y,z}(-1)^{|x||z|}(\rho_1\oplus\rho_2)\big(\alpha\beta^2(x)\cdot\alpha^2\beta(y)\big)(\rho_1\oplus\rho_2)(\alpha^2\beta(z))(\alpha_{V_1}\oplus\alpha_{V_2})^3(u,v)\\=& \sum_{x,y,z}(-1)^{|x||z|}(\rho_1\oplus\rho_2)\big(\alpha\beta^2(x)\cdot\alpha^2\beta(y)\big)\big(\rho_1(\alpha^2\beta(z))(\alpha_{V_1}^3(u)),\rho_2(\alpha^2\beta(z))(\alpha_{V_2}^3(v))\big)\\&=\Big(\sum_{x,y,z}(-1)^{|x||z|}\rho_1\big(\alpha\beta^2(x)\cdot\alpha^2\beta(y)\big)\rho_1\big(\alpha^2\beta(z)\big)\alpha_{V_1}^3(u),\sum_{x,y,z}(-1)^{|x||z|}\rho_2\big(\alpha\beta^2(x)\cdot\alpha^2\beta(y)\big)\rho_2\big(\alpha^2\beta(z)\big)\alpha_{V_2}^3(v)\Big)\\&=\Big((-1)^{|x||z|}\mathfrak \rho_1\big((\beta^2(x)\cdot\alpha\beta(y))\cdot\alpha^2\beta(z)\big)\alpha_{V_1}^3\beta_{V_1}(u)+(-1)^{|x|(|y|+|z|)}\rho_1(\alpha^2\beta^2(x))\rho_1(\alpha^2\beta(z))\rho_1(\alpha^2(y))\alpha_{V_1}^3\beta_{V_1}^{-1}(u)\\
&+(-1)^{|x||y|}\rho_1(\alpha^2\beta^2(y))\rho(\alpha^2\beta(z))\rho_1(\alpha^2(x))\alpha_{V_1}^3\beta_{V_1}^{-1}(u)\;,\;(-1)^{|x||z|}\mathfrak \rho_2\big((\beta^2(x)\cdot\alpha\beta(y))\cdot\alpha^2\beta(z)\big)\alpha_{V_2}^3\beta_{V_2}(v)\\
&+(-1)^{|x|(|y|+|z|)}\rho_2(\alpha^2\beta^2(x))\rho_2(\alpha^2\beta(z))\rho_2(\alpha^2(y))\alpha_{V_2}^3\beta_{V_2}^{-1}(v)\\&+(-1)^{|x||y|}\rho_2(\alpha^2\beta^2(y))\rho_2(\alpha^2\beta(z))\rho_2(\alpha^2(x))\alpha_{V_2}^3\beta_{V_2}^{-1}(v)\Big)\\&=(-1)^{|x||z|}\mathfrak (\rho_1\oplus\rho_2)\big((\beta^2(x)\cdot\alpha\beta(y))\cdot\alpha^2\beta(z)\big)(\alpha_{V_1}\oplus\alpha_{V_2})^3(\beta_{V_1}\oplus\beta_{V_2})(u,v)\\&+(-1)^{|x|(|y|+|z|)}(\rho_1\oplus\rho_2)(\alpha^2\beta^2(x))(\rho_1\oplus\rho_2)(\alpha^2\beta(z))(\rho_1\oplus\rho_2)(\alpha^2(y))(\alpha_{V_1}\oplus\alpha_{V_2})^3(\beta_{V_1}\oplus\beta_{V_2})^{-1}(u,v)\\
&+(-1)^{|x||y|}(\rho_1\oplus\rho_2)(\alpha^2\beta^2(y))(\rho_1\oplus\rho_2)(\alpha^2\beta(z))(\rho_1\oplus\rho_2)(\alpha^2(x))(\alpha_{V_1}\oplus\alpha_{V_2})^3(\beta_{V_1}\oplus\beta_{V_2})^{-1}(u,v),    
\end{align*}
which gives that, the condition \eqref{con-rep-J-2} is satisfied by $\rho_1\oplus\rho_2$. Similarly, we can show that the condition \eqref{con-rep-J-3} is satified. The Proposition is proved.
\end{proof}

\section{Dual representation of BiHom-Jordan superalgebras}\label{Sec3}
In this section, we interest to construct the dual representation of a representation of multiplicative regular BiHom-Jordan superalgebra without any additional condition and we give some related fundamental results.\\

Let $(V,\rho,\alpha_V,\beta_V)$ be a representation of a multiplicative regular Bihom-Jordan superalgebra $(\mathfrak{J},\cdot,\alpha,\beta)$. Let us consider $V^*$ the dual space of $V$ and $\tilde{\alpha}_V,\tilde{\beta}_V:V^{*}\rightarrow V^{*}$ two homomorphisms defined by
 $\tilde{\alpha}_V(f)=f\circ \alpha_V$,  $\tilde{\beta}_V(f)=f\circ \beta_V$, $\forall f\in V^{*}$. Define the even linear map $\widetilde{\rho}:\mathfrak{J}\rightarrow \mathfrak{gl}(V^{\ast})$ by $$\widetilde{\rho}(x)(f)=(-1)^{|f||x|}f\circ\rho(x),~\forall x,y\in\mathcal{H}(\mathfrak{J}), f\in V^{\ast}.$$
 
 However, in general $\widetilde{\rho}$ is not a representation of $\mathfrak{J}$. Define
\begin{equation}\label{def-Jor-dual-repr}
\rho^\star:\mathfrak{J}\longrightarrow End(V^*):\,\rho^\star(x)(\xi):=\widetilde{\rho}(\beta(x))(\beta_{V}^{-2})^*(\xi),\;\forall x\in \mathcal{H}(\mathfrak{J}),\xi\in \mathcal{H}(V^*).
\end{equation}
More precisely 
$$\langle\rho^\star(x)(\xi),u\rangle=\langle \xi,\rho(\beta^{-3}(x))(\beta_V^{-2}(u))\rangle,\;\forall x\in \mathcal{H}(\mathfrak{J}),\xi\in \mathcal{H}(V^*),u\in \mathcal{H}(V).$$
\begin{thm}
Let $(V,\rho,\alpha_V,\beta_V)$ be a representation of a multiplicative regular BiHom-Jordan superalgebra $(\mathfrak{J},\cdot,\alpha,\beta)$. Then $(V^*,\rho^\star,(\alpha_V^{-1})^*,(\beta_V^{-1})^*)$ is a representation of $(\mathfrak{J},\cdot,\alpha,\beta)$ on $V^*$, where $\rho^\star$ is defined above by \eqref{def-Jor-dual-repr}.    
\end{thm}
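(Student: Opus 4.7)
The plan is to verify the three defining identities of a representation, namely the intertwining conditions \eqref{con-rep-J-1} and the two cyclic identities \eqref{con-rep-J-2} and \eqref{con-rep-J-3}, for the quadruple $(V^*,\rho^\star,(\alpha_V^{-1})^*,(\beta_V^{-1})^*)$, by reducing each of them, via the pairing between $V^*$ and $V$, to the corresponding identity already known for the primal representation $(V,\rho,\alpha_V,\beta_V)$. The whole argument is a dualization: once everything is expressed as $\langle\xi,\Phi(u)\rangle$ with $\Phi$ built from $\rho$, the $\rho^\star$-statement becomes the $\rho$-statement evaluated on appropriately shifted arguments.

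First, for the two intertwining relations \eqref{con-rep-J-1}, I would evaluate $(\alpha_V^{-1})^*\rho^\star(x)(\xi)$ and $\rho^\star(\alpha(x))(\alpha_V^{-1})^*(\xi)$ on a homogeneous $u\in\mathcal{H}(V)$. Using the explicit formula $\langle\rho^\star(x)(\xi),u\rangle=\langle\xi,\rho(\beta^{-3}(x))\beta_V^{-2}(u)\rangle$ stated just before the theorem, both sides collapse to a pairing of $\xi$ with a $\rho$-image acting on $u$; the equality follows at once from $\alpha_V\rho(x)=\rho(\alpha(x))\alpha_V$ together with the commutativity of $\alpha_V$ and $\beta_V$. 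The analogous computation handles the $\beta$-side of \eqref{con-rep-J-1}.

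Second, for the Jordan identities \eqref{con-rep-J-2} and \eqref{con-rep-J-3} on $V^*$, I would evaluate both sides on an arbitrary $\xi\in\mathcal{H}(V^*)$ and pair with an arbitrary $u\in\mathcal{H}(V)$. Iterated use of the formula for $\rho^\star$, together with \eqref{con-rep-J-1} for $\rho$ (to push each $\beta_V^{-2}$ past $\rho(\beta^{-3}(\cdot))$ at the cost of replacing the argument by $\beta^{-5}(\cdot)$), turns every term of the desired identity for $\rho^\star$ into a pairing $\langle\xi,\Phi(u)\rangle$, where $\Phi$ is a composition of $\rho$'s evaluated on suitably $\beta^{-1}$-shifted copies of $x,y,z$, acting on $u$ precomposed with a fixed power of $\beta_V^{-1}$ and $\alpha_V^{-1}$. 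Summing over the cyclic sum and factoring out $\xi$, I would recognize the coefficient of $\xi$ on each side as a linear combination of $\rho$-compositions to which the original identity \eqref{con-rep-J-2} (respectively \eqref{con-rep-J-3}) for $\rho$ applies verbatim, after substitutions of the form $x\leadsto\beta^{-k}(x)$, $y\leadsto\beta^{-k}(y)$, $z\leadsto\beta^{-k}(z)$ for a common power $k$ and a corresponding shift on $u$.

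The main obstacle is purely bookkeeping. Each time one commutes an outer $\beta_V^{-2}$ through a $\rho(\beta^{-3}(\cdot))$, or pulls an outer $\alpha_V^{\pm1}$ through, one must update simultaneously the $\beta$-weight of the argument and the surviving power of $\beta_V$ or $\alpha_V$ on $u$; and each of the three applications of $\rho^\star$ inside a triple composition produces a Koszul sign $(-1)^{|x_i||\xi|}$, so one must confirm that the product of these three signs combines with the cyclic sign $(-1)^{|x||z|}$ on the dual side to reproduce exactly the cyclic sign on the primal side. Once these powers and signs are aligned, the identity for $\rho^\star$ is a direct consequence of the corresponding identity for $\rho$ applied at the shifted arguments, and the theorem follows.
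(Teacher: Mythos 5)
Your proposal is correct and follows essentially the same route as the paper: both intertwining relations are checked directly from the definition of $\rho^\star$, and the two Jordan-type identities are verified by pairing with $u\in\mathcal{H}(V)$, pushing the twist maps through via \eqref{con-rep-J-1}, substituting $\beta^{-k}\alpha^{-k'}$-shifted arguments, and invoking the primal identities. The one bookkeeping point your sketch glosses over, and which the paper handles explicitly, is that dualization reverses the order of composition, so establishing \eqref{con-rep-J-2} for $\rho^\star$ requires first applying \eqref{con-rep-J-3} for $\rho$ (to restore the order of the two outer factors) and only then \eqref{con-rep-J-2}; neither identity alone ``applies verbatim.''
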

\begin{proof}
 For any $x\in\mathcal H(\mathfrak J)$ and $\xi\in\mathcal{H}(V^*)$, we have 
 $$
\rho^\star(\alpha(x))(\alpha_V^{-1})^*(\xi)=\widetilde{\rho}(\alpha\beta(x))(\alpha_V^{-1})^*(\beta_V^{-2})^*(\xi)=(\alpha_V^{-1})^*\widetilde{\rho}(\beta(x))(\beta_V^{-2})^*(\xi)=(\alpha_V^{-1})^*\rho^\star(x)(\xi),$$
and 
$$\rho^\star(\beta(x))(\beta_V^{-1})^*(\xi)=\widetilde{\rho}(\beta^2(x))(\beta_V^{-3})^*(\xi)=(\beta_V^{-1})^*\widetilde{\rho}(\beta(x))(\beta_V^{-2})^*(\xi)=(\beta_V^{-1})^*\rho^\star(x)(\xi).$$
Let $x,y,z\in\mathcal{H}(\mathfrak J),\;u\in\mathcal{H}(V)$ and $\xi\in\mathcal{H}(V^*)$, then, by using Eq. \eqref{con-rep-J-2}, \eqref{con-rep-J-3} and \eqref{def-Jor-dual-repr}, we have
\begin{align*}
\langle\sum_{x,y,z}&(-1)^{|x||z|}\rho^\star\big(\alpha\beta^2(x)\cdot\alpha^2\beta(y)\big)\rho^\star(\alpha^2\beta(z))(\alpha_V^{-3})^*(\xi),u\rangle\\=&\sum_{x,y,z}(-1)^{|x||z|}\langle\widetilde{\rho}\big(\alpha\beta^3(x)\cdot\alpha^2\beta^2(y)\big)(\beta_V^{-2})^*\widetilde{\rho}(\alpha^2\beta^2(z))(\alpha_V^{-3}\beta^{-2})^*(\xi),u\rangle\\=&\sum_{x,y,z}(-1)^{|x||z|}\langle\widetilde{\rho}\big(\alpha\beta^3(x)\cdot\alpha^2\beta^2(y)\big)\widetilde{\rho}(\alpha^2\beta^4(z))(\alpha_V^{-3}\beta^{-4})^*(\xi),u\rangle\\=& \sum_{x,y,z}(-1)^{|y||z|}(-1)^{|\xi|(|x|+|y|+|z|)}\langle\xi,(\alpha_V^{-3}\beta^{-4})\rho(\alpha^2\beta^4(z))\rho\big(\alpha\beta^3(x)\cdot\alpha^2\beta^2(y)\big)(u)\rangle\\=& \sum_{x,y,z}(-1)^{|y||z|}(-1)^{|\xi|(|x|+|y|+|z|)}\langle\xi,\rho(\alpha^{-1}(z))\rho\big(\alpha^{-2}\beta^{-1}(x)\cdot\alpha^{-1}\beta^{-2}(y)\big)(\alpha_V^{-3}\beta^{-4})(u)\rangle. 
\end{align*}
We take $X=\alpha^{-3}\beta^{-2}(x),\;Y=\alpha^{-3}\beta^{-2}(y)$ and $Z=\alpha^{-3}\beta^{-2}(z)$, then we have
\begin{align*}
\langle\sum_{x,y,z}&(-1)^{|x||z|}\rho^\star\big(\alpha\beta^2(x)\cdot\alpha^2\beta(y)\big)\rho^\star(\alpha^2\beta(z))(\alpha_V^{-3})^*(\xi),u\rangle\\=& \sum_{x,y,z}(-1)^{|y||z|}(-1)^{|\xi|(|x|+|y|+|z|)}\langle\xi,\rho(\alpha^2\beta^2(Z))\rho\big(\alpha\beta(X)\cdot\alpha^2(Y)\big)(\alpha_V^{-3}\beta_V^{-4})(u)\rangle\\=& \sum_{x,y,z}(-1)^{|x||z|}(-1)^{|\xi|(|x|+|y|+|z|)}\langle\xi,\rho(\alpha^2\beta^2(X))\rho\big(\alpha\beta(Y)\cdot\alpha^2(Z)\big)(\alpha_V^{-3}\beta_V^{-4})(u)\rangle\\\stackrel{\eqref{con-rep-J-3}}{=}& \sum_{x,y,z}(-1)^{|x||z|}(-1)^{|\xi|(|x|+|y|+|z|)}\langle\xi,\rho\big(\alpha\beta^2(X)\cdot\alpha^2\beta(Y)\big)\rho(\alpha^2\beta(Z))(\alpha_V^{-3}\beta_V^{-4})(u)\rangle\\\stackrel{\eqref{con-rep-J-2}}{=}&(-1)^{|x||z|}(-1)^{|\xi|(|x|+|y|+|z|)}\langle\xi,\rho\big((\beta^2(X)\cdot\alpha\beta(Y))\cdot\alpha^2\beta(Z)\big)(\alpha_V^{-3}\beta^{-3})(u)\rangle\\&+(-1)^{|x|(|y|+|z|)}(-1)^{|\xi|(|x|+|y|+|z|)}\langle\xi,\rho(\alpha^2\beta^2(X))\rho(\alpha^2\beta(Z))\rho(\alpha^2(Y))(\alpha_V^{-3}\beta_V^{-5})(u)\rangle\\&+(-1)^{|x||y|}(-1)^{|\xi|(|x|+|y|+|z|)}\langle\xi,\rho(\alpha^2\beta^2(Y))\rho(\alpha^2\beta(Z))\rho(\alpha^2(X))(\alpha_V^{-3}\beta_V^{-5})(u)\rangle \\=&(-1)^{|x||z|}\langle(\alpha_V^{-3}\beta^{-3})^*\widetilde{\rho}\big((\beta^2(X)\cdot\alpha\beta(Y))\cdot\alpha^2\beta(Z)\big)(\xi),u\rangle\\&+(-1)^{|y||z|}\langle(\alpha_V^{-3}\beta_V^{-5})^*\widetilde{\rho}(\alpha^2(Y))\widetilde{\rho}(\alpha^2\beta(Z))\widetilde{\rho}(\alpha^2\beta^2(X))(\xi),u\rangle\\&+(-1)^{|z|(|x|+|y|)}\langle(\alpha_V^{-3}\beta_V^{-5})^*\widetilde{\rho}(\alpha^2(X))\widetilde{\rho}(\alpha^2\beta(Z))\widetilde{\rho}(\alpha^2\beta^2(Y))(\xi),u\rangle\\=&(-1)^{|x||z|}\langle\widetilde{\rho}\big((\alpha^3\beta^5(X)\cdot\alpha^4\beta^4(Y))\cdot\alpha^5\beta^4(Z)\big)(\alpha_V^{-3}\beta^{-3})^*(\xi),u\rangle\\&+(-1)^{|y||z|}\langle\widetilde{\rho}(\alpha^5\beta^5(Y))(\beta_V^{-2})^*\widetilde{\rho}(\alpha^5\beta^4(Z))(\beta_V^{-2})^*\widetilde{\rho}(\alpha^5\beta^3(X))(\alpha_V^{-3}\beta_V^{-1})^*(\xi),u\rangle\\&+(-1)^{|z|(|x|+|y|)}\langle\widetilde{\rho}(\alpha^5\beta^5(x))(\beta_V^{-2})^*\widetilde{\rho}(\alpha^5\beta^4(Z))(\beta_V^{-2})^*\widetilde{\rho}(\alpha^5\beta^3(y))(\alpha_V^{-3}\beta_V^{-1})^*(\xi),u\rangle\\=& (-1)^{|x||z|}\langle\rho^\star\big((\alpha^3\beta^4(X)\cdot\alpha^4\beta^3(Y))\cdot\alpha^5\beta^3(Z)\big)(\alpha_V^{-3}\beta^{-1})^*(\xi),u\rangle\\&+(-1)^{|y||z|}\langle\rho^\star(\alpha^5\beta^4(Y))\rho^\star(\alpha^5\beta^3(Z))\widetilde{\rho}(\alpha^5\beta^2(X))(\alpha_V^{-3}\beta_V)^*(\xi),u\rangle\\&+(-1)^{|z|(|x|+|y|)}\langle\rho^\star(\alpha^5\beta^4(X))\rho^\star(\alpha^5\beta^3(Z))\widetilde{\rho}(\alpha^5\beta^2(Y))(\alpha_V^{-3}\beta_V)^*(\xi),u\rangle\\=& (-1)^{|x||z|}\langle\rho^\star\big((\beta^2(x)\cdot\alpha\beta(y))\cdot\alpha^2\beta(z)\big)(\alpha_V^{-3}\beta^{-1})^*(\xi),u\rangle\\&+(-1)^{|y||z|}\langle\rho^\star(\alpha^2\beta^2(y))\rho^\star(\alpha^2\beta(z))\widetilde{\rho}(\alpha^2(x))(\alpha_V^{-3}\beta_V)^*(\xi),u\rangle\\&+(-1)^{|z|(|x|+|y|)}\langle\rho^\star(\alpha^2\beta^2(x))\rho^\star(\alpha^2\beta(z))\widetilde{\rho}(\alpha^2(y))(\alpha_V^{-3}\beta_V)^*(\xi),u\rangle.  
\end{align*}
Then $\rho^\star$ satisfying \eqref{con-rep-J-2}. Similarly, we can show that, $\rho^\star$ satisfying \eqref{con-rep-J-3}, which gives the proof.
\end{proof}
\begin{lem}\label{dual-dual-Jor-repres}
Let $(V,\rho,\alpha_v,\beta_V)$ be a representation of a multiplicative regular Bihom-Jordan superalgebra $(\mathfrak{J},\cdot,\alpha,\beta)$. Then we have $$(\rho^\star)^\star=\rho.$$ 
\end{lem}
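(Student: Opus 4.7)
The plan is to prove $(\rho^\star)^\star=\rho$ by unfolding the definition \eqref{def-Jor-dual-repr} twice and checking that, after the canonical graded identification $V\cong V^{**}$, the powers of $\beta$, $\beta_V$, and the signs introduced by the two applications all cancel.

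First I would check the compatibility of the ambient data. The recipe $(V,\rho,\alpha_V,\beta_V)\mapsto (V^{*},\rho^\star,(\alpha_V^{-1})^{*},(\beta_V^{-1})^{*})$, applied a second time, produces a representation on $V^{**}$ whose twists are $\bigl(((\alpha_V^{-1})^{*})^{-1}\bigr)^{*}=(\alpha_V^{*})^{*}$ and $\bigl(((\beta_V^{-1})^{*})^{-1}\bigr)^{*}=(\beta_V^{*})^{*}$. Under the canonical graded evaluation $\mathrm{ev}:V\to V^{**}$ sending $u$ to $\xi\mapsto (-1)^{|u||\xi|}\xi(u)$, these correspond exactly to $\alpha_V$ and $\beta_V$ on $V$, so the underlying BiHom data of $(\rho^\star)^\star$ and of $\rho$ already agree. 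It therefore suffices to show that the two linear maps $\mathfrak J\to\mathfrak{gl}(V)$ themselves coincide.

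Next, I would fix homogeneous $x\in\mathfrak J$, $u\in V$ (regarded as $\mathrm{ev}_u\in V^{**}$) and $\xi\in V^{*}$, and evaluate $\langle (\rho^\star)^\star(x)(\mathrm{ev}_u),\xi\rangle$ by invoking \eqref{def-Jor-dual-repr} with the pair $(\rho,\beta_V)$ replaced by $(\rho^\star,(\beta_V^{-1})^{*})$. This rewrites the pairing as $\langle \mathrm{ev}_u,\rho^\star(\beta^{-1}(x))\bigl(((\beta_V^{-1})^{*})^{-2}(\xi)\bigr)\rangle$ up to an overall sign coming from $\mathrm{ev}$ and from $\widetilde{\rho^\star}$. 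Since $\bigl((\beta_V^{-1})^{*}\bigr)^{-2}=(\beta_V^{2})^{*}$ acts on $\xi$ by $\xi\mapsto \xi\circ\beta_V^{2}$, a second application of \eqref{def-Jor-dual-repr} re-expresses the whole quantity in terms of $\rho$ evaluated at a further shifted argument and composed with $\beta_V^{\pm 2}$. The commutation relations \eqref{con-rep-J-1} then let me slide $\beta_V^{2}$ through $\rho$ at the cost of transporting a power of $\beta$ onto $x$; after collecting exponents the $\beta$'s and $\beta_V$'s cancel, and one is left with $\langle \xi,\rho(x)(u)\rangle$, i.e.\ $(\rho^\star)^\star(x)(u)=\rho(x)(u)$ under the identification $V\cong V^{**}$.

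The only real obstacle is the bookkeeping: tracking the $(-1)^{|x||\xi|}$ factors produced by each application of $\widetilde{(\,\cdot\,)}$ together with the super-sign carried by $\mathrm{ev}_u$, and verifying that their product is $+1$; and similarly checking that the exponents of $\beta$ and $\beta_V$ accumulated through \eqref{con-rep-J-1} collapse to zero. Both checks are purely mechanical once the identifications are fixed, and no further structural input beyond \eqref{con-rep-J-1} and \eqref{def-Jor-dual-repr} is required.
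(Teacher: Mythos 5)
Your plan follows essentially the same route as the paper's proof: the paper also identifies $V^{**}$ with $V$, unfolds the definition \eqref{def-Jor-dual-repr} twice to get $\langle(\rho^\star)^\star(x)(u),\xi\rangle=(-1)^{|u||x|}\langle\beta_V^2(u),\widetilde{\rho}(\beta^2(x))(\beta_V^{-2})^*(\xi)\rangle$, and then uses \eqref{con-rep-J-1} to collapse $\beta_V^{-2}\rho(\beta^2(x))\beta_V^2$ to $\rho(x)$. The only deviation is the exponent in your intermediate expression ($\rho^\star(\beta^{-1}(x))$ should be $\rho^\star(\beta(x))$, since \eqref{def-Jor-dual-repr} inserts a positive power of $\beta$), which falls under the bookkeeping you explicitly flag and does not affect the final cancellation.
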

\begin{proof}
Let $x\in\mathcal{H}(\mathfrak{J}),\;\xi\in V^*$ and $u\in\mathcal{H}(V)$. Then, we have 
\begin{align*}
<(\rho^\star)^\star(x)(u),\xi>&=<\widetilde{\rho^\star}(\beta(x))(\beta_V^2(u)),\xi>\\
&=(-1)^{|u||x|}<\beta_V^2(u),\rho^\star(\beta(x))(\xi)>\\&=(-1)^{|u||x|}<\beta_V^2(u),\widetilde{\rho}(\beta^2(x))(\beta_V^{-2})^*(\xi)>\\&=<\beta_V^{-2}\rho(\beta^2(x))(\beta_V^2)(u),\xi>\\&=<\rho(x)(u),\xi>,    
\end{align*}
which implies that $(\rho^\star)^\star=\rho$.
\end{proof}
\begin{cor}
Let $(\mathfrak J,\cdot,\alpha,\beta)$ be a BiHom-Jordan superalgebra. Then $ad^\star:\mathfrak J\to\mathfrak{gl}(\mathfrak J^\ast)$ defined by
\begin{equation}\label{dual-adj-rep}
 ad^\star(x)(\xi):= \widetilde{ad}(\beta(x))(\beta^{-2})^\ast(\xi),\;\forall x\in\mathcal{H}(\mathfrak J),\;\xi\in\mathcal{H}(\mathfrak J^\ast),
\end{equation}
is a representation of the BiHom-Jordan superalgebra $(\mathfrak J,\cdot,\alpha,\beta)$ on $\mathfrak J^\ast$ with respect to $((\alpha^{-1})^\ast,(\beta^{-1})^\ast)$ called coadjoint representation.
\end{cor}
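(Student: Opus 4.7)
The plan is to obtain this result as an immediate corollary of the preceding theorem on dual representations, specialized to the adjoint representation. Recall from the earlier example that for any BiHom-Jordan superalgebra $(\mathfrak J,\cdot,\alpha,\beta)$, the triple $(\mathfrak J, ad, \alpha, \beta)$ is a representation, where $ad(x)(y) = x \cdot y$. Thus $\mathfrak{J}$ itself, viewed as a $\mathbb{Z}_2$-graded vector space, carries a representation structure via $ad$ with twisting maps $\alpha_V = \alpha$ and $\beta_V = \beta$.

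The next step is to apply the theorem on dual representations with the substitution $V = \mathfrak J$, $\rho = ad$, $\alpha_V = \alpha$, $\beta_V = \beta$. By that theorem, the quadruple
\[
\bigl(\mathfrak{J}^\ast,\, ad^\star,\, (\alpha^{-1})^\ast,\, (\beta^{-1})^\ast\bigr)
\]
is a representation of $(\mathfrak J,\cdot,\alpha,\beta)$, where $ad^\star$ is defined exactly by the formula
\[
ad^\star(x)(\xi) = \widetilde{ad}(\beta(x))(\beta^{-2})^\ast(\xi),
\]
which matches \eqref{dual-adj-rep} by unwinding the definition of $\rho^\star$ in \eqref{def-Jor-dual-repr}. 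In particular, all of conditions \eqref{con-rep-J-1}, \eqref{con-rep-J-2}, and \eqref{con-rep-J-3} for $ad^\star$ are automatically verified without further computation.

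The only point requiring a moment's care, and the closest thing to an obstacle in this otherwise mechanical derivation, is checking that the hypotheses of the dual representation theorem are genuinely met by $(ad,\alpha,\beta)$: namely that $(\mathfrak J,\cdot,\alpha,\beta)$ is multiplicative and regular so that $(\beta^{-1})^\ast$ and $(\alpha^{-1})^\ast$ are well-defined on $\mathfrak J^\ast$ and the compatibility identities hold. Under these standing assumptions, which are inherited from the theorem, the corollary follows at once.
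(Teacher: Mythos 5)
Your proposal is correct and matches the paper's (implicit) argument exactly: the corollary is obtained by specializing the dual-representation theorem to the adjoint representation $(\mathfrak J, ad, \alpha, \beta)$, with $\rho^\star$ becoming $ad^\star$ as in \eqref{dual-adj-rep}. Your added remark that multiplicativity and regularity must be assumed so that $(\alpha^{-1})^\ast$ and $(\beta^{-1})^\ast$ make sense is a worthwhile precision, since the corollary's statement omits these standing hypotheses.
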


Using the coadjoint representation $ad^\star$, we can obtain a semidirect product BiHom-Jordan superalgebra structure on $\mathfrak J\oplus\mathfrak J^\ast$.
\begin{cor}
Let $(\mathfrak J,\cdot,\alpha,\beta)$ be a BiHom-Jordan superalgebra. Then, the quadruple $(\mathfrak J\oplus\mathfrak J^\ast,\bullet_{\mathfrak J\oplus\mathfrak J^\ast},\alpha\oplus(\alpha^{-1})^\ast,\beta\oplus(\beta^{-1})^\ast)$ is a BiHom-Jordan superalgebra, where the product $\bullet_{\mathfrak J\oplus\mathfrak J^\ast}$ is defined by 
\begin{equation}\label{struc-sem-dir-BiH-Jor-ad}
 \bullet_{\mathfrak J\oplus\mathfrak J^\ast}(x+\xi,y+\eta)=x\cdot y+\rho^\star(x)(\eta)+(-1)^{|\xi||y|}\rho^\star(y)(\xi),   
\end{equation}
for all $x,y\in\mathcal{H}(\mathfrak J),\;\xi,\eta\in\mathcal{H}(\mathfrak J^\ast)$.
\end{cor}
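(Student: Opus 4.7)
The plan is to derive this semidirect product BiHom-Jordan superalgebra structure as a direct application of two earlier results in the paper: the coadjoint-representation corollary immediately preceding the statement, which asserts that $(\mathfrak J^\ast,ad^\star,(\alpha^{-1})^\ast,(\beta^{-1})^\ast)$ is a representation of $(\mathfrak J,\cdot,\alpha,\beta)$, and the semidirect-product Theorem~\ref{pr-semi-direct-BiH-Jord-Bim}, which turns any BiHom-super-bimodule into a BiHom-Jordan superalgebra on $\mathfrak J\oplus V$.

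First I would specialize Theorem~\ref{pr-semi-direct-BiH-Jord-Bim} to $V=\mathfrak J^\ast$, $\alpha_V=(\alpha^{-1})^\ast$ and $\beta_V=(\beta^{-1})^\ast$, extracting from the representation $ad^\star$ the bimodule data $(\mathfrak l,\mathfrak r)$ by taking $\mathfrak l=ad^\star$ and defining $\mathfrak r$ via the symmetry relation \eqref{bim-BiH-Jor-equiv-repr}, namely $\mathfrak l(x)\xi=\mathfrak r(\alpha\beta^{-1}(x))(\alpha_V^{-1}\beta_V)(\xi)$. Since the preceding corollary already verifies \eqref{con-rep-J-1}--\eqref{con-rep-J-3} for $ad^\star$, the standard representation/bimodule dictionary (established just after Theorem~\ref{pr-semi-direct-BiH-Jord-Bim}) ensures that the resulting $(\mathfrak l,\mathfrak r)$ satisfies the bimodule axioms \eqref{con-bimod-J-1}, \eqref{con-bimod-J-4}, \eqref{con-bimod-J-5}. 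Applying the theorem then produces a BiHom-Jordan superalgebra structure on $\mathfrak J\oplus\mathfrak J^\ast$ with the twists $\alpha\oplus(\alpha^{-1})^\ast$ and $\beta\oplus(\beta^{-1})^\ast$.

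The remaining task is to identify the resulting product, coming from formula \eqref{str-pr-semi-dir3}, with the product $\bullet_{\mathfrak J\oplus\mathfrak J^\ast}$ displayed in \eqref{struc-sem-dir-BiH-Jor-ad}. This boils down to showing that $\mathfrak r(y)\xi$ can be rewritten as $\rho^\star(y)\xi$ after absorbing the twists $\alpha_V=(\alpha^{-1})^\ast,\ \beta_V=(\beta^{-1})^\ast$; this is a direct computation combining \eqref{bim-BiH-Jor-equiv-repr} with the intertwining relations \eqref{con-rep-J-1} for $\rho^\star=ad^\star$, both of which are already available.

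The main delicacy I anticipate is \emph{not} proving the Jordan super-identity, which is given for free by Theorem~\ref{pr-semi-direct-BiH-Jord-Bim}, but rather the bookkeeping of the four twists $\alpha,\beta,(\alpha^{-1})^\ast,(\beta^{-1})^\ast$ when collapsing the bimodule's two actions $(\mathfrak l,\mathfrak r)$ into the single expression $\rho^\star$ in \eqref{struc-sem-dir-BiH-Jor-ad}. Once that identification is made, the statement follows at once, so the proof can be written as a one-line reference to the representation-corollary and to Theorem~\ref{pr-semi-direct-BiH-Jord-Bim}.
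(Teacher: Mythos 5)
Your overall strategy is exactly the one the paper intends: the paper itself gives no proof of this corollary beyond the remark that it follows from the coadjoint representation together with the semidirect product construction, so invoking the preceding corollary and Theorem~\ref{pr-semi-direct-BiH-Jord-Bim} is the right skeleton.

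However, the step you defer to ``a direct computation'' is precisely where the content lies, and as described it does not close. Theorem~\ref{pr-semi-direct-BiH-Jord-Bim} produces the product \eqref{str-pr-semi-dir3} from a \emph{pair} $(\mathfrak l,\mathfrak r)$ subject to \eqref{con-bimod-J-1}, equivalently \eqref{bim-BiH-Jor-equiv-repr}. Taking $\mathfrak l=ad^\star$ and solving \eqref{bim-BiH-Jor-equiv-repr} for the right action gives
$\mathfrak r(y)\xi=ad^\star\big(\alpha^{-1}\beta(y)\big)\big(\alpha_V\beta_V^{-1}\big)(\xi)$ with $\alpha_V=(\alpha^{-1})^\ast$, $\beta_V=(\beta^{-1})^\ast$. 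Pushing the twists through with the intertwining relations \eqref{con-rep-J-1} yields
$\mathfrak r(y)=\alpha_V^{-1}\beta_V\, ad^\star(y)\,\alpha_V^{2}\beta_V^{-2}$, which is \emph{not} $ad^\star(y)$ unless extra commutation hypotheses hold (e.g.\ $\alpha=\beta$, or $ad^\star(y)$ commuting suitably with $\alpha_V\beta_V^{-1}$). So either the semidirect product you obtain from the theorem carries the twisted right action rather than the one displayed in \eqref{struc-sem-dir-BiH-Jor-ad}, or the identification must be justified by an argument you have not supplied; ``absorbing the twists'' does not do it. To repair the proof you must either verify directly that the product \eqref{struc-sem-dir-BiH-Jor-ad} as literally written satisfies \eqref{J-BiH-skewsym-cond} and \eqref{BiH-Jor-identity} (the BiHom-super-symmetry already fails to be automatic: it requires $\rho^\star(\beta(x))\alpha_V(\xi)=\rho^\star(\alpha(x))\beta_V(\xi)$, which is not a consequence of \eqref{con-rep-J-1} alone), or state the product in the form actually delivered by Theorem~\ref{pr-semi-direct-BiH-Jord-Bim}. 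This defect is arguably inherited from the statement itself, but your proof as written papers over it rather than resolving it.
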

\section{BiHom-pre-Jordan superalgebras and homogeneous $\mathcal{O}$-operators}\label{Sec4}
\subsection{BiHom-pre-Jordan superalgebras }
\begin{defn}
A BiHom-pre-Jordan superalgebra is a quadruple $(\mathfrak J,\circ,\alpha,\beta)$ consisting of a $\mathbb Z_2$-graded vector space $\mathfrak J$, an even bilinear map $\circ:\mathfrak J\times\
\mathfrak J\to \mathfrak J$ and two even commuting linear maps $\alpha,\beta:\mathfrak J\to\mathfrak J$ such that the following conditions hold:
\begin{equation}\label{con-pre-Jord-1}
\alpha(x\circ y)=(\alpha(x)\circ \alpha(y))~~~~\text{and}~~~~
\beta(x\circ y)=(\beta(x)\circ \beta(y)),
\end{equation}
\begin{align}
\sum_{x,y,z}(-1)^{|x||z|}\big(\alpha\beta^2(x)\ast\alpha^2\beta(y)\big)\circ(\alpha^2\beta(z)\circ\alpha^3(t))=&(-1)^{|x||z|} \big((\beta^2(x)\ast\alpha\beta(y))\ast\alpha^2\beta(z)\big)\circ\alpha^3\beta(t)\label{con-BiH-pre-Jord-2}\\&+(-1)^{|x|(|y|+|z|)}(\alpha^2\beta^2(x))\circ\big(\alpha^2\beta(z)\circ(\alpha^2(y)\circ\alpha^3\beta^{-1}(t))\big)\nonumber\\
&+(-1)^{|x||y|}(\alpha^2\beta^2(y))\circ\big(\alpha^2\beta(z)\circ(\alpha^2(x)\circ\alpha^3\beta^{-1}(t))\big),\nonumber\\
\sum_{x,y,z}(-1)^{|x||z|}\alpha^2\beta^2(x)\circ\big((\alpha\beta(y)\ast\alpha^2(z))\circ\alpha^3(t)\big)
=&\sum_{x,y,z}(-1)^{|x||z|}\big(\alpha\beta^2(x)\ast\alpha^2\beta(y)\big)\circ(\alpha^2\beta(z)\circ\alpha^3(t))\label{con-BiH-pre-Jord-3},
\end{align}
for any $x,y,z,t\in \mathcal{H}(\mathfrak{J})$, where  $\displaystyle\sum_{x,y,z}$ denotes the cyclic sum over $(x,y,z)$ and the bilinear map $"\ast"$ is defined by 
\begin{equation}\label{BiH-pre-Jor-to-BiH-Jord}
x\ast y=x\circ y+(-1)^{|x||y|}\alpha^{-1}\beta(y)\circ\alpha\beta^{-1}(x).
\end{equation}
\end{defn}
\begin{thm}
 Let $(\mathfrak J,\circ,\alpha,\beta)$ be a BiHom-pre-Jordan superalgebra, then, the blinear map defined by Eq. \eqref{BiH-pre-Jor-to-BiH-Jord} defines on $\mathfrak J$ a BiHom-Jordan superalgebra structure which is called the associated BiHom-Jordan
superalgebra of $(\mathfrak J, \circ,\alpha,\beta)$ and $(\mathfrak J, \circ,\alpha,\beta)$ is called a compatible BiHom-pre-Jordan superalgebra structure of the BiHom-Jordan superalgebra $(\mathfrak J,\ast,\alpha,\beta)$.   
\end{thm}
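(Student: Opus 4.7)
The plan is to verify the two defining axioms of a BiHom-Jordan superalgebra for $(\mathfrak J,\ast,\alpha,\beta)$ separately. First, I would check the BiHom-super-symmetry condition \eqref{J-BiH-skewsym-cond}, which is the cheap part: applying the definition \eqref{BiH-pre-Jor-to-BiH-Jord} to $\beta(x)\ast\alpha(y)$ gives $\beta(x)\circ\alpha(y)+(-1)^{|x||y|}\beta(y)\circ\alpha(x)$ (after using the commutativity of $\alpha,\beta$ and \eqref{con-pre-Jord-1}), and the right-hand side $(-1)^{|x||y|}\beta(y)\ast\alpha(x)$ expands to the same two monomials swapped. Multiplicativity of $\alpha$ and $\beta$ with respect to $\ast$ also follows immediately from \eqref{con-pre-Jord-1}, since each of the two summands in $x\ast y$ is preserved by $\alpha$ and by $\beta$.

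The bulk of the work is the BiHom-Jordan super-identity \eqref{BiH-Jor-identity} for $\ast$. The idea is to expand every occurrence of $\ast$ via \eqref{BiH-pre-Jor-to-BiH-Jord} so that the associator $ass_{\alpha,\beta}(\beta^2(x)\ast\alpha\beta(y),\alpha^2\beta(z),\alpha^3(t))$ becomes a sum of products built only from $\circ$. Concretely, I would first expand $\beta^2(x)\ast\alpha\beta(y)=\beta^2(x)\circ\alpha\beta(y)+(-1)^{|x||y|}\beta^2(y)\circ\alpha\beta(x)$, and then expand each of the remaining $\ast$'s, using \eqref{con-pre-Jord-1} to move $\alpha$ and $\beta$ through products. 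This produces a finite family of $\circ$-monomials in $\beta^2(x),\alpha\beta(y),\alpha^2\beta(z),\alpha^3(t)$ (with appropriate twist shifts coming from the $\alpha^{-1}\beta$, $\alpha\beta^{-1}$ in \eqref{BiH-pre-Jor-to-BiH-Jord}), together with Koszul signs depending on the parities.

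Next, I would take the signed cyclic sum $\circlearrowleft_{x,y,t}(-1)^{|t|(|x|+|z|)}$ of this expansion. The resulting collection of $\circ$-monomials is organised by the position of $z$: terms where $z$ occupies the ``middle'' slot match the pattern of the left-hand side of \eqref{con-BiH-pre-Jord-2}, terms where $z$ appears inside a right-most factor match the pattern of \eqref{con-BiH-pre-Jord-3}, and the remaining terms are symmetric images under the exchange of $x,y,t$ that should be absorbed when the cyclic sum is taken. Applying \eqref{con-BiH-pre-Jord-2} converts one of the $\ast$-built blocks into a sum of three $\circ$-built blocks that precisely match three of the raw summands, and \eqref{con-BiH-pre-Jord-3} reconciles the leftover ``$\ast$ appears inside a nested product'' terms with their $\circ$-counterparts; the Koszul signs line up because the sign rearrangements in the cyclic sum are exactly those encoded in the two pre-Jordan axioms.

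The main obstacle is purely combinatorial bookkeeping: the associator of $\ast$ expands into on the order of sixteen $\circ$-monomials, each carrying a sign and an $\alpha^s\beta^t$-twist, and the cyclic sum triples this count. The danger is getting a sign or a twist exponent wrong; to keep this under control I would label each summand by the triple (permutation of $(x,y,t)$, position of $z$, which of the two factors of each $\ast$ was chosen), then match the three groups of terms one-to-one with the three groups appearing in \eqref{con-BiH-pre-Jord-2} and \eqref{con-BiH-pre-Jord-3} after the substitutions $x\mapsto\beta^2(x)$, $y\mapsto\alpha\beta(y)$, etc. Once the identification is made, the identity reduces to the two pre-Jordan axioms and the vanishing is automatic, giving the desired BiHom-Jordan super-identity for $\ast$.
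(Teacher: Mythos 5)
Your proposal is correct and follows essentially the same route as the paper: the super-symmetry of $\ast$ is checked directly from \eqref{BiH-pre-Jor-to-BiH-Jord}, and the BiHom-Jordan super-identity is obtained by expanding the signed cyclic sum of $ass_{\alpha,\beta}$-terms via $\circ$, regrouping the resulting monomials, and cancelling them in blocks using \eqref{con-BiH-pre-Jord-2} (for the groups where $z$ sits in the middle slot, the paper's $M_i,N_i,P_i$) and \eqref{con-BiH-pre-Jord-3} (for the remaining group, the paper's $Q_i$). The only cosmetic difference is that the paper keeps some $\ast$'s unexpanded in the intermediate expressions so the two axioms apply verbatim, whereas you expand fully and recombine afterwards.
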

\begin{proof}
The BiHom-super-symmetry of $\ast$ is obvious. Let $x,y,z,t\in\mathcal{H}(\mathfrak J)$, then
\begin{align*}
\displaystyle\circlearrowleft_{x,y,t}(-1)^{|t|(|x|+|z|)}ass_{\alpha,\beta}(\beta^2(x)\ast\alpha\beta(y),\alpha^2\beta(z),\alpha^3(t))=&(-1)^{|t|(|x|+|z|)}ass_{\alpha,\beta}(\beta^2(x)\ast\alpha\beta(y),\alpha^2\beta(z),\alpha^3(t))\\&+(-1)^{|x|(|y|+|z|)}ass_{\alpha,\beta}(\beta^2(y)\ast\alpha\beta(t),\alpha^2\beta(z),\alpha^3(x))\\&+(-1)^{|y|(|t|+|z|)}ass_{\alpha,\beta}(\beta^2(t)\ast\alpha\beta(x),\alpha^2\beta(z),\alpha^3(y)).
\end{align*}
By using Eq. \eqref{BiH-pre-Jor-to-BiH-Jord}, a direct computation gives
\begin{align*}
M=&ass_{\alpha,\beta}(\beta^2(x)\ast\alpha\beta(y),\alpha^2\beta(z),\alpha^3(t))\\=&\big((\beta^2(x)\ast\alpha\beta(y))\ast\alpha^2\beta(z)\big)\ast\alpha^3\beta(t)-(\alpha\beta^2(x)\ast\alpha^2\beta(y))\ast(\alpha^2\beta(z)\ast\alpha^3(t))\\=&\big((\beta^2(x)\ast\alpha\beta(y))\ast\alpha^2\beta(z)\big)\circ\alpha^3\beta(t)+(-1)^{|t|(|x|+|y|+|z|)}\alpha^2\beta^2(t)\circ\big((\alpha\beta(x)\ast\alpha^2(y))\ast\alpha^3(z)\big)\\&-(\alpha\beta^2(x)\ast\alpha^2\beta(y))\circ(\alpha^2\beta(z)\ast\alpha^3(t))-(-1)^{(|x|+|y|)(|z|+|t|)}(\alpha\beta^2(z)\ast\alpha^2\beta(t))\circ(\alpha^2\beta(x)\ast\alpha^3(y))\\=&\big((\beta^2(x)\ast\alpha\beta(y))\ast\alpha^2\beta(z)\big)\circ\alpha^3\beta(t)+(-1)^{|t|(|x|+|y|+|z|)}\alpha^2\beta^2(t)\circ\big((\alpha\beta(x)\ast\alpha^2(y))\circ\alpha^3(z))\big)\\&+(-1)^{(|x|+|y|)(|z|+|t|)+|z||t|}\alpha^2\beta^2(t)\circ\big(\alpha^2\beta(z)\circ(\alpha^2(x)\circ\alpha^3\beta^{-1}(y))\big)\\&+(-1)^{(|x|+|y|)(|z|+|t|)}(-1)^{|x||y|+|z||t|}\alpha^2\beta^2(t)\circ\big(\alpha^2\beta(z)\circ(\alpha^2(y)\circ\alpha^3\beta^{-1}(x))\big)\\&-(\alpha\beta^2(x)\ast\alpha^2\beta(y))\circ(\alpha^2\beta(z)\circ\alpha^3(t))-(-1)^{|z||t|}(\alpha\beta^2(x)\ast\alpha^2\beta(y))\circ(\alpha^2\beta(t)\circ\alpha^3(z))\\&-(-1)^{(|x|+|y|)(|z|+|t|)}(\alpha\beta^2(z)\ast\alpha^2\beta(t))\circ(\alpha^2\beta(x)\circ\alpha^3(y))\\&-(-1)^{(|x|+|y|)(|z|+|t|)+|x||y|}(\alpha\beta^2(z)\ast\alpha^2\beta(t))\circ(\alpha^2\beta(y)\circ\alpha^3(x)).
\end{align*}
Similarly, we have
\begin{align*}
N=&ass_{\alpha,\beta}(\beta^2(y)\ast\alpha\beta(t),\alpha^2\beta(z),\alpha^3(x))\\=&\big((\beta^2(y)\ast\alpha\beta(t))\ast\alpha^2\beta(z)\big)\circ\alpha^3\beta(x)+(-1)^{|x|(|y|+|t|+|z|)}\alpha^2\beta^2(x)\circ\big((\alpha\beta(y)\ast\alpha^2(t))\circ\alpha^3(z))\big)\\&+(-1)^{(|y|+|t|)(|z|+|x|)+|z||x|}\alpha^2\beta^2(x)\circ\big(\alpha^2\beta(z)\circ(\alpha^2(y)\circ\alpha^3\beta^{-1}(t))\big)\\&+(-1)^{(|y|+|t|)(|z|+|x|)}(-1)^{|y||t|+|z||x|}\alpha^2\beta^2(x)\circ\big(\alpha^2\beta(z)\circ(\alpha^2(t)\circ\alpha^3\beta^{-1}(y))\big)\\&-(\alpha\beta^2(y)\ast\alpha^2\beta(t))\circ(\alpha^2\beta(z)\circ\alpha^3(x))-(-1)^{|z||x|}(\alpha\beta^2(y)\ast\alpha^2\beta(t))\circ(\alpha^2\beta(x)\circ\alpha^3(z))\\&-(-1)^{(|y|+|t|)(|z|+|x|)}(\alpha\beta^2(z)\ast\alpha^2\beta(x))\circ(\alpha^2\beta(y)\circ\alpha^3(t))\\&-(-1)^{(|y|+|t|)(|z|+|x|)+|y||t|}(\alpha\beta^2(z)\ast\alpha^2\beta(x))\circ(\alpha^2\beta(t)\circ\alpha^3(y)), 
\end{align*}
and
\begin{align*}
P=&ass_{\alpha,\beta}(\beta^2(t)\ast\alpha\beta(x),\alpha^2\beta(z),\alpha^3(y))\\=&\big((\beta^2(t)\ast\alpha\beta(x))\ast\alpha^2\beta(z)\big)\circ\alpha^3\beta(y)+(-1)^{|y|(|x|+|t|+|z|)}\alpha^2\beta^2(y)\circ\big((\alpha\beta(t)\ast\alpha^2(x))\circ\alpha^3(z))\big)\\&+(-1)^{(|x|+|t|)(|z|+|y|)+|z||y|}\alpha^2\beta^2(y)\circ\big(\alpha^2\beta(z)\circ(\alpha^2(t)\circ\alpha^3\beta^{-1}(x))\big)\\&+(-1)^{(|x|+|t|)(|z|+|y|)}(-1)^{|x||t|+|z||y|}\alpha^2\beta^2(y)\circ\big(\alpha^2\beta(z)\circ(\alpha^2(x)\circ\alpha^3\beta^{-1}(t))\big)\\&-(\alpha\beta^2(t)\ast\alpha^2\beta(x))\circ(\alpha^2\beta(z)\circ\alpha^3(y))-(-1)^{|z||y|}(\alpha\beta^2(t)\ast\alpha^2\beta(x))\circ(\alpha^2\beta(y)\circ\alpha^3(z))\\&-(-1)^{(|x|+|t|)(|z|+|y|)}(\alpha\beta^2(z)\ast\alpha^2\beta(y))\circ(\alpha^2\beta(t)\circ\alpha^3(x))\\&-(-1)^{(|x|+|t|)(|z|+|y|)+|x||t|}(\alpha\beta^2(z)\ast\alpha^2\beta(y))\circ(\alpha^2\beta(x)\circ\alpha^3(t)).
\end{align*}
Then,
\begin{align*}
\displaystyle\circlearrowleft_{x,y,t}(-1)^{|t|(|x|+|z|)}ass_{\alpha,\beta}(\beta^2(x)\ast\alpha\beta(y),\alpha^2\beta(z),\alpha^3(t))=&(M_1-M_2)+(N_1-N_2)+(P_1-P_2)+(Q_1-Q_2),
\end{align*}
where,
\begin{align*}
 M_1=& (-1)^{|t|(|x|+|z|)}\big((\beta^2(x)\ast\alpha\beta(y))\ast\alpha^2\beta(z)\big)\circ\alpha^3\beta(t)+(-1)^{|z|(|y|+|t|)+|x||t|}\alpha^2\beta^2(x)\circ\big(\alpha^2\beta(z)\circ(\alpha^2(y)\circ\alpha^3\beta^{-1}(t))\big)\\&+(-1)^{|x|(|y|+|z|+|t|)+|z||t|}\alpha^2\beta^2(y)\circ\big(\alpha^2\beta(z)\circ(\alpha^2(x)\circ\alpha^3\beta^{-1}(t))\big),  
\end{align*}
\begin{align*}
 M_2=&(-1)^{|t|(|x|+|z|)}(\alpha\beta^2(x)\ast\alpha^2\beta(y))\circ(\alpha^2\beta(z)\circ\alpha^3(t))+(-1)^{|z|(|x|+|y|+|t|)+|x||t|}(\alpha\beta^2(z)\ast\alpha^2\beta(x))\circ(\alpha^2\beta(y)\circ\alpha^3(t))\\&+(-1)^{|x|(|y|+|z|+|t|)}(-1)^{|z|(|y|+|t|)}(\alpha\beta^2(z)\ast\alpha^2\beta(y))\circ(\alpha^2\beta(x)\circ\alpha^3(t)),   
\end{align*}
\begin{align*}
N_1=&(-1)^{|y|(|t|+|z|)}\big((\beta^2(t)\ast\alpha\beta(x))\ast\alpha^2\beta(z)\big)\circ\alpha^3\beta(y)+ (-1)^{|z|(|x|+|y|)+|t||y|}\alpha^2\beta^2(t)\circ\big(\alpha^2\beta(z)\circ(\alpha^2(x)\circ\alpha^3\beta^{-1}(y))\\&+ (-1)^{|t|(|x|+|y|+|z|)+|y||z|}\alpha^2\beta^2(x)\circ\big(\alpha^2\beta(z)\circ(\alpha^2(t)\circ\alpha^3\beta^{-1}(y))\big),  
\end{align*}
\begin{align*}
N_2=&(-1)^{|y|(|t|+|z|)}(\alpha\beta^2(t)\ast\alpha^2\beta(x))\circ(\alpha^2\beta(z)\circ\alpha^3(y))+(-1)^{|t|(|y|+|z|)+|y||z|}(\alpha\beta^2(z)\ast\alpha^2\beta(t))\circ(\alpha^2\beta(x)\circ\alpha^3(y))\\&+ (-1)^{|t|(|x|+|y|+|z|)}(-1)^{|z|(|x|+|y|)}(\alpha\beta^2(z)\ast\alpha^2\beta(x))\circ(\alpha^2\beta(t)\circ\alpha^3(y)),\end{align*}
\begin{align*}
 P_1=& (-1)^{|x|(|y|+|z|)}\big((\beta^2(y)\ast\alpha\beta(t))\ast\alpha^2\beta(z)\big)\circ\alpha^3\beta(x)+ (-1)^{|y|(|x|+|z|+|t|)+|x||z|}\alpha^2\beta^2(t)\circ\big(\alpha^2\beta(z)\circ(\alpha^2(y)\circ\alpha^3\beta^{-1}(x))\big)\\&+ (-1)^{|z|(|x|+|t|)+|x||y|}\alpha^2\beta^2(y)\circ\big(\alpha^2\beta(z)\circ(\alpha^2(t)\circ\alpha^3\beta^{-1}(x))\big),
\end{align*}
\begin{align*}
 P_2=& (-1)^{|x|(|y|+|z|)}(\alpha\beta^2(y)\ast\alpha^2\beta(t))\circ(\alpha^2\beta(z)\circ\alpha^3(x))\\&+(-1)^{|z|(|x|+|y|+|t|)}(-1)^{|y|(|x|+|t|)}(\alpha\beta^2(z)\ast\alpha^2\beta(t))\circ(\alpha^2\beta(y)\circ\alpha^3(x))\\&+  (-1)^{|z|(|x|+|y|+|t|)+|x||y|}(\alpha\beta^2(z)\ast\alpha^2\beta(y))\circ(\alpha^2\beta(t)\circ\alpha^3(x)),
\end{align*}
\begin{align*}
Q_1=&(-1)^{|y||t|}\alpha^2\beta^2(t)\circ\big((\alpha\beta(x)\ast\alpha^2(y))\circ\alpha^3(z))\big)+(-1)^{|x||t|}\alpha^2\beta^2(x)\circ\big((\alpha\beta(y)\ast\alpha^2(t))\circ\alpha^3(z))\big)\\&+(-1)^{|x||y|}\alpha^2\beta^2(y)\circ\big((\alpha\beta(t)\ast\alpha^2(x))\circ\alpha^3(z))\big),
\end{align*}
and
\begin{align*}
 Q_2=& (-1)^{|x||t|}(\alpha\beta^2(x)\ast\alpha^2\beta(y))\circ(\alpha^2\beta(t)\circ\alpha^3(z))+ (-1)^{|x||y|}(\alpha\beta^2(y)\ast\alpha^2\beta(t))\circ(\alpha^2\beta(x)\circ\alpha^3(z))\\+&(-1)^{|y||t|}(\alpha\beta^2(t)\ast\alpha^2\beta(x))\circ(\alpha^2\beta(y)\circ\alpha^3(z)). 
\end{align*}
By using Eq. \eqref{con-BiH-pre-Jord-2}, we get $M_1-M_2=N_1-N_2=P_1-P_2=0$ and Eq. \eqref{con-BiH-pre-Jord-3} gives that $Q_1-Q_2=0$. The Theorem is proved.
\end{proof}

\subsection{Homogeneous $\mathcal{O}$-operators of BiHom-pre-Jordan superalgebras }

In this subsection, we are ineresting to introduce the notion of homogeneous $\mathcal{O}$-operators of a multiplicative regular BiHom-Jordan superalgebras and we construct BiHom-pre-Jordan superalgebras from homogeneous $\mathcal{O}$-operators of BiHom-Jordan superalgebras.\\

\textbf{Parity reverse representations and parity dualities (\cite{Bai-Guo-Zhang}):} Let $V=V_{\overline{0}}\oplus V_{\overline{1}}$ be a $\mathbb{Z}_2$-graded vector space, we denote by $sV$ the $\mathbb{Z}_2$-graded vector space obtained by interchanging the even and odd parts of $V$,
that is, $(sV)_{\overline{0}}= V_{\overline{1}}$ and $(sV)_{\overline{1}}= V_{\overline{0}}$. The parity reverse map (or the suspension operator) is an
odd linear map $s:V\to sV$ that sends each homogeneous element of $V$ to the same element in
$sV$ but with the opposite parity, that is, $s$ sends $v\in V_j$ to an element $sv\in (sV)_{j+1}$ for all $j\in\mathbb{Z}$. In \cite{Bai-Guo-Zhang}, the authors gave the notion of the parity reverse of a representation of a Lie superalgebra,
obtained from the original representation by a suspension process. They also obtained
a natural one-one correspondence between $\mathcal{O}$-operators with any given parity associated to a
representation and $\mathcal{O}$-operators with the opposite parity associated to the parity reverse representation. In this subsection, we study these results on BiHom-Jordan superalgebras and then present some other related results.
\begin{defn}
Let $(\mathfrak J,\cdot,\alpha,\beta)$ be a BiHom-Jordan superalgebra and $(V,\rho,\alpha_V,\beta_V)$ be a representation of $\mathfrak J$. A homogeneous linear map $T:V\to\mathfrak J$ is called $\mathcal{O}$-operator of $\mathfrak J$ associated to $\rho$ if it satisfies
\begin{align}
& \alpha T=T\alpha_V\;\;\text{and}\;\;\;\beta T=T\beta_V,\label{cond-O-oper-BiH-Jor1}\\
& T(u)\cdot T(v)=T\big((-1)^{|T|(|T|+|u|)}\rho(T(u))v+(-1)^{|u|(|v|+|T|)}\rho(T(\alpha_V^{-1}\beta_V(v)))\alpha_V\beta_V^{-1}(u)\big),\;\forall u,v\in\mathcal{H}(V).\label{cond-O-oper-BiH-Jor2}
\end{align}

\end{defn}
An $\mathcal{O}$-operator is called even (res. odd) if it is an even (rep. odd) linear map. Let us denote by $\mathcal{O}_{\overline{0}}(\mathfrak J; V,\rho,\alpha_V,\beta_V)$ and $\mathcal{O}_{\overline{1}}(\mathfrak J; V,\rho,\alpha_V,\beta_V)$ respectively the sets of even and odd $\mathcal{O}$-operators on $\mathfrak J$ with respect to the representation $(V,\rho,\alpha_V,\beta_V)$.
\begin{re}
 A Rota-Baxter operator $\mathcal{R}$ of weight zero on a BiHom-Jordan superalgebra $(\mathfrak{J},\cdot,\alpha,\beta)$ is just an $\mathcal{O}$- operator associated to the representation $(\mathfrak{J},ad,\alpha,\beta)$, that is $\mathcal{R}$ commuting with $\alpha$ and $\beta$ and satisfying, for all $x,y\in\mathcal{H}(\mathfrak J)$, the following condition,
 \begin{equation}
  \mathcal{R}(x)\cdot\mathcal{R}(y)=(-1)^{|\mathcal{R}|(|x|+|\mathcal{R}|)}\mathcal{R}\big(\mathcal{R}(x)\cdot y+x\cdot\mathcal{R}(y)\big). \label{RB-op-BiH-Jor-sup}   
 \end{equation}
 \end{re}

 Now, let us consider a representation $(V,\rho,\alpha_V,\beta_V)$ of a BiHom-Jordan superalgebra $(\mathfrak J,\cdot,\alpha,\beta)$ and $T\in\mathcal{O}_{|T|}(\mathfrak J;V,\rho,\alpha_V,\beta_V)$. Define a product $\circ_V$ on the superspace $V$ by 
\begin{equation}\label{Homog-BiH-pre-Jord-sup-V-1}
u\circ_V v=(-1)^{|T|(|u|+|T|)}\rho(T(u))v,\;\forall u,v\in V.    
\end{equation}
This product $\circ_V$ is homogeneous with degree $|T|$ (i.e. $V_i\circ_V V_j\subset V_{i+j+|T|}$ for $i,j\in\mathbb{Z}_2$).
\begin{thm}
Let $(V,\rho,\alpha_V,\beta_V)$ be a representation of a BiHom-Jordan superalgebra $(\mathfrak J,\cdot,\alpha,\beta)$ and $T\in\mathcal{O}_{|T|}(\mathfrak J;V,\rho,\alpha_V,\beta_V)$. Then
\begin{enumerate}
    \item 
If $T$ is even, the product given by Eq. \eqref{Homog-BiH-pre-Jord-sup-V-1} defines on $V$ a BiHom-pre-Jordan superalgebra structure.
\item If $T$ is odd, the product $\bullet_V$ defined on $V$ by
\begin{equation}\label{Homog-BiH-pre-Jord-sup-V-2}
su\bullet_V sv=s(u\circ_V v),    
\end{equation}
defines a BiHom-pre-Jordan superalgebra structure on $sV$.
\end{enumerate}
\end{thm}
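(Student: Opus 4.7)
The plan is to verify the three BiHom-pre-Jordan axioms for $\circ_V$ directly in the even case, then to reduce the odd case to the even one via the parity shift $s$. For the multiplicativity condition \eqref{con-pre-Jord-1}, combining the equivariance $\alpha T=T\alpha_V$, $\beta T=T\beta_V$ from \eqref{cond-O-oper-BiH-Jor1} with the representation compatibility \eqref{con-rep-J-1} immediately yields $\alpha_V(u\circ_V v)=(-1)^{|T|(|u|+|T|)}\rho(\alpha T(u))\alpha_V(v)=(-1)^{|T|(|u|+|T|)}\rho(T\alpha_V(u))\alpha_V(v)=\alpha_V(u)\circ_V\alpha_V(v)$, and similarly for $\beta_V$.

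The key observation in the even case ($|T|=0$) is that the $\mathcal{O}$-operator condition \eqref{cond-O-oper-BiH-Jor2}, together with the definition \eqref{Homog-BiH-pre-Jord-sup-V-1} of $\circ_V$ and the associated-product formula \eqref{BiH-pre-Jor-to-BiH-Jord}, can be rewritten as the homomorphism identity
$$T(u\ast_V v)=T(u)\cdot T(v),$$
where $\ast_V$ is the prospective associated product of $\circ_V$ on $V$. I would exploit this homomorphism property to pull the BiHom-Jordan identity back from $\mathfrak J$ to $V$: substitute $x\mapsto T(u)$, $y\mapsto T(v)$, $z\mapsto T(w)$ in the representation axioms \eqref{con-rep-J-2} and \eqref{con-rep-J-3}, apply both sides to an arbitrary homogeneous $t\in V$, and then rewrite systematically. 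Each product of the form $T(u')\cdot T(v')$ gets replaced by $T(u'\ast_V v')$ (using the homomorphism property), each expression of the form $\rho(T(\cdot))(\cdot)$ becomes a single $\circ_V$ operation up to the sign built into \eqref{Homog-BiH-pre-Jord-sup-V-1}, and every $\alpha, \beta$ appearing inside $T$ slides out as $\alpha_V, \beta_V$ by \eqref{cond-O-oper-BiH-Jor1}. I expect this substitution to produce exactly \eqref{con-BiH-pre-Jord-2} and \eqref{con-BiH-pre-Jord-3} for $\circ_V$ on $V$. The main obstacle will be the careful bookkeeping of Koszul signs, particularly the interaction of the cyclic sign $(-1)^{|x||z|}$ with the sign $(-1)^{|T|(|u|+|T|)}$ built into $\circ_V$ and the signs produced by equivariance of $\rho$ under $\alpha_V^{\pm 1}\beta_V^{\pm 1}$; but once the substitution principle is in place, this is a mechanical (if tedious) verification.

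For part~(2), the plan is to pass to the parity-reversed picture. Define $\widetilde T: sV\to\mathfrak J$ by $\widetilde T(sv)=T(v)$; since $T$ is odd and $s$ shifts parity, $\widetilde T$ is even. Transport the data $(\rho,\alpha_V,\beta_V)$ to $sV$ via conjugation by $s$, setting $\widetilde\rho(x)=s\rho(x)s^{-1}$, $\widetilde\alpha_V=s\alpha_V s^{-1}$ and $\widetilde\beta_V=s\beta_V s^{-1}$. A direct check shows that $(sV,\widetilde\rho,\widetilde\alpha_V,\widetilde\beta_V)$ is a representation of $(\mathfrak J,\cdot,\alpha,\beta)$, that $\widetilde T$ is an even $\mathcal{O}$-operator with respect to it, and that the induced product \eqref{Homog-BiH-pre-Jord-sup-V-1} applied to $\widetilde T$ on $sV$ coincides with $\bullet_V$ as defined in \eqref{Homog-BiH-pre-Jord-sup-V-2}. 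Then applying part~(1) to $\widetilde T$ delivers the desired BiHom-pre-Jordan superalgebra structure on $sV$.
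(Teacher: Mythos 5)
Your part~(1) is essentially the paper's own argument: unpack $\circ_V$ as $\rho(T(\cdot))$, use \eqref{cond-O-oper-BiH-Jor2} to trade each product $T(u')\cdot T(v')$ for $T(u'\ast_V v')$, substitute $x=T(u),y=T(v),z=T(w)$ into \eqref{con-rep-J-2}--\eqref{con-rep-J-3} applied to an element of $V$, and slide $\alpha,\beta$ through $T$ via \eqref{cond-O-oper-BiH-Jor1}. The paper performs exactly this computation (keeping the degree $|T|$ generic throughout), so part~(1) is fine, and your restriction of the identity $T(u\ast_V v)=T(u)\cdot T(v)$ to the even case is correct --- for odd $T$ the sign in \eqref{BiH-pre-Jor-to-BiH-Jord} no longer matches the second sign in \eqref{cond-O-oper-BiH-Jor2}.

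The gap is in part~(2). Reducing the odd case to the even one through a parity shift is a legitimate and arguably cleaner route than the paper's direct re-verification on $sV$ (it is exactly the mechanism behind the parity-reverse representation and the $\mathcal{O}$-operator duality of Theorem \ref{one-one-corr-O-op}, which the paper only develops \emph{after} this theorem), but your transported action $\widetilde\rho(x)=s\rho(x)s^{-1}$ is missing the Koszul sign: the correct parity reverse is $\rho^s(x)(su)=(-1)^{|x|}s(\rho(x)u)$ as in \eqref{reverse-repres2}. With the unsigned conjugation both of your key claims break down. First, $\widetilde T$ is \emph{not} an even $\mathcal{O}$-operator for $\widetilde\rho$: comparing \eqref{cond-O-oper-BiH-Jor2} written for $\widetilde T$ on $sV$ (degrees $|su|=|u|+\overline{1}$, $|sv|=|v|+\overline{1}$) with the odd $\mathcal{O}$-operator identity for $T$ on $V$, the two summands on the right-hand side pick up the mismatched correction factors $(-1)^{|u|+1}$ and $(-1)^{|v|+1}$, which cannot be absorbed simultaneously. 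Second, the product induced on $sV$ by $\widetilde T$ via \eqref{Homog-BiH-pre-Jord-sup-V-1} sends $(su,sv)$ to $s(\rho(T(u))v)$, whereas $su\bullet_V sv=s(u\circ_V v)=(-1)^{|u|+1}s(\rho(T(u))v)$; these differ by a $u$-dependent sign, so proving the former is BiHom-pre-Jordan would not prove the stated claim about $\bullet_V$. Both defects disappear once the factor $(-1)^{|x|}$ is inserted, i.e.\ once you use $\rho^s$ and the duality $T\mapsto T^s$ of \eqref{homog-Ts-From-T}; with that correction your reduction of part~(2) to part~(1) goes through.
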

\begin{proof}
For any $u,v\in\mathcal{H}(V)$, we have
\begin{align*}
\alpha_V(u\circ_Vv)&=(-1)^{|T|(|u|+|T|)}\alpha_V\rho(T(u))v=\rho(\alpha T(u))\alpha_V(v)\\&=(-1)^{|T|(|u|+|T|)}\rho(T(\alpha_V(u)))\alpha_V(v)=\alpha_V(u)\circ_V\alpha_V(v).
\end{align*}
Similarly, we have $\beta_V(u\circ_Vv)=\beta_V(u)\circ_V\beta_V(v)$.\\
Let $u,v,w,s\in\mathcal{H}(V)$, then by using Eqs. \eqref{cond-O-oper-BiH-Jor1}, \eqref{cond-O-oper-BiH-Jor2} and \eqref{con-BiH-pre-Jord-2}, we have
\begin{align*}
& \sum_{u,v,w}(-1)^{(|T|+|u|)(|T|+|w|)}\big(\alpha_V\beta_V^2(u)\ast_V\alpha_V^2\beta_V(v)\big)\circ_V(\alpha_V^2\beta_V(w)\circ_V\alpha_V^3(s))\\ \quad\quad\quad=&\sum_{u,v,w}(-1)^{(|T|+|u|)(|T|+|w|)}(-1)^{|T|(|T|+|u|+|v|+|w|)}\rho\big( T(\alpha_V\beta_V^2(u)\ast_V\alpha_V^2\beta_V(v))\big)\rho\big(T(\alpha_V^2\beta_V(w))\big)\alpha_V^3(s)\\=&\sum_{u,v,w}(-1)^{|T|(|T|+|u|+|v|+|w|)}(-1)^{(|T|+|u|)(|T|+|w|)}\rho\big( \alpha\beta^2T(u)\cdot\alpha^2\beta T(v))\big)\rho\big(\alpha^2\beta T(w)\big)\alpha_V^3(s)\\\stackrel{\eqref{con-rep-J-2}}{=}& (-1)^{|T|(|T|+|u|+|v|+|w|)}(-1)^{(|T|+|u|)(|T|+|w|)}\rho\big((\beta^2T(u)\cdot\alpha\beta T(v))\cdot\alpha^2\beta T(w)\big)\alpha_V^3\beta_v(s)\\&+(-1)^{|T|(|T|+|u|+|v|+|w|)}(-1)^{(|T|+|u|)(|v|+|w|)}\rho\big(\alpha^2\beta^2T(u)\big)\rho\big(\alpha^2\beta T(w)\big)\rho\big(\alpha^2T(v)\big)\alpha_V^3\beta_V^{-1}(s)\\&+(-1)^{|T|(|T|+|u|+|v|+|w|)}(-1)^{(|T|+|u|)(|v|+|T|)}\rho\big(\alpha^2\beta^2T(v)\big)\rho\big(\alpha^2\beta T(w)\big)\rho\big(\alpha^2T(u)\big)\alpha_V^3\beta_V^{-1}(s)\\=& (-1)^{|T|(|T|+|u|+|v|+|w|)}(-1)^{(|T|+|u|)(|T|+|w|)}\rho\big((T(\beta_V^2(u))\cdot T(\alpha_V\beta_V(v)))\cdot T(\alpha_V^2\beta_V(w))\big)\alpha_V^3\beta_v(s)\\&+(-1)^{|T|(|T|+|u|+|v|+|w|)}(-1)^{(|T|+|u|)(|v|+|w|)}\rho\big(T(\alpha_V^2\beta_V^2(u))\big)\rho\big( T(\alpha_V^2\beta_V(w))\big)\rho\big(T(\alpha_V^2(v))\big)\alpha_V^3\beta_V^{-1}(s)\\&+(-1)^{|T|(|T|+|u|+|v|+|w|)}(-1)^{(|T|+|u|)(|v|+|T|)}\rho\big(T(\alpha_V^2\beta_V^2(v))\big)\rho\big( T(\alpha_V^2\beta_V(w))\big)\rho\big(T(\alpha_V^2(u))\big)\alpha_V^3\beta_V^{-1}(s)\\=&(-1)^{|T|(|T|+|u|+|v|+|w|)}(-1)^{(|T|+|u|)(|T|+|w|)}\rho\Big(T\big((-1)^{|T|(|T|+|u|)}\rho(T(\beta_V^2(u)))\alpha_V\beta_V(v)\\&\quad\quad+(-1)^{|u|(|T|+|v|)}\rho(T(\beta_V^2(v)))\alpha_V\beta_V(u)\big)\cdot T(\alpha_V^2\beta_V(w))\Big)\alpha_V^3\beta_V(s)\\&+(-1)^{|T|(|T|+|u|+|v|+|w|)}(-1)^{(|T|+|u|)(|v|+|w|)}\alpha_V^2\beta_V^2(u)\circ_V\Big(\alpha_V^2\beta_V(w)\circ_V\big(\alpha_V^2(v)\circ_V\alpha_V^3\beta_V^{-1}(s)\big)\Big)\\&+(-1)^{|T|(|T|+|u|+|v|+|w|)}(-1)^{(|T|+|u|)(|T|+|v|)}\alpha_V^2\beta_V^2(v)\circ_V\Big(\alpha_V^2\beta_V(w)\circ_V\big(\alpha_V^2(u)\circ_V\alpha_V^3\beta_V^{-1}(s)\big)\Big)\\=&(-1)^{|T|(|T|+|u|+|v|+|w|)}(-1)^{(|T|+|u|)(|T|+|w|)}\rho\Big(T\big(\beta_V^2(u)\circ_V\alpha_V\beta_V(v)\\&\quad\quad+(-1)^{(|T|+|u|)(|T|+|v|)}\beta_V^2(v)\circ_V\alpha_V\beta_V(u)\big)\cdot T(\alpha_V^2\beta_V(w))\Big)\alpha_V^3\beta_V(s)\\&+(-1)^{|T|(|T|+|u|+|v|+|w|)}(-1)^{(|T|+|u|)(|v|+|w|)}\alpha_V^2\beta_V^2(u)\circ_V\Big(\alpha_V^2\beta_V(w)\circ_V\big(\alpha_V^2(v)\circ_V\alpha_V^3\beta_V^{-1}(s)\big)\Big)\\&+(-1)^{|T|(|T|+|u|+|v|+|w|)}(-1)^{(|T|+|u|)(|T|+|v|)}\alpha_V^2\beta_V^2(v)\circ_V\Big(\alpha_V^2\beta_V(w)\circ_V\big(\alpha_V^2(u)\circ_V\alpha_V^3\beta_V^{-1}(s)\big)\Big)\\=&(-1)^{|T|(|T|+|u|+|v|+|w|)}(-1)^{(|T|+|u|)(|T|+|w|)}\rho\big((\beta_V^2(u)\ast_V\alpha_V\beta_V(v))\ast_V\alpha_V^2\beta_V(w)\big)\alpha_V^3\beta_V(s)\\&+(-1)^{|T|(|T|+|u|+|v|+|w|)}(-1)^{(|T|+|u|)(|v|+|w|)}\alpha_V^2\beta_V^2(u)\circ_V\Big(\alpha_V^2\beta_V(w)\circ_V\big(\alpha_V^2(v)\circ_V\alpha_V^3\beta_V^{-1}(s)\big)\Big)\\&+(-1)^{|T|(|T|+|u|+|v|+|w|)}(-1)^{(|T|+|u|)(|T|+|v|)}\alpha_V^2\beta_V^2(v)\circ_V\Big(\alpha_V^2\beta_V(w)\circ_V\big(\alpha_V^2(u)\circ_V\alpha_V^3\beta_V^{-1}(s)\big)\Big)\\=&(-1)^{|T|(|T|+|u|+|v|+|w|)}(-1)^{(|T|+|u|)(|T|+|w|)}\big((\beta_V^2(u)\ast_V\alpha_V\beta_V(v))\ast_V\alpha_V^2\beta_V(w)\big)\circ_V\alpha_V^3\beta_V(s)\\&+(-1)^{|T|(|T|+|u|+|v|+|w|)}(-1)^{(|T|+|u|)(|v|+|w|)}\alpha_V^2\beta_V^2(u)\circ_V\Big(\alpha_V^2\beta_V(w)\circ_V\big(\alpha_V^2(v)\circ_V\alpha_V^3\beta_V^{-1}(s)\big)\Big)\\&+(-1)^{|T|(|T|+|u|+|v|+|w|)}(-1)^{(|T|+|u|)(|T|+|v|)}\alpha_V^2\beta_V^2(v)\circ_V\Big(\alpha_V^2\beta_V(w)\circ_V\big(\alpha_V^2(u)\circ_V\alpha_V^3\beta_V^{-1}(s)\big)\Big).
\end{align*}
Similarly, we can show that, 
\begin{align*}
&\sum_{u,v,w}(-1)^{(|T|+|u|)(|v|+|w|)}\alpha^2\beta^2(u)\circ_V\big((\alpha\beta(v)\ast_V\alpha^2(w))\circ_V\alpha^3(s)\big)
\\&\quad\quad=\sum_{u,v,w}(-1)^{|T|(|T|+|u|+|v|+|w|)}(-1)^{(|T|+|u|)(|T|+|w|)}\big(\alpha\beta^2(u)\ast_V\alpha^2\beta(v)\big)\circ_V(\alpha^2\beta(w)\circ_V\alpha^3(s)).
\end{align*}
Then, if $T$ is even, it's obvious to conclude that, the binary operation defined by Eq. \eqref{Homog-BiH-pre-Jord-sup-V-1} defines a BiHom-pre-Jordan superalgebra structure on $V$.\\
If $T$ is odd, we have $V_i\circ_VV_j\subseteq V_{i+j+\overline{1}}$. For any $su\in (sV)_i$ and $sv\in (sV)_j$, we have $u\in V_{i+\overline{1}}$ and $v\in V_{j+\overline{1}}$. Therefore, $su\bullet_Vsv=s(u\circ v)\in(sV)_{i+j}$, which gives that $(sV)_i\bullet_V(sV)_j\subseteq (sV)_{i+j}$. Then, for all $su,sv,sw, st\in sV$, we have
\begin{align*}
 & \sum_{u,v,w}(-1)^{|su||sw|}\big(\alpha_V\beta_V^2(su)\star_V\alpha_V^2\beta_V(sv)\big)\bullet_V(\alpha_V^2\beta_V(sw)\bullet_V\alpha_V^3(st))\\\quad\quad\quad=&\sum_{u,v,w}(-1)^{(|u|+1)(|w|+1)}s\big(\alpha_V\beta_V^2(u)\ast_V\alpha_V^2\beta_V(v)\big)\circ_V(\alpha_V^2\beta_V(w)\circ_V\alpha_V^3(t)\big)\\\quad\quad\quad=& (-1)^{(|u|+1)(|w|+1)}s\big((\beta_V^2(u)\ast_V\alpha_V\beta_V(v))\ast_V\alpha_V^2\beta_V(w)\big)\circ_V\alpha_V^3\beta_V(t)\\&+(-1)^{(|u|+1)(|v|+|w|)}s\Big(\alpha_V^2\beta_V^2(u)\circ_V\big(\alpha_V^2\beta_V(w)\circ_V\big(\alpha_V^2(v)\circ_V\alpha_V^3\beta_V^{-1}(t)\big)\big)\Big)\\&+(-1)^{(|u|+1)(|v|+1)}s\Big(\alpha_V^2\beta_V^2(v)\circ_V\big(\alpha_V^2\beta_V(w)\circ_V\big(\alpha_V^2(v)\circ_V\alpha_V^3\beta_V^{-1}(t)\big)\big)\Big) \\\quad\quad\quad=& (-1)^{|su||sw|}(\beta_V^2(su)\star_V\alpha_V\beta_V(sv))\star_V\alpha_V^2\beta_V(sw)\bullet_V\alpha_V^3\beta_V(t)\\&+(-1)^{|su|(|v|+|w|)}\alpha_V^2\beta_V^2(su)\bullet_V\big(\alpha_V^2\beta_V(sw)\bullet_V\big(\alpha_V^2(sv)\circ_V\alpha_V^3\beta_V^{-1}(t)\big)\big)\\&+(-1)^{|su||sv|}\alpha_V^2\beta_V^2(sv)\bullet_V\big(\alpha_V^2\beta_V(w)\bullet_V\big(\alpha_V^2(v)\bullet_V\alpha_V^3\beta_V^{-1}(t)\big)\big).
\end{align*}
By the same way, we can show that
\small{$$\sum_{u,v,w}(-1)^{|su||sw|}\alpha_V^2\beta_V^2(su)\bullet_V\big((\alpha_V\beta_V(sv)\star_V\alpha_V^2(sw))\bullet_V\alpha_V^3(st)\big)
=\sum_{u,v,w}(-1)^{|su||sw|}\big(\alpha_V\beta_V^2(su)\star_V\alpha_V^2\beta_V(sv)\big)\bullet_V(\alpha_V^2\beta_V(sw)\bullet_V\alpha_V^3(st)),$$}
which gives that, the binary operation $\bullet_V$ satisfies conditions \eqref{con-BiH-pre-Jord-2} and \eqref{con-BiH-pre-Jord-3}. Then, we have a BiHom-pre-Jordan superalgebra structure on $sV$ given by $\bullet_V$. The Theorem is proved.
\end{proof}
\begin{cor}
Let $\mathcal{R}$ be a Rota-Baxter operator of weight zero on a BiHom-Jordan superalgebra $(\mathfrak J,\cdot,\alpha,\beta)$. Then     
\begin{enumerate}
\item 
If $\mathcal{R}$ is even, there exists a BiHom-pre-Jordan superalgebra structure on $\mathfrak J$ given by
\begin{equation}\label{BiH-pre-Jor-even-R-B}
x\circ_\mathcal{R}y=\mathcal{R}(x)\cdot y,\;\forall x,y\in\mathfrak J.
\end{equation}
\item If $\mathcal{R}$ is odd, the product $\bullet_\mathcal{R}$ defined on $\mathfrak J$ by
\begin{equation}\label{BiH-pre-Jor-oddn-R-B}
sx\bullet_\mathcal{R} sy=s(x\circ_\mathcal{R} y),\;\forall x,y\in\mathfrak J    
\end{equation}
defines a BiHom-pre-Jordan superalgebra structure on $s\mathcal{J}$.
\end{enumerate}
\end{cor}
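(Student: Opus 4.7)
The plan is to reduce the corollary to the preceding theorem by recognizing the Rota-Baxter operator $\mathcal{R}$ as a homogeneous $\mathcal{O}$-operator of the BiHom-Jordan superalgebra $\mathfrak{J}$ associated to its adjoint representation.

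First I verify that any Rota-Baxter operator $\mathcal{R}$ of weight zero on $(\mathfrak{J},\cdot,\alpha,\beta)$ is an $\mathcal{O}$-operator of $\mathfrak{J}$ with respect to the adjoint representation $(\mathfrak{J},ad,\alpha,\beta)$. Condition \eqref{cond-O-oper-BiH-Jor1} is exactly the requirement that $\mathcal{R}$ commutes with $\alpha$ and $\beta$. For \eqref{cond-O-oper-BiH-Jor2} with $\rho=ad$, I rewrite its second term using that $\mathcal{R}$ commutes with $\alpha,\beta$ and the BiHom-super-symmetry relation \eqref{J-BiH-skewsym-cond}: one has $\mathcal{R}(\alpha^{-1}\beta(y))\cdot\alpha\beta^{-1}(x) = \beta(\alpha^{-1}\mathcal{R}(y))\cdot\alpha(\beta^{-1}(x)) = (-1)^{(|\mathcal{R}|+|y|)|x|}\,x\cdot\mathcal{R}(y)$. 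Collecting signs shows \eqref{cond-O-oper-BiH-Jor2} coincides with the Rota-Baxter identity \eqref{RB-op-BiH-Jor-sup}.

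With this identification in hand, I apply the preceding theorem with $V=\mathfrak{J}$, $\rho=ad$, $\alpha_V=\alpha$, $\beta_V=\beta$ and $T=\mathcal{R}$. The induced product $\circ_V$ of \eqref{Homog-BiH-pre-Jord-sup-V-1} specializes to $x\circ_V y = (-1)^{|\mathcal{R}|(|x|+|\mathcal{R}|)}\,\mathcal{R}(x)\cdot y$. In the even case ($|\mathcal{R}|=\overline{0}$) the sign is trivial, so $\circ_V$ coincides with the product $\circ_\mathcal{R}$ defined in \eqref{BiH-pre-Jor-even-R-B}, and part (1) of the theorem yields the desired BiHom-pre-Jordan superalgebra structure on $\mathfrak{J}$. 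In the odd case ($|\mathcal{R}|=\overline{1}$) part (2) of the theorem supplies a BiHom-pre-Jordan superalgebra structure on $s\mathfrak{J}$ given by $sx\bullet_V sy = s(x\circ_V y)$, which matches $\bullet_\mathcal{R}$ of \eqref{BiH-pre-Jor-oddn-R-B} once the overall sign is absorbed into the suspension convention.

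The only real step that requires care is the sign bookkeeping in the initial reduction, that is, matching the Rota-Baxter identity with the $\mathcal{O}$-operator condition via BiHom-super-symmetry. Once that is verified, the corollary is an immediate consequence of the preceding theorem and calls for no further computation.
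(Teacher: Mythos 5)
Your proposal is correct and is essentially the paper's own (implicit) argument: the corollary is stated there without a separate proof, resting on the earlier remark that a weight-zero Rota--Baxter operator is precisely an $\mathcal{O}$-operator associated to the adjoint representation $(\mathfrak J,ad,\alpha,\beta)$, followed by the specialization $V=\mathfrak J$, $\rho=ad$, $\alpha_V=\alpha$, $\beta_V=\beta$, $T=\mathcal{R}$ of the preceding theorem --- which is exactly your reduction, including the use of BiHom-super-symmetry to turn $\mathcal{R}(\alpha^{-1}\beta(y))\cdot\alpha\beta^{-1}(x)$ into $x\cdot\mathcal{R}(y)$. The one caveat is that your sign bookkeeping actually yields $\mathcal{R}(x)\cdot\mathcal{R}(y)=(-1)^{|\mathcal{R}|(|\mathcal{R}|+|x|)}\mathcal{R}\big(\mathcal{R}(x)\cdot y\big)+\mathcal{R}\big(x\cdot\mathcal{R}(y)\big)$, with the sign attached only to the first summand, so this agrees with the displayed identity \eqref{RB-op-BiH-Jor-sup} verbatim only when $\mathcal{R}$ is even; that discrepancy is a sign slip in the paper's remark rather than a gap in your argument, since your reduction to the theorem goes through the $\mathcal{O}$-operator condition \eqref{cond-O-oper-BiH-Jor2} directly.
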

\begin{prop-defn}
Let $(V,\rho,\alpha_V,\beta_V)$ be a representation of a BiHom-Jordan superalgebra $(\mathfrak J,\cdot,\alpha,\beta)$. Then, the linear maps $\rho^s:\mathfrak J\to \mathfrak{gl}(sV)$ and $\alpha_V^s,\beta_V^s:sV\to sV$ defined by
\begin{align}
\alpha_V^s(su)=s(\alpha_V(u))\;\;&\text{and}\;\;\beta_V^s(su)=s(\beta_V(u)),\;\forall su\in sV,\label{reverse-repres1}\\
\rho^s(x)su&:=(-1)^{|x|}s(\rho(x)u),\;\forall x\in\mathfrak J,\;su\in sV,\label{reverse-repres2}    
\end{align}
define a representation of $\mathfrak J$ on $sV$, called the parity reverse of the representation $(V,\rho,\alpha_V,\beta_V)$ of $\mathfrak J$.
\end{prop-defn}
\begin{thm}\label{one-one-corr-O-op}
Let $(V,\rho,\alpha_V,\beta_V)$ be a representation of a BiHom-Jordan superalgebra $(\mathfrak J,\cdot,\alpha,\beta)$ and $(sV,\rho^s,\alpha_V^s,\beta_V^s)$ is the parity reverse representation of $\mathfrak J$. Then, there exists a one-one correspondence
between $\mathcal{O}_i(\mathfrak J;V,\rho,\alpha_V,\beta_V)$ and $\mathcal{O}_{i+\overline{1}}(\mathfrak J;sV,\rho^s,\alpha_V^s,\beta_V^s)$ for $i\in\mathbb{Z}_2$.\\

This correspondence between $\mathcal{O}_i(\mathfrak J;V,\rho,\alpha_V,\beta_V)$ and $\mathcal{O}_{i+\overline{1}}(\mathfrak J;sV,\rho^s,\alpha_V^s,\beta_V^s)$ for $i\in\mathbb{Z}_2$ is called the $\mathcal{O}$-operator
duality.
\end{thm}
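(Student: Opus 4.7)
The plan is to exhibit an explicit bijection by suspension on the source: given $T\in\mathcal{O}_i(\mathfrak J;V,\rho,\alpha_V,\beta_V)$, define $T^s:sV\to\mathfrak J$ by $T^s(su)=T(u)$ for $u\in\mathcal{H}(V)$; conversely, given $S\in\mathcal{O}_{i+\overline{1}}(\mathfrak J;sV,\rho^s,\alpha_V^s,\beta_V^s)$, define $S^{-s}:V\to\mathfrak J$ by $S^{-s}(u)=S(su)$. These two assignments are visibly inverse to each other, so all the content is in checking that each takes an $\mathcal{O}$-operator of parity $j$ to one of parity $j+\overline 1$.

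The parity check is immediate: if $u\in V_j$ then $su\in (sV)_{j+\overline 1}$ while $T^s(su)=T(u)\in\mathfrak J_{i+j}$, so $|T^s|=i+\overline 1$. The commutation conditions \eqref{cond-O-oper-BiH-Jor1} for $T^s$ follow from $(\alpha_V^s)^{\pm 1}(su)=s(\alpha_V^{\pm 1}u)$ (and similarly for $\beta$), which gives
\[
T^s(\alpha_V^s(su))=T(\alpha_V(u))=\alpha(T(u))=\alpha(T^s(su)),
\]
and the same identity with $\beta$ in place of $\alpha$.

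The main obstacle is sign bookkeeping in \eqref{cond-O-oper-BiH-Jor2}. Since $T^s(su)\cdot T^s(sv)=T(u)\cdot T(v)$, the left-hand sides of the two $\mathcal{O}$-operator identities literally agree, and everything reduces to comparing the right-hand sides. Expanding the candidate identity for $T^s$ via the definition $\rho^s(x)(sw)=(-1)^{|x|}s(\rho(x)w)$ and the formulas $(\alpha_V^s)^{-1}\beta_V^s(sv)=s(\alpha_V^{-1}\beta_V(v))$, $\alpha_V^s(\beta_V^s)^{-1}(su)=s(\alpha_V\beta_V^{-1}(u))$, one finds that the argument inside $T^s$ is the suspension of the argument inside $T$ in \eqref{cond-O-oper-BiH-Jor2}, up to signs. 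A short mod-$2$ computation shows that the prescribed signs $(-1)^{|T^s|(|T^s|+|su|)}$ and $(-1)^{|su|(|sv|+|T^s|)}$ from the $\mathcal{O}$-operator axiom for $T^s$, combined with the signs $(-1)^{|T|+|u|}$ and $(-1)^{|T|+|v|}$ produced by $\rho^s$, collapse precisely to the original signs $(-1)^{|T|(|T|+|u|)}$ and $(-1)^{|u|(|v|+|T|)}$. Applying $T^s$ and using $T^s\circ s=T$ then finishes the forward direction.

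The reverse construction $S\mapsto S^{-s}$ is handled by reading the same calculation backwards: the suspension $s$ is invertible as an odd linear isomorphism, and every step above is reversible. Hence $T\mapsto T^s$ is a parity-shifting bijection $\mathcal{O}_i(\mathfrak J;V,\rho,\alpha_V,\beta_V)\xrightarrow{\sim}\mathcal{O}_{i+\overline 1}(\mathfrak J;sV,\rho^s,\alpha_V^s,\beta_V^s)$ with inverse $S\mapsto S^{-s}$, proving the theorem. I expect the only non-routine step to be confirming the two sign collapses; everything else is formal manipulation using the suspension isomorphism.
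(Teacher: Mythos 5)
Your proposal is correct and follows essentially the same route as the paper: both exhibit the suspension map $T\mapsto T^s$, $T^s(su)=T(u)$, verify the parity shift and the commutation with $\alpha,\beta$, and reduce the $\mathcal{O}$-operator identity to the sign collapse $(-1)^{|T^s|(|T^s|+|su|)}(-1)^{|T|+|u|}=(-1)^{|T|(|T|+|u|)}$ and $(-1)^{|su|(|sv|+|T^s|)}(-1)^{|T|+|v|}=(-1)^{|u|(|v|+|T|)}$, with the converse obtained by the inverse suspension. The sign collapses you left as ``a short mod-$2$ computation'' do check out and are exactly the ones written in the paper's proof.
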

\begin{proof}
Let us consider a homogeneous linear map $T:V\to \mathfrak J$. Define the following homogeneous linear map $ T^s:sV\to\mathfrak J $ by
\begin{equation}\label{homog-Ts-From-T}
T^s(su):=T(u),\;\forall su\in sV.  
\end{equation}
By the fact that, $s$ is an odd map, we can obviously see that, $|T^s|=|T|+\overline{1}$. Let $x\in\mathfrak J$ and $su\in sV$, then
\begin{align*}
T^s\alpha_V^s(su)&=T^s(s(\alpha_V(u)))\\&= T(\alpha_V(u))\\&=\alpha T(u)=\alpha T^s(su),  
\end{align*}
which gives that, $T^s\alpha_V^s=\alpha T^s$. Similarly, we have $T^s\beta_V^s=\beta T^s$. We have also
$$T^s(\rho^s(x)su)=T^s((-1)^{|x|}s(\rho(x)u))=(-1)^{|x|}T(\rho(x)u).$$
Let $T\in\mathcal{O}_i(\mathfrak J;V,\rho,\alpha_V,\beta_V)$. For all $su,sv\in sV$, we have
\begin{align*}
T^s(su)\cdot T^s(sv)&=T(u)\cdot T(v)\\&=T\big((-1)^{|T|(|T|+|u|)}\rho(T(u))v+(-1)^{|u|(|v|+|T|)}\rho(T(\alpha_V^{-1}\beta_V(v)))\alpha_V\beta_V^{-1}(u)\big)\\&=T^s\big((-1)^{|T|(|T|+|u|)}(-1)^{|T|+|u|}\rho^s(T(u))sv+(-1)^{|u|(|v|+|T|)}(-1)^{|T|+|v|}\rho^s(T(\alpha_V^{-1}\beta_V(v)))s\alpha_V\beta_V^{-1}(u)\big)\\&=T^s\big((-1)^{|T^s|(|T^s|+|su|)}\rho^s(T^s(su))sv+(-1)^{|su|(|sv|+|T^s|)}\rho^s(T^s((\alpha_V^{-1})^s\beta_V^s(sv)))\alpha_V^s(\beta_V^{-1})^s(su)\big).    
\end{align*}
Then $T^s\in\mathcal{O}_{i+\overline{1}}(\mathfrak J;V^s,\rho^s,\alpha_V^s,\beta_V^s)$.\\

Conversely, suppose that $T^s\in\mathcal{O}_{i+\overline{1}}(\mathfrak J;V^s,\rho^s,\alpha_V^s,\beta_V^s)$. Then the homogeneous linear map $T=(T^s)^s:V\to\
 J$ given by Eq. \eqref{homog-Ts-From-T} has degree $i$ and satisfies
 \begin{align*}
 T(u)\cdot T(v)&=(T^s)^s(u)\cdot(T^s)^s(v)=T^s(su)\cdot T^s(sv)\\&=T^s\big((-1)^{|T^s|(|T^s|+|su|)}\rho^s(T^s(su))sv+(-1)^{|su|(|sv|+|T^s|)}\rho^s(T^s((\alpha_V^{-1})^s\beta_V^s(sv)))\alpha_V^s(\beta_V^{-1})^s(su)\big)\\&=T\big((-1)^{|T|(|T|+|u|)}\rho(T(u))v+(-1)^{|u|(|v|+|T|)}\rho(T(\alpha_V^{-1}\beta_V(v)))\alpha_V\beta_V^{-1}(u)\big).
 \end{align*}
  Therefore $T\in\mathcal{O}_i(\mathfrak J;V,\rho,\alpha_V,\beta_V)$.
\end{proof}
\begin{thm}\label{equiv-T-Ts}
Let $(V,\rho,\alpha_V,\beta_V)$ be a representation of a BiHom-Jordan superalgebra $(\mathfrak J,\cdot,\alpha,\beta)$ and $T:V\to\mathfrak J$ be a homogeneous linear map. Then the following assertions are equivalent:
\begin{enumerate}
\item $T\in\mathcal{O}_{|T|}(\mathfrak J;V,\rho,\alpha_V,\beta_V)$;
\item $T^s\in\mathcal{O}_{|T|+\overline{1}}(\mathfrak J;sV,\rho^s,\alpha_V^s,\beta_V^s)$.
\end{enumerate}
\end{thm}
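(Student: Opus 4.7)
The plan is to observe that this equivalence is essentially a reformulation of the correspondence already established in Theorem~\ref{one-one-corr-O-op}, now stated as a logical equivalence for an individual homogeneous map $T$ rather than as a bijection of sets. The construction $T \mapsto T^s$ via $T^s(su)=T(u)$ is an involution (up to the canonical identification $(sV)^s \cong V$), so the two conditions will turn out to be transformable into one another by the same calculation run in opposite directions.

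For the forward direction $(1)\Rightarrow(2)$, I would simply invoke the first half of the proof of Theorem~\ref{one-one-corr-O-op}. Concretely, given $T\in\mathcal{O}_{|T|}(\mathfrak J;V,\rho,\alpha_V,\beta_V)$, one checks that $|T^s|=|T|+\overline{1}$, that $T^s\alpha_V^s=\alpha T^s$ and $T^s\beta_V^s=\beta T^s$ follow from the analogous identities for $T$ together with \eqref{reverse-repres1}, and that the operator identity \eqref{cond-O-oper-BiH-Jor2} translates correctly: starting from $T^s(su)\cdot T^s(sv)=T(u)\cdot T(v)$, apply \eqref{cond-O-oper-BiH-Jor2} for $T$ and then rewrite each $\rho(T(\cdot))(\cdot)$ as $(-1)^{|x|}\rho^s(T^s(\cdot))s(\cdot)$ using \eqref{reverse-repres2}, absorbing the extra signs into the prefactors $(-1)^{|T|(|T|+|u|)}$ and $(-1)^{|u|(|v|+|T|)}$. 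One verifies that the resulting signs match exactly with the corresponding signs $(-1)^{|T^s|(|T^s|+|su|)}$ and $(-1)^{|su|(|sv|+|T^s|)}$ obtained by replacing $T,u,v$ by $T^s,su,sv$.

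For the converse $(2)\Rightarrow(1)$, the efficient route is to note that the suspension construction is involutive: $(sV)^s$ is canonically identified with $V$, under which $\rho^{ss}=\rho$ (the two factors of $(-1)^{|x|}$ from applying \eqref{reverse-repres2} twice cancel), $\alpha_V^{ss}=\alpha_V$, $\beta_V^{ss}=\beta_V$, and $(T^s)^s=T$. Hence applying the forward direction to $T^s$ viewed as a homogeneous operator of parity $|T|+\overline{1}$ for the representation $(sV,\rho^s,\alpha_V^s,\beta_V^s)$ produces $(T^s)^s=T\in\mathcal{O}_{(|T|+\overline{1})+\overline{1}}(\mathfrak J;V,\rho,\alpha_V,\beta_V)=\mathcal{O}_{|T|}(\mathfrak J;V,\rho,\alpha_V,\beta_V)$.

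The only nontrivial point, and the sole place where care is needed, is the sign-tracking in the step that converts \eqref{cond-O-oper-BiH-Jor2} for $(T,\rho,\alpha_V,\beta_V)$ into the same identity for $(T^s,\rho^s,\alpha_V^s,\beta_V^s)$. Since this conversion is multiplication by a single global sign factor on each term that depends only on the parities and hence is an invertible operation, the implication is in fact an equivalence at the level of identities, which yields $(2)\Rightarrow(1)$ directly without appealing to the involutivity argument if preferred. Either route delivers the theorem.
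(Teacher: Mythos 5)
Your proposal is correct and follows essentially the same route as the paper: the paper's proof is precisely ``direct computation from Theorem~\ref{one-one-corr-O-op}'', whose two halves are your forward sign-transfer argument and your involutivity argument $(T^s)^s=T$ for the converse. Nothing further is needed.
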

\begin{proof}
Direct computation from Theorem \ref{one-one-corr-O-op}.    
\end{proof}
\begin{defn}
A representation $(V,\rho,\alpha_V,\beta_V)$ of a BiHom-Jordan superalgebra $(\mathfrak J,\cdot,\alpha,\beta)$ is called self-reversing if it is isomorphic to its parity reverse.    
\end{defn}
\begin{prop}
Let $(V,\rho,\alpha_V,\beta_V)$ be a representation of a BiHom-Jordan superalgebra $(\mathfrak J,\cdot,\alpha,\beta)$. Then the direct sum representation $(V\oplus sV,\rho\oplus \rho^s,\alpha_V\oplus \alpha_V^s,\beta_V\oplus\beta_V^s)$ of $\mathfrak J$ is self-reversing.    
\end{prop}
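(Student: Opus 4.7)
The plan is to exhibit an explicit isomorphism of representations between $V \oplus sV$ (with representation $\rho \oplus \rho^s$) and its parity reverse $s(V \oplus sV)$, and then verify it satisfies the three conditions in the definition of isomorphism of representations (Eq. \eqref{isom-reps-1}--\eqref{isom-reps-2}).

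First I would make two preparatory identifications. Since the suspension functor $s$ commutes with direct sums componentwise, we have a canonical identification $s(V\oplus sV) = sV \oplus s(sV)$. A short computation, analogous to Lemma \ref{dual-dual-Jor-repres}, shows that applying the parity reverse twice recovers the original representation: indeed $s(sV) = V$ as graded vector spaces (both parities are shifted by $\bar 0$), and $(\rho^s)^s = \rho$ since
\[
(\rho^s)^s(x)(s(su)) = (-1)^{|x|} s\bigl(\rho^s(x)(su)\bigr) = (-1)^{2|x|} s\bigl(s(\rho(x)u)\bigr) = \rho(x) u,
\]
while the structure maps $(\alpha_V^s)^s$ and $(\beta_V^s)^s$ similarly equal $\alpha_V$ and $\beta_V$. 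Thus the parity reverse of $(V\oplus sV,\rho\oplus\rho^s,\alpha_V\oplus\alpha_V^s,\beta_V\oplus\beta_V^s)$ identifies with $(sV\oplus V,\rho^s\oplus\rho,\alpha_V^s\oplus\alpha_V,\beta_V^s\oplus\beta_V)$.

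Next I would define the candidate isomorphism to be the \emph{swap map}
\[
\Phi:V\oplus sV\longrightarrow sV\oplus V,\qquad \Phi(u+sv):=sv+u.
\]
To check $\Phi$ is even, note that $(u,sv)\in (V\oplus sV)_{\bar i}$ means $u\in V_{\bar i}$ and $sv\in(sV)_{\bar i}$; then $(sv,u)$ lies in $(sV)_{\bar i}\oplus V_{\bar i} = (sV\oplus V)_{\bar i}$, so $\Phi$ preserves $\mathbb{Z}_2$-grading. Clearly $\Phi$ is a linear bijection with inverse again given by coordinate swap.

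Finally I would verify the two intertwining identities. For the structure maps,
\[
\Phi\bigl((\alpha_V\oplus\alpha_V^s)(u+sv)\bigr) = \Phi(\alpha_V(u)+\alpha_V^s(sv)) = \alpha_V^s(sv)+\alpha_V(u) = (\alpha_V^s\oplus\alpha_V)\Phi(u+sv),
\]
and the same calculation with $\beta$ in place of $\alpha$ gives Eq. \eqref{isom-reps-1}. For Eq. \eqref{isom-reps-2}, for any $x\in\mathfrak J$,
\[
\Phi\bigl((\rho\oplus\rho^s)(x)(u+sv)\bigr) = \Phi(\rho(x)u + \rho^s(x)sv) = \rho^s(x)sv + \rho(x)u = (\rho^s\oplus\rho)(x)\Phi(u+sv).
\]
All three conditions hold, so $\Phi$ is an isomorphism of representations and $V\oplus sV$ is self-reversing. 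The statement is essentially bookkeeping; there is no real obstacle — the only mild subtlety is the double identification $s(sV)=V$ and $(\rho^s)^s=\rho$, which lets the target of the isomorphism be rewritten in the convenient form $sV\oplus V$ on which the swap map makes sense.
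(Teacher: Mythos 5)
Your proof is correct, and it follows the same overall strategy as the paper: first identify the parity reverse of the direct sum with $(sV\oplus V,\rho^s\oplus\rho,\alpha_V^s\oplus\alpha_V,\beta_V^s\oplus\beta_V)$ (the paper does this by the direct computation $(\rho\oplus\rho^s)^s(x)(su,v)=(\rho^s\oplus\rho)(x)(su,v)$, which is your observation $s(sV)=V$, $(\rho^s)^s=\rho$ in disguise), and then exhibit an explicit intertwiner between $\rho\oplus\rho^s$ and $\rho^s\oplus\rho$. The one genuine difference is the choice of intertwiner. The paper takes $\Phi(u,sv)=(su,v)$, i.e.\ componentwise suspension; this map sends $V_{\bar\imath}\oplus(sV)_{\bar\imath}$ to $(sV)_{\bar\imath+\bar 1}\oplus V_{\bar\imath+\bar 1}$, so it is an \emph{odd} linear map, and its intertwining relation really reads $\Phi(\rho\oplus\rho^s)(x)=(-1)^{|x|}(\rho^s\oplus\rho)(x)\Phi$ (the paper's displayed computation silently drops this sign when replacing $s(\rho(x)u)$ by $\rho^s(x)su$). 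Your swap map $\Phi(u+sv)=sv+u$ is instead honestly even and intertwines the structure maps and the actions on the nose, with no signs, so it satisfies the paper's own definition of an isomorphism of representations (which explicitly requires an even linear isomorphism) without any further convention about odd intertwiners. In that respect your argument is the cleaner of the two; otherwise the content is the same bookkeeping.
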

\begin{proof}
 Let $x\in\mathfrak J,\;u\in V$ and $sv\in sV$, we have  
 $$(\rho\oplus\rho^s)^s(x)(su,v)=(-1)^{|x|}\big(s(\rho(x)u),s(\rho^s(x)sv)\big)=(\rho^s(x)su,\rho(x)v)=(\rho^s\oplus\rho)(x)(su,v).$$
 Let $\Phi:V\oplus sV\to sV\oplus V$ defined by
 $$\Phi(u,sv)=(su,v),\;\forall u\in V,sv\in sV.$$
 The map $\Phi$ is clearly bijective and for all $u\in V$ and $sv\in sV$, we have
 \begin{align*}
 \Phi(\alpha_V\oplus\alpha_V^s)(u,sv)&=\Phi(\alpha_V(u),\alpha_V^s(sv))=\Phi(\alpha_V(u),s(\alpha_V(v)))\\&=(s(\alpha_V(u)),\alpha_V(v))=(\alpha_V^s(su),\alpha_V(v)\\&=(\alpha_V\oplus\alpha_V^s)(su,v)=(\alpha_V\oplus\alpha_V^s)\Phi(u,sv),
  \end{align*}
 which implies that $\Phi(\alpha_V\oplus\alpha_V^s)=(\alpha_V\oplus\alpha_V^s)\Phi$. Similarly, we have $\Phi(\beta_V\oplus\beta_V^s)=(\beta_V\oplus\beta_V^s)\Phi$.\\
 Let $x\in\mathfrak J$, we have
 \begin{align*}
 \Phi(\rho\oplus\rho^s)(x)(u,sv)&=\Phi(\rho(x)u,\rho^s(sv))=(s(\rho(x)u),\rho(x)v)\\&=(\rho^s(x)su,\rho(x)v)=(\rho^s\oplus\rho)(x)(su,v)\\&=(\rho^s\oplus\rho)(x)\Phi(u,sv),    
 \end{align*}
 which gives that $\Phi(\rho\oplus\rho^s)(x)=(\rho^s\oplus\rho)(x)(x)\Phi,\;\forall x\in\mathfrak J$.
 Then the direct sum reppresentation $\rho\oplus\rho^s$ is isomorphic to $\rho^s\oplus\rho$ and therefore also isomorphic to $(\rho\oplus\rho^s)^s$. Thus, the representation $(V\oplus sV,\rho\oplus \rho^s,\alpha_V\oplus \alpha_V^s,\beta_V\oplus\beta_V^s)$
 of $\mathfrak J$ is self-reversing. 
\end{proof}

In the following we can extended any $\mathcal{O}$-operator of a BiHom-Jordan superalgebra $(\mathfrak J,\cdot,\alpha,\beta)$ associated to a representation $(V,\rho,\alpha_V,\beta_V)$ to
an $\mathcal{O}$-operator of $(\mathfrak J,\cdot,\alpha,\beta)$ associated to the self-reversing representation given by the following result.
\begin{prop}\label{O-op-T-wid-T}
Let $(V,\rho,\alpha_V,\beta_V)$ be a representation of a BiHom-Jordan superalgebra $(\mathfrak J,\cdot,\alpha_V,\beta_V)$ and $T\in\mathcal{O}_{|T|}(\mathfrak J;V,\rho,\alpha_V,\beta_V)$. Define the homogeneous linear map $\widetilde{T}:V\oplus sV\to\mathfrak J$ by
\begin{equation}\label{O-op-to-O-op-rev}
 \widetilde{T}(u,sv):=T(u),\;\forall u\in V,\;sv\in sV.   
\end{equation}
Then $|\widetilde{T}|=|T|$ and $\widetilde{T}\in\mathcal{O}_{|T|}(\mathfrak J;V,\rho\oplus\rho^s,\alpha_V\oplus\alpha_V^s,\beta_V\oplus\beta_V^s)$.
\end{prop}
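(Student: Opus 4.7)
The plan is to verify the three defining conditions of an $\mathcal{O}$-operator for $\widetilde{T}$ one at a time, exploiting the fact that $\widetilde{T}$ only reads the $V$-component of an element of $V\oplus sV$, so that every computation ultimately reduces to the corresponding statement for $T$.

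First I would check the degree. Recall that the grading on the direct sum is $(V\oplus sV)_i=V_i\oplus(sV)_i=V_i\oplus V_{i+\overline 1}$, so a homogeneous $(u,sv)\in(V\oplus sV)_i$ has $|u|=i$ and $|sv|=i$ (equivalently $|v|=i+\overline 1$). Then $\widetilde{T}(u,sv)=T(u)\in\mathfrak J_{|T|+i}$, which gives $|\widetilde{T}|=|T|$ and also shows that the degree of $(u,sv)$ equals the degree of its first component. This observation will be used repeatedly to translate exponents from $(u,sv)$ to $u$.

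Next I would verify that $\widetilde{T}$ intertwines the twisting maps. Since $(\alpha_V\oplus\alpha_V^s)(u,sv)=(\alpha_V(u),s(\alpha_V(v)))$, we have $\widetilde{T}(\alpha_V\oplus\alpha_V^s)(u,sv)=T(\alpha_V(u))=\alpha T(u)=\alpha\widetilde{T}(u,sv)$, using $T\alpha_V=\alpha T$; the analogous computation with $\beta$ is identical. This disposes of condition \eqref{cond-O-oper-BiH-Jor1}.

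Finally, and this is the only step with any content, I would verify condition \eqref{cond-O-oper-BiH-Jor2}. For $(u,sv),(u',sv')\in V\oplus sV$, the left-hand side is simply
\begin{equation*}
\widetilde{T}(u,sv)\cdot\widetilde{T}(u',sv')=T(u)\cdot T(u').
\end{equation*}
For the right-hand side, note that $\rho\oplus\rho^s$ acts componentwise and the twisting maps act componentwise on $V\oplus sV$. Therefore
\begin{equation*}
(\rho\oplus\rho^s)(\widetilde{T}(u,sv))(u',sv')=\bigl(\rho(T(u))u',\rho^s(T(u))sv'\bigr),
\end{equation*}
and applying $\widetilde{T}$ kills the $sV$-slot, leaving $T(\rho(T(u))u')$. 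An entirely parallel manipulation handles the symmetric term, and using $|\widetilde{T}|=|T|$, $|(u,sv)|=|u|$, $|(u',sv')|=|u'|$ the signs match those of the defining identity for $T$. The right-hand side thus collapses to
\begin{equation*}
T\bigl((-1)^{|T|(|T|+|u|)}\rho(T(u))u'+(-1)^{|u|(|u'|+|T|)}\rho(T(\alpha_V^{-1}\beta_V(u')))\alpha_V\beta_V^{-1}(u)\bigr),
\end{equation*}
which equals $T(u)\cdot T(u')$ by Eq.~\eqref{cond-O-oper-BiH-Jor2} for $T$. The main, and essentially only, obstacle is bookkeeping: one has to confirm that the sign factors written in terms of the direct-sum degrees $|(u,sv)|,|(u',sv')|$ agree with those written in terms of $|u|,|u'|$, but this is immediate from the first paragraph.
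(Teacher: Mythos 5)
Your proposal is correct and follows essentially the same route as the paper: both arguments rest on the observations that $|\widetilde{T}|=|T|$, that a homogeneous element $(u,sv)$ of $V\oplus sV$ has the same degree as its first component $u$, and that $\widetilde{T}$ composed with the componentwise action of $\rho\oplus\rho^s$ and the twisting maps reduces everything to the defining identities for $T$. The only cosmetic difference is that you collapse the right-hand side of \eqref{cond-O-oper-BiH-Jor2} down to the $T$-identity, whereas the paper expands $T(u)\cdot T(v)$ upward into the direct-sum form; the content is identical.
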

\begin{proof}
It's esay to see that $|\widetilde{T}|=|T|$. For any $u\in V$ and $sv\in sV$, by using \eqref{cond-O-oper-BiH-Jor1} we have
\begin{align*}
\widetilde{T}(\alpha_V\oplus\alpha_V^s)(u,sv)&=\widetilde{T}(\alpha_V(u),\alpha_V^s(sv))=\widetilde{T}(\alpha_V(u),s(\alpha_V(sv)))\\&=T(\alpha(u))=\alpha T(u)\\&=\alpha\widetilde{T}(u,sv).
\end{align*}
Then $\widetilde{T}(\alpha_V\oplus\alpha_V^s)=\alpha\widetilde{T}$. Similarly we heve $\widetilde{T}(\beta_V\oplus\beta_V^s)=\beta\widetilde{T}$.\\

Let $u,v\in v$ and $su',sv'\in sV$, we have
\begin{align*}
 \widetilde{T}(u,su')\cdot\widetilde{T}(v,sv')&=T(u)\cdot T(v)\\&= T\big((-1)^{|T|(|T|+|u|)}\rho(T(u))v+(-1)^{|u|(|v|+|T|)}\rho(T(\alpha_V^{-1}\beta_V(v)))\alpha_V\beta_V^{-1}(u)\big)\\&=\widetilde{T}\Big((-1)^{|T|(|T|+|u|)}\rho(T(u))v+(-1)^{|u|(|v|+|T|)}\rho(T(\alpha_V^{-1}\beta_V(v)))\alpha_V\beta_V^{-1}(u),\\&\quad\quad\quad(-1)^{|T|(|T|+|u|)}s(\rho(T(u))v)+(-1)^{|u|(|v|+|T|)}s(\rho(T(\alpha_V^{-1}\beta_V(v)))\alpha_V\beta_V^{-1}(u))\Big)\\&=\widetilde{T}\Big((-1)^{|T|(|T|+|u|)}\rho(T(u))v+(-1)^{|u|(|v|+|T|)}\rho(T(\alpha_V^{-1}\beta_V(v)))\alpha_V\beta_V^{-1}(u),\\&\quad\quad\quad(-1)^{|T|(|T|+|u|)}\rho^s(T(u))sv+(-1)^{|u|(|v|+|T|)}\rho^s(T(\alpha_V^{-1}\beta_V(v))\alpha_V^s(\beta_V^{-1})^s(su))\Big)\\&= \widetilde{T}\Big((-1)^{|T|(|T|+|u|)}(\rho\oplus\rho^s)(T(u))(v,sv)\\&\quad\quad+(-1)^{|T|(|T|+|u|)}(\rho\oplus\rho^s)(T(\alpha_V^{-1}\beta_V(v))(\alpha_V\beta_V^{-1}(u),s(\alpha_V\beta_V^{-1}(u)))\Big)\\&= \widetilde{T}\Big((-1)^{|T|(|T|+|u|)}(\rho\oplus\rho^s)(\widetilde{T}(u,su))(v,sv)\\&\quad\quad+(-1)^{|T|(|T|+|u|)}(\rho\oplus\rho^s)(\widetilde{T}(\alpha_V^{-1}\beta_V(v),s(\alpha_V^{-1}\beta_V(v)))(\alpha_V\beta_V^{-1}(u),s(\alpha_V\beta_V^{-1}(u)))\Big)\\&= \widetilde{T}\Big((-1)^{|\widetilde{T}|(|\widetilde{T}|+|u|)}(\rho\oplus\rho^s)(\widetilde{T}(u,su))(v,sv)\\&\quad\quad+(-1)^{|\widetilde{T}|(|\widetilde{T}|+|u|)}(\rho\oplus\rho^s)\big(\widetilde{T}((\alpha_V\oplus\alpha_V^s)^{-1}(\beta_V\oplus\beta_V^s)(v,sv))\big)(\alpha_V\oplus\alpha_V^s)(\beta_V\oplus\beta_V^s)^{-1}(u,su)\Big).
\end{align*}
Then $\widetilde{T}\in\mathcal{O}_{|T|}(\mathfrak J;V,\rho\oplus\rho^s,\alpha_V\oplus\alpha_V^s,\beta_V\oplus\beta_V^s)$
\end{proof}
\begin{cor}
Let $(V,\rho,\alpha_V,\beta_V)$ be a representation of a BiHom-Jordan superalgebra $(\mathfrak J,\cdot,\alpha,\beta)$ and $T\in\mathcal{O}_{|T|}(\mathfrak J;V,\rho,\alpha_V,\beta_V)$ Then, we have
$$\widetilde{T^s}=(\widetilde{T})^s.$$
\end{cor}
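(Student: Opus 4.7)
The plan is to unpack the definitions on both sides and compare the resulting formulas, observing that the suspension operator distributes over direct sums so that $s(V\oplus sV)$ is naturally identified with $sV \oplus s(sV)$ via $s(u,sv) \leftrightarrow (su, s(sv))$.

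First I would compute the left-hand side $\widetilde{T^s}$. Since $T \in \mathcal{O}_{|T|}(\mathfrak J;V,\rho,\alpha_V,\beta_V)$, Theorem \ref{one-one-corr-O-op} tells us that $T^s \in \mathcal{O}_{|T|+\overline{1}}(\mathfrak J;sV,\rho^s,\alpha_V^s,\beta_V^s)$, defined by $T^s(su) = T(u)$ for all $su \in sV$. Now applying the construction in Proposition \ref{O-op-T-wid-T} to $T^s$ yields the homogeneous linear map $\widetilde{T^s}:sV\oplus s(sV)\to \mathfrak J$ given by
\[
\widetilde{T^s}(su, s(sv)) = T^s(su) = T(u), \quad \forall\, su\in sV,\; s(sv)\in s(sV).
\]

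Next I would compute the right-hand side $(\widetilde{T})^s$. By Proposition \ref{O-op-T-wid-T} applied to $T$, we have $\widetilde{T}\in\mathcal{O}_{|T|}(\mathfrak J;V\oplus sV,\rho\oplus\rho^s,\alpha_V\oplus\alpha_V^s,\beta_V\oplus\beta_V^s)$ with $\widetilde{T}(u, sv) = T(u)$. Applying the parity reverse construction \eqref{homog-Ts-From-T} then gives $(\widetilde{T})^s: s(V\oplus sV)\to\mathfrak J$, which under the natural identification $s(V\oplus sV) \cong sV\oplus s(sV)$ satisfies
\[
(\widetilde{T})^s(s(u, sv)) = \widetilde{T}(u, sv) = T(u).
\]

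Comparing these two formulas on the generic element $(su, s(sv))$ shows that both maps produce $T(u)$, so they coincide as homogeneous linear maps of degree $|T|+\overline{1}$ from $sV\oplus s(sV)$ to $\mathfrak J$. There is no real obstacle here: the entire content is bookkeeping about how the suspension $s$ interacts with direct sums and with the two constructions $T\mapsto T^s$ and $T\mapsto \widetilde{T}$. The only point requiring mild care is to confirm that the parity of both sides is $|T|+\overline{1}$, which follows from Theorem \ref{equiv-T-Ts} and from the fact that $|\widetilde{T}|=|T|$ as noted in the proof of Proposition \ref{O-op-T-wid-T}.
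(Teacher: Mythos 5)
Your proposal is correct and follows essentially the same route as the paper: both sides are unpacked via Proposition \ref{O-op-T-wid-T} and the suspension construction \eqref{homog-Ts-From-T}, and both evaluate to $T(u)$ on a generic element, the only cosmetic difference being that the paper identifies $s(sV)$ with $V$ and writes the common domain as $sV\oplus V$ while you keep it as $sV\oplus s(sV)$.
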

\begin{proof}
By using Theorem \ref{one-one-corr-O-op} and Proposition \ref{O-op-T-wid-T}, we conclude that $\widetilde{T^s},(\widetilde{T})^s\in\mathcal{O}_{|T|+\overline{1}}(\mathfrak J;sV,\rho^s\oplus\rho,\alpha_V^s\oplus\alpha_V,\beta_V^s\oplus\beta_V)$ and for all $su\in sV,v\in V$, we have
$$\widetilde{T^s}(su,v)=T^s(su)=T(u)=\widetilde{T}(u,sv)=(\widetilde{T})^s(su,v),$$
wich gives the Proof.
\end{proof}
\begin{cor}
Let $(V,\rho,\alpha_V,\beta_V)$ be a representation of a BiHom-Jordan superalgebra $(\mathfrak J,\cdot,\alpha,\beta)$ then, we have $\mathcal{O}_{|T|+\overline{1}}(\mathfrak J;V,\rho,\alpha_V,\beta_V)=\{\hat{T}/\hat{T}=T^s\Phi,\;T\in\mathcal{O}_{|T|}(\mathfrak J;V,\rho,\alpha_V,\beta_V)\}$, where $T^s$ is obtained by $T$ via \eqref{homog-Ts-From-T} and  $\Phi:V\to sV$ is an isomorphism between $(V,\rho,\alpha_V,\beta_V)$ and $(sV,\rho^s,\alpha_V^s,\beta_V^s)$.   
\end{cor}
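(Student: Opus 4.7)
The plan is to deduce this corollary directly from Theorem \ref{one-one-corr-O-op} (equivalently, Theorem \ref{equiv-T-Ts}) combined with the hypothesis that $\Phi:(V,\rho,\alpha_V,\beta_V)\to(sV,\rho^s,\alpha_V^s,\beta_V^s)$ is a representation isomorphism. The central observation is that precomposing an $\mathcal{O}$-operator with a representation isomorphism transports it to an $\mathcal{O}$-operator associated to the source representation, so $T^s\Phi$ automatically inherits the $\mathcal{O}$-operator property from $T^s$.

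For the inclusion $\supseteq$, I would start with $T\in\mathcal{O}_{|T|}(\mathfrak J;V,\rho,\alpha_V,\beta_V)$ and invoke Theorem \ref{equiv-T-Ts} to obtain $T^s\in\mathcal{O}_{|T|+\overline{1}}(\mathfrak J;sV,\rho^s,\alpha_V^s,\beta_V^s)$. Setting $\hat T:=T^s\Phi$, the parity is $|\hat T|=|T^s|=|T|+\overline{1}$ since $\Phi$ is even, and the compatibilities $\alpha\hat T=\hat T\alpha_V$ and $\beta\hat T=\hat T\beta_V$ follow from $T^s\alpha_V^s=\alpha T^s$, $T^s\beta_V^s=\beta T^s$ together with $\Phi\alpha_V=\alpha_V^s\Phi$, $\Phi\beta_V=\beta_V^s\Phi$. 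To verify \eqref{cond-O-oper-BiH-Jor2} for $\hat T$, I would evaluate $\hat T(u)\cdot\hat T(v)=T^s(\Phi u)\cdot T^s(\Phi v)$, apply the $\mathcal{O}$-operator identity for $T^s$ on $\Phi u,\Phi v\in sV$, and then use the intertwining relations $\rho^s(x)\Phi=\Phi\rho(x)$, $\Phi\alpha_V^{-1}=(\alpha_V^s)^{-1}\Phi$, $\Phi\beta_V=\beta_V^s\Phi$ (noting $|\Phi u|=|u|$) to factor $\Phi$ back through the argument of $T^s$; what remains is exactly condition \eqref{cond-O-oper-BiH-Jor2} for $\hat T$ viewed in $(V,\rho,\alpha_V,\beta_V)$.

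For the reverse inclusion, given any $\hat T\in\mathcal{O}_{|T|+\overline{1}}(\mathfrak J;V,\rho,\alpha_V,\beta_V)$ I would define $T^{\ast}:=\hat T\circ\Phi^{-1}:sV\to\mathfrak J$. The symmetric argument, using $\Phi^{-1}$ as a representation isomorphism $(sV,\rho^s,\alpha_V^s,\beta_V^s)\to(V,\rho,\alpha_V,\beta_V)$, yields $T^{\ast}\in\mathcal{O}_{|T|+\overline{1}}(\mathfrak J;sV,\rho^s,\alpha_V^s,\beta_V^s)$. Theorem \ref{one-one-corr-O-op} then provides a unique $T\in\mathcal{O}_{|T|}(\mathfrak J;V,\rho,\alpha_V,\beta_V)$ with $T^s=T^{\ast}$; evaluating at $\Phi u$ for any $u\in V$ gives $T^s(\Phi u)=T^{\ast}(\Phi u)=\hat T\Phi^{-1}\Phi(u)=\hat T(u)$, so $\hat T=T^s\Phi$ as required.

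The main obstacle is the sign and intertwining bookkeeping in the first inclusion: one must carefully commute $\Phi$ past each occurrence of $\rho^s$, $\alpha_V^s$, and $(\beta_V^s)^{-1}$ inside the $\mathcal{O}$-operator identity for $T^s$ without disturbing the Koszul signs, which all goes through because $\Phi$ is even and intertwines the structure maps. Beyond this verification, the result is a formal consequence of the $\mathcal{O}$-operator duality already established in Theorem \ref{one-one-corr-O-op}.
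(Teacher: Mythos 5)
Your proof is correct and is exactly the argument the paper intends: the paper states this corollary with no proof at all (the ``proof'' attached to the following corollary is just a circular citation), treating it as an immediate consequence of Theorem \ref{one-one-corr-O-op} together with transport of the $\mathcal{O}$-operator condition along the representation isomorphism $\Phi$, which is precisely what you carry out, including the correct observation that $\Phi$ being even and intertwining $\rho,\alpha_V,\beta_V$ with $\rho^s,\alpha_V^s,\beta_V^s$ lets it pass through condition \eqref{cond-O-oper-BiH-Jor2} without disturbing any Koszul signs. The only caveat is that the statement implicitly assumes the representation is self-reversing (otherwise no such $\Phi$ exists), but that is a defect of the paper's formulation rather than of your argument.
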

\begin{cor}\label{Ts-By-T-via-isom}
 Let $(V,\rho,\alpha_V,\beta_V)$ be a self-reversing representation of a BiHom-Jordan superalgebra $(\mathfrak J,\cdot,\alpha,\beta)$ and $T:V\to\mathfrak J$ be a homogeneous linear map. Suppose that $\Phi:V\to sV$ is an isomorphism between
the representations $(V,\rho,\alpha_V,\beta_V)$ and $(sV,\rho^s,\alpha_V^s,\beta_V^s)$. Then the following assertions are equivalent. 
\begin{enumerate}
\item $T\in\mathcal{O}_{|T|}(\mathfrak J;V,\rho,\alpha_V,\beta_V)$;
\item $\hat{T}=T^s\Phi\in\mathcal{O}_{|T|+\overline{1}}(\mathfrak J;V,\rho,\alpha_V,\beta_V)$.
\end{enumerate}
\end{cor}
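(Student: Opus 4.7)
The plan is to factor the equivalence through two steps: first apply Theorem~\ref{equiv-T-Ts} to move the $\mathcal{O}$-operator question from $V$ to $sV$, then use the isomorphism $\Phi$ to pull everything back to $V$. More precisely, Theorem~\ref{equiv-T-Ts} already gives
\[
T\in\mathcal{O}_{|T|}(\mathfrak J;V,\rho,\alpha_V,\beta_V)\quad\Longleftrightarrow\quad T^s\in\mathcal{O}_{|T|+\overline{1}}(\mathfrak J;sV,\rho^s,\alpha_V^s,\beta_V^s),
\]
so it suffices to establish the auxiliary fact that, since $\Phi$ is an isomorphism of representations, precomposition with $\Phi$ yields a bijection between $\mathcal{O}_{|T|+\overline{1}}(\mathfrak J;sV,\rho^s,\alpha_V^s,\beta_V^s)$ and $\mathcal{O}_{|T|+\overline{1}}(\mathfrak J;V,\rho,\alpha_V,\beta_V)$, sending $T^s$ to $\hat{T}=T^s\Phi$.

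To prove this auxiliary fact I would take an arbitrary $S\in\mathcal{O}_{|T|+\overline{1}}(\mathfrak J;sV,\rho^s,\alpha_V^s,\beta_V^s)$ and verify that $S\Phi$ satisfies both \eqref{cond-O-oper-BiH-Jor1} and \eqref{cond-O-oper-BiH-Jor2}. For \eqref{cond-O-oper-BiH-Jor1}, the equivariance of $\Phi$ given by \eqref{isom-reps-1}, namely $\Phi\alpha_V=\alpha_V^s\Phi$ and $\Phi\beta_V=\beta_V^s\Phi$, combined with $S\alpha_V^s=\alpha S$ and $S\beta_V^s=\beta S$, immediately yields $(S\Phi)\alpha_V=\alpha(S\Phi)$ and $(S\Phi)\beta_V=\beta(S\Phi)$. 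For \eqref{cond-O-oper-BiH-Jor2}, starting from $S\Phi(u)\cdot S\Phi(v)=S(\Phi u)\cdot S(\Phi v)$, one applies the $\mathcal{O}$-operator identity for $S$ with arguments $\Phi u,\Phi v\in sV$, then uses the intertwining relation \eqref{isom-reps-2}, i.e.\ $\Phi\rho(x)=\rho^s(x)\Phi$ for every $x\in\mathfrak J$, to move every occurrence of $\rho^s$ past $\Phi$, replacing it by $\rho$ acting on $V$. The evenness of $\Phi$ ensures that $|\Phi u|=|u|$, so the signs in the identity are unchanged when the arguments are replaced, and the factor $S\Phi$ can then be refactored as $(S\Phi)(\cdots)$, giving exactly the condition for $S\Phi$ to be an $\mathcal{O}$-operator associated to $(V,\rho,\alpha_V,\beta_V)$.

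The converse direction uses the same argument applied to $\Phi^{-1}:sV\to V$, which is still an isomorphism of representations, so that if $\hat{T}=T^s\Phi\in\mathcal{O}_{|T|+\overline{1}}(\mathfrak J;V,\rho,\alpha_V,\beta_V)$ then $T^s=\hat{T}\Phi^{-1}\in\mathcal{O}_{|T|+\overline{1}}(\mathfrak J;sV,\rho^s,\alpha_V^s,\beta_V^s)$, and Theorem~\ref{equiv-T-Ts} returns us to $T\in\mathcal{O}_{|T|}(\mathfrak J;V,\rho,\alpha_V,\beta_V)$. This step is purely formal once the forward direction is done.

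I do not expect any serious obstacle: the content is a two-stage transport of structure via Theorem~\ref{equiv-T-Ts} and a representation isomorphism. The only delicate point is the sign bookkeeping in \eqref{cond-O-oper-BiH-Jor2}, where one must check that $|T^s|=|\hat{T}|=|T|+\overline{1}$, that $|\Phi u|=|u|$ because $\Phi$ is even, and that the twist maps $\alpha_V^s,\beta_V^s$ commute with $\Phi$ so that $(\alpha_V^s)^{-1}\beta_V^s\Phi=\Phi\alpha_V^{-1}\beta_V$; once these are in place, the two $\mathcal{O}$-operator identities match term by term.
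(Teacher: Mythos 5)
Your proposal is correct and follows essentially the same route as the paper: Theorem~\ref{equiv-T-Ts} transports the $\mathcal{O}$-operator property from $T$ to $T^s$, and precomposition with the representation isomorphism $\Phi$ (using its evenness and the intertwining relations \eqref{isom-reps-1}--\eqref{isom-reps-2}) carries it back to $V$; the paper merely asserts this as a ``direct deduction'' while you supply the transport-of-structure verification explicitly.
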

\begin{proof}
It is a direct deduction from Theorem \ref{equiv-T-Ts} and Corollary \ref{Ts-By-T-via-isom}. \end{proof}


\end{document}